 \crefname{subsection}{Subsection}{Subsections}
\theoremstyle{plain}
\newtheorem{theorem}{Theorem}[section]
\newtheorem{lemma}[theorem]{Lemma}
\newtheorem{prop}[theorem]{Proposition}
\newtheorem{cor}[theorem]{Corollary}
\newtheorem{setting}[theorem]{Setting}
\theoremstyle{remark}
\theoremstyle{definition}
\DeclareMathAlphabet{\mathpzc}{OT1}{pzc}{m}{it}
\DeclareMathAlphabet{\mathscr}{LS1}{stixscr}{m}{n}
\newcommand{\E}{\mathbb{E}}
\renewcommand{\P}{\mathbb{P}}
\newcommand{\R}{\mathbb{R}}
\newcommand{\N}{\mathbb{N}}
\newcommand{\w}[1]{\mathfrak{w}^{#1}}
\renewcommand{\b}[1]{\mathfrak{b}^{#1}}
\renewcommand{\v}[1]{\mathfrak{v}^{#1}}
\renewcommand{\c}[1]{\mathfrak{c}^{#1}}
\newcommand{\smallsum}{\textstyle\sum}
\newcommand{\cF}{\mathcal{F}}
\newcommand{\cG}{\mathcal{G}}
\newcommand{\cL}{\mathcal{L}}
\newcommand{\cV}{\mathcal{V}}
\newcommand{\scrN}{\mathscr{N}}
\newcommand{\fb}{\mathfrak{b}}
\newcommand{\fc}{\mathfrak{c}}
\newcommand{\fu}{\mathfrak{u}}
\newcommand{\fv}{\mathfrak{v}}
\newcommand{\fw}{\mathfrak{w}}
\renewcommand{\emptyset}{\varnothing}
\DeclarePairedDelimiter{\norm}{\lVert}{\rVert}
\DeclarePairedDelimiter{\abs}{\lvert}{\rvert}
\DeclarePairedDelimiter{\rbr}{(}{)}
\DeclarePairedDelimiter{\br}{[}{]}
\DeclarePairedDelimiter{\cu}{\{}{\}}
\renewcommand{\d}{\, \mathrm{d}}
\newcommand{\indicator}[1]{\mathbbm{1}_{\smash{#1}}}
\newcommand{\realization}[1] {\mathscr{N} ^{ #1  }}
\newcommand{\realapprox}[2]{\mathscr{N} ^{#1}_ {#2}}
\newcommand{\width}{H}
\NewDocumentCommand{\nobs}{}{
  \bool_if:nTF { \g_noteobserve } {
    \bool_gset_false:N \g_noteobserve 
    note~
  } {
    \bool_gset_true:N \g_noteobserve 
    observe~
  }
}
\NewDocumentCommand{\Nobs}{}{
  \bool_if:nTF { \g_noteobserve } {
    \bool_gset_false:N \g_noteobserve 
    Note~
  } {
    \bool_gset_true:N \g_noteobserve 
    Observe~
  }
}
\NewDocumentCommand{\cfadd}{ m }
{
  \seq_if_in:NnF \g_cflist_loaded { #1 } {
    \seq_if_in:NnF \g_cflist_pending { #1 } {
      \seq_gput_right:Nn \g_cflist_pending { #1 }
    }
  }
}
\NewDocumentCommand{\cfconsiderloaded}{ m }{
  \seq_gput_right:Nn \g_cflist_loaded {#1}
}
\NewDocumentCommand{\cfremove}{ m }
{
  \seq_gremove_all:Nn \g_cflist_pending { #1 }
}
\NewDocumentCommand{\cfload}{ o }
{
  \seq_if_empty:NTF \g_cflist_pending {\unskip} {
    (cf.\ \cref{\seq_use:Nn \g_cflist_pending {,}})\IfValueTF{#1}{#1~}{\unskip}
    \seq_gconcat:NNN \g_cflist_loaded \g_cflist_loaded \g_cflist_pending
    \seq_gclear:N \g_cflist_pending
  }
}
\NewDocumentCommand{\cfclear} {} {
  \seq_gclear:N \g_cflist_loaded
  \seq_gclear:N \g_cflist_pending
}
\NewDocumentCommand{\cfout}{ o }
{
  \seq_if_empty:NTF \g_cflist_pending {\unskip} {
    (cf.\ \cref{\seq_use:Nn \g_cflist_pending {,}})\IfValueTF{#1}{#1~}{\unskip}
    \seq_gclear:N \g_cflist_pending
  }
}
\NewDocumentCommand{\ifnocf} { m } {
  \seq_if_empty:NT \g_cflist_pending { #1 }
}
\title{A proof of convergence for gradient descent in the training\\ of artificial neural networks for constant target functions}
\author{
Patrick Cheridito$^1$,
Arnulf Jentzen$^{2, 3}$,\\
Adrian Riekert$^4$,
and
Florian Rossmannek$^5$
\bigskip
\\
\small{$^1$ Department of Mathematics, ETH Zurich, }
\vspace{-0.1cm}\\
\small{Zurich, Switzerland, e-mail: \texttt{patrick.cheridito}\textcircled{\texttt{a}}\texttt{math.ethz.ch}}
\smallskip
\\
\small{$^2$ Faculty of Mathematics and Computer Science, University of M{\"u}nster,}
\vspace{-0.1cm}\\
\small{M{\"u}nster, Germany, e-mail: \texttt{ajentzen}\textcircled{\texttt{a}}\texttt{uni-muenster.de}}
\smallskip
\\
\small{$^3$ School of Data Science and Shenzhen Research Institute of Big Data,}
\vspace{-0.1cm}\\
\small{The Chinese University of Hong Kong, Shenzhen, China, e-mail: \texttt{ajentzen}\textcircled{\texttt{a}}\texttt{cuhk.edu.cn}}
\smallskip
\\
\small{$^4$ Faculty of Mathematics and Computer Science, University of M{\"u}nster,}
\vspace{-0.1cm}\\
\small{M{\"u}nster, Germany, e-mail: \texttt{ariekert}\textcircled{\texttt{a}}\texttt{uni-muenster.de}}
\smallskip
\\
\small{$^5$ Department of Mathematics, ETH Zurich, }
\vspace{-0.1cm}\\
\small{Zurich, Switzerland, e-mail: \texttt{florian.rossmannek}\textcircled{\texttt{a}}\texttt{math.ethz.ch}}
}
\date{\today}
\begin{document}

\maketitle

\begin{abstract}
   Gradient descent optimization algorithms are the standard ingredients that are used to train artificial neural networks (ANNs). Even though a huge number of numerical simulations indicate that gradient descent optimization methods do indeed convergence in the training of ANNs, until today there is no rigorous theoretical analysis which proves (or disproves) this conjecture. In particular, even in the case of the most basic variant of gradient descent optimization algorithms, the plain vanilla gradient descent method, it remains an open problem to prove or disprove the conjecture that gradient descent converges in the training of ANNs. In this article we solve this problem in the special situation where the target function under consideration is a constant function. More specifically, in the case of constant target functions we prove in the training of rectified fully-connected feedforward ANNs with one-hidden layer that the risk function of the gradient descent method does indeed converge to zero. Our mathematical analysis strongly exploits the property that the rectifier function is the activation function used in the considered ANNs. A key contribution of this work is to explicitly specify a Lyapunov function for the gradient flow system of the ANN parameters. This Lyapunov function is the central tool in our convergence proof of the gradient descent method.
\end{abstract}

\newpage

\tableofcontents

\newpage

\section{Introduction}

Gradient descent (GD) optimization schemes are the standard methods for the training of artificial neural networks (ANNs). Although a large number of numerical simulations hint that GD optimization methods do converge in the training of ANNs, in general there is no mathematical analysis in the scientific literature which proves (or disproves) the conjecture that GD optimization methods converge in the training of ANNs. 

Even though the convergence of GD optimization methods is still an open problem of research, there are several promising approaches in the scientific literature which attack this problem. In particular, we refer, e.g., to \cite{Bach2017, BachMoulines2013, BachMoulines2011} and the references mentioned therein for convergence results for GD optimization methods in the training of convex neural networks, we refer, e.g., to \cite{AllenzhuLiLiang2019, AllenzhuLiSong2019, ChizatBach2018, DuLeeLiWangZhai2019, DuZhaiPoczosSingh2018arXiv, EMaWu2020, JacotGabrielHongler2020, LiLiang2019, SankararamanDeXuHuangGoldstein2020, EChaoWu2018, ZouCaoZhouGu2019} and the references mentioned therein for convergence results for GD optimization methods for the training of ANNs in the so-called overparametrized regime, we refer, e.g., to \cite{AkyildizSabanis2021, FehrmanGessJentzen2020, LeiHuLiTang2020, LovasSabanis2020} and the references mentioned therein for abstract convergence results for GD optimization methods which do not assume convexity of the considered objective functions, we refer, e.g., to \cite{Hanin2018, HaninRolnick2018, LuShinSuKarniadakis2020, ShinKarniadakis2020} and the references mentioned therein for results on the effect of initialization in the training of ANNs, and we refer, e.g., to \cite{CheriditoJentzenRossmannek2020, JentzenvonWurstemberger2020, LuShinSuKarniadakis2020} and the references mentioned therein for lower bounds and divergence results for GD optimization methods. For more detailed overviews and further references on GD optimization schemes we also refer, e.g., to \cite{Ruder2017overview}, \cite[Section 1]{JentzenKuckuckNeufeldVonWurstemberger2021}, and \cite[Section 1.1]{FehrmanGessJentzen2020}.

A key idea of this work is to attack this challenging open problem of convergence of GD optimization methods in the training of ANNs in the situation of very special target functions: Our program is to first establish convergence in the case of constant target functions, thereafter, to prove convergence in the case of affine linear target functions, thereafter, to consider suitable continuous piecewise affine linear target functions, and, finally, to pass to the limit of general continuous target functions. In particular, the central contribution of this work is to solve this problem in the case of constant target functions. More formally, the main result of this article (see \cref{theo:gd:loss} in \cref{subsection:theorem:gd} below) proves that the risk function of the standard GD process converges to zero in the training of fully-connected rectified feedforward ANNs with one input, one output, and one hidden layer in the special situation where the target function under consideration is a constant function and where the input data is continuous uniformly distributed. In the next result, \cref{theo:intro}, we illustrate the findings of this work in more detail within this introductory section. Below \cref{theo:intro} we add several 
explanatory comments regarding the statement of and the mathematical objects in 
\cref{theo:intro} and we also highlight the key ideas of the proof of \cref{theo:intro}.

\begin{theorem} \label{theo:intro}
Let $\width \in \N$, $\alpha \in \R$,
$\gamma \in (0 , \infty ) $,
let $\norm{\cdot} \colon \R^{3 \width + 1 } \to [0, \infty)$ satisfy for all $\phi = ( \phi_1 , \ldots, \phi_{ 3 \width + 1 } ) \in \R^{3 \width + 1}$ that $\norm{ \phi } = \br{ \sum_{i=1}^{3 \width + 1 } \abs*{ \phi_i } ^2 } ^{ 1 / 2 }$,
let $\sigma_r \colon \R \to \R$, $r \in[1 , \infty]$, satisfy for all $r \in [1 , \infty)$, $x \in \R$ that $\sigma_r ( x ) = r^{-1} \ln \rbr{  1 + r^{-1} e^{r x }  }$ and $\sigma_\infty ( x ) = \max \{ x , 0 \}$,
let $\scrN_ r = (\realapprox{\phi}{r})_{\phi \in \R^{3 \width + 1 } } \colon \R^{3 \width + 1} \to C(\R , \R)$, $r \in [1 , \infty]$, and $\cL _ r \colon \R^{3 \width + 1 } \to \R$, $r \in [1 , \infty]$,
satisfy for all $r \in [1 , \infty]$, $\phi = ( \phi_1 , \ldots, \phi_{ 3 \width + 1 } ) \in \R^{3 \width + 1}$, $x \in \R$ that $\realapprox{\phi}{r} (x) = \phi_{3 \width + 1 } + \sum_{j=1}^\width \phi_{2 \width + j} \sigma_r (\phi_j x + \phi_{\width + j} )$
and $\cL_r(\phi) = \int_0^1 (\realapprox{\phi}{r} (y) - \alpha )^2 \d y$,
let $\cG = ( \cG_1,  \ldots, \cG_{3 \width + 1} ) \colon \R^{3 \width + 1} \to \R^{3 \width + 1}$ satisfy for all
$\phi \in  \{ \varphi \in \R^{3 \width + 1} \colon   ((\nabla \cL_r ) ( \varphi ) )_{r \in \N } \text{ is convergent}  \}$ that $\cG ( \phi ) = \lim_{r \to \infty} (\nabla \cL_r ) ( \phi )$,
and let $\Theta = (\Theta_n)_{n \in \N_0}  \colon \N_0 \to \R^{3 \width + 1}$ satisfy for all $n \in \N_0$ that $\Theta_{n+1} = \Theta_n - \gamma \cG ( \Theta_n)$ and  $\gamma \leq (4 \norm{\Theta_0}  + 6 \abs*{ \alpha } + 2 )^{-2}$. Then 
\begin{enumerate} [(i)]
    \item \label{theo:intro:item1} it holds for all $\phi \in \{ \varphi \in \R^{3 \width + 1 } \colon \cL_\infty \text{ is differentiable at } \varphi \}$ that $(\nabla \cL_\infty) ( \phi ) = \cG ( \phi)$,
    \item \label{theo:intro:item2} it holds that $\sup_{n \in \N_0} \norm{\Theta_n} < \infty$, and
    \item \label{theo:intro:item3} it holds that $\limsup_{n \to \infty} \cL_\infty (\Theta_n) = 0$.
\end{enumerate}
\end{theorem}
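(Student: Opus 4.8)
The plan is to build the proof around one explicit Lyapunov function for the discrete parameter dynamics together with an Euler-type identity for $\cG$ that exploits the exact positive homogeneity of the rectifier. First I would settle \cref{theo:intro:item1} and, along the way, obtain usable formulas for $\cG$. Each $\sigma_r$ with $r\in[1,\infty)$ is continuously differentiable with $0\le\sigma_r'\le1$, so every $\cL_r$ is $C^1$ and differentiation under the integral sign applies. Since $\sigma_r\to\sigma_\infty$ and $\sigma_r'\to\indicator{(0,\infty)}$ pointwise on $\R$ as $r\to\infty$, with the uniform bound $\sigma_r(t)\le\ln 2+\sigma_\infty(t)$, dominated convergence shows that $((\nabla\cL_r)(\phi))_{r\in\N}$ converges for \emph{every} $\phi\in\R^{3\width+1}$ (so that $\cG$ has full domain and the recursion for $\Theta$ is well posed) and that, writing $\mathcal{E}_\phi(y)=\realapprox{\phi}{\infty}(y)-\alpha$, for all $j\in\{1,\dots,\width\}$
\begin{equation}
\cG_{3\width+1}(\phi)=2\!\int_0^1\!\mathcal{E}_\phi(y)\d y,\qquad\cG_{2\width+j}(\phi)=2\!\int_0^1\!\mathcal{E}_\phi(y)\,\sigma_\infty(\phi_j y+\phi_{\width+j})\d y,
\end{equation}
\begin{equation}
\cG_{j}(\phi)=2\phi_{2\width+j}\!\int_0^1\!\mathcal{E}_\phi(y)\,\indicator{(0,\infty)}(\phi_j y+\phi_{\width+j})\,y\d y,\qquad\cG_{\width+j}(\phi)=2\phi_{2\width+j}\!\int_0^1\!\mathcal{E}_\phi(y)\,\indicator{(0,\infty)}(\phi_j y+\phi_{\width+j})\d y.
\end{equation}
Running the same dominated convergence argument on the difference quotients of $\cL_\infty$ — for each fixed $y$ the map $\phi\mapsto(\realapprox{\phi}{\infty}(y)-\alpha)^2$ is differentiable off a finite union of hyperplanes and is locally Lipschitz in $\phi$ uniformly in $y\in[0,1]$ — shows that whenever a partial derivative of $\cL_\infty$ exists at $\phi$ it must coincide with the corresponding component above; hence $(\nabla\cL_\infty)(\phi)=\cG(\phi)$ at every differentiability point of $\cL_\infty$, which is \cref{theo:intro:item1}.

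Next I would establish the key identity
\begin{equation}\label{eq:euler}
\langle\phi,\cG(\phi)\rangle=4\,\cL_\infty(\phi)+(2\alpha-\phi_{3\width+1})\,\cG_{3\width+1}(\phi)\qquad\text{for all }\phi\in\R^{3\width+1},
\end{equation}
where $\langle\cdot,\cdot\rangle$ is the Euclidean scalar product. Substituting the formulas for $\cG$ and using, for each hidden neuron $j$, the elementary pointwise relation $\sigma_\infty(t)+t\,\indicator{(0,\infty)}(t)=2\sigma_\infty(t)$ with $t=\phi_j y+\phi_{\width+j}$, the left-hand side of \eqref{eq:euler} equals $2\int_0^1\mathcal{E}_\phi(y)\big(2\realapprox{\phi}{\infty}(y)-\phi_{3\width+1}\big)\d y=2\int_0^1\mathcal{E}_\phi(y)\big(2\mathcal{E}_\phi(y)+2\alpha-\phi_{3\width+1}\big)\d y$, which is the right-hand side. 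I would also record the crude a priori bound $\norm{\cG(\phi)}^2\le4\,\cL_\infty(\phi)\,(1+2\norm{\phi}^2)$, obtained by applying the Cauchy--Schwarz inequality in $y$ to each component of $\cG$ and bounding $\sigma_\infty(\phi_j y+\phi_{\width+j})\le\abs{\phi_j}+\abs{\phi_{\width+j}}$ and $\indicator{(0,\infty)}(\,\cdot\,)\le1$ for $y\in[0,1]$.

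Then I would run the Lyapunov argument with $V\colon\R^{3\width+1}\to[0,\infty)$ defined by $V(\phi)=\norm{\phi}^2+(2\alpha-\phi_{3\width+1})^2$. Expanding $V(\Theta_{n+1})$ from the update rule $\Theta_{n+1}=\Theta_n-\gamma\cG(\Theta_n)$ and inserting \eqref{eq:euler}, the first-order terms cancel and, together with the bound on $\norm{\cG}$,
\begin{equation}
V(\Theta_{n+1})=V(\Theta_n)-8\gamma\,\cL_\infty(\Theta_n)+\gamma^2\big(\norm{\cG(\Theta_n)}^2+\abs{\cG_{3\width+1}(\Theta_n)}^2\big)\le V(\Theta_n)-8\gamma\,\cL_\infty(\Theta_n)\big(1-\gamma(1+2\norm{\Theta_n}^2)\big).
\end{equation}
An induction on $n$ with hypothesis $V(\Theta_n)\le V(\Theta_0)$ then closes: the hypothesis gives $\norm{\Theta_n}\le\sqrt{V(\Theta_0)}\le2\norm{\Theta_0}+2\abs{\alpha}$, and the assumption $\gamma\le(4\norm{\Theta_0}+6\abs{\alpha}+2)^{-2}$ guarantees $\gamma(1+2\norm{\Theta_n}^2)\le\tfrac12$, so $V(\Theta_{n+1})\le V(\Theta_n)-4\gamma\,\cL_\infty(\Theta_n)\le V(\Theta_0)$. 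In particular $\sup_{n\in\N_0}\norm{\Theta_n}\le\sqrt{V(\Theta_0)}<\infty$, which is \cref{theo:intro:item2}. Telescoping $4\gamma\,\cL_\infty(\Theta_n)\le V(\Theta_n)-V(\Theta_{n+1})$ over $n$ and using $V\ge0$ gives $4\gamma\sum_{n=0}^{\infty}\cL_\infty(\Theta_n)\le V(\Theta_0)<\infty$, hence $\cL_\infty(\Theta_n)\to0$, which implies \cref{theo:intro:item3}.

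The only genuinely non-routine step is finding the Lyapunov function $V$ and the identity \eqref{eq:euler}; everything there rests on the homogeneity relation $\sigma_\infty(t)+t\sigma_\infty'(t)=2\sigma_\infty(t)$ for the rectifier, which fails for the smooth surrogates $\sigma_r$ — these serve only to pin down the generalized gradient $\cG$ and to make \cref{theo:intro:item1} meaningful. Once $V$ and \eqref{eq:euler} are in hand, the dominated convergence arguments of the first step, the Cauchy--Schwarz estimate, and the induction bookkeeping are all mechanical.
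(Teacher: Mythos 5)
Your argument for items (ii) and (iii) is correct and is, in substance, the paper's own proof: your function $V(\phi)=\norm{\phi}^2+(2\alpha-\phi_{3\width+1})^2$ is exactly the Lyapunov function of \cref{setting:const}, and your Euler-type identity $\langle\phi,\cG(\phi)\rangle=4\cL_\infty(\phi)+(2\alpha-\phi_{3\width+1})\cG_{3\width+1}(\phi)$ is an algebraic rearrangement of the paper's key relation $\langle(\nabla V)(\phi),\cG(\phi)\rangle=8\cL_\infty(\phi)$ (\cref{prop:lyapunov:gradient}); the gradient bound $\norm{\cG(\phi)}^2\le 4(1+2\norm{\phi}^2)\cL_\infty(\phi)$ is \cref{lem:gradient:est}, and the induction plus telescoping reproduces \cref{lem:vthetan:decreasing,theo:gd:loss}. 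The one cosmetic difference is that you expand $V(\Theta_{n+1})$ exactly (exploiting that $V$ is quadratic), whereas the paper runs a fundamental-theorem-of-calculus estimate (\cref{lem:est:vtheta_n}); both yield the same one-step inequality, and your constants check out against the hypothesis $\gamma\le(4\norm{\Theta_0}+6\abs{\alpha}+2)^{-2}$.

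There is, however, a genuine gap in your proof of item (i), at the degenerate parameter points where $\phi_j=\phi_{\width+j}=0$ for some $j$. Your plan is to apply dominated convergence to the difference quotients of $\cL_\infty$, using that for almost every $y$ the integrand is partially differentiable at the point in question. But at such a $\phi$ the map $v\mapsto\sigma_\infty(v\,y+\phi_{\width+j})=\sigma_\infty(v\,y)$ fails to be differentiable at $v=0$ for \emph{every} $y\in(0,1]$, not merely for $y$ in a null set, so the pointwise limit of the difference quotient does not exist (the right- and left-sided limits are $2\phi_{2\width+j}\,y\,(\realapprox{\phi}{\infty}(y)-\alpha)$ and $0$, respectively) and the interchange argument breaks down; the same happens for the bias coordinate $\width+j$. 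The conclusion is still true, but it needs a separate argument: since $\realapprox{\varphi}{\infty}=\realapprox{\phi}{\infty}$ for all one-sided perturbations $(\phi_j,\phi_{\width+j})\to(\phi_j+h_1,\phi_{\width+j}+h_2)$ with $h_1,h_2\le 0$, differentiability of $\cL_\infty$ at $\phi$ forces both partial derivatives to vanish, and $\cG_j(\phi)=\cG_{\width+j}(\phi)=0$ because $I_j^\phi=\emptyset$. This is precisely what \cref{lem:loss:diff:degenerate} supplies in the paper; without it your case analysis for item (i) is incomplete.
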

Item \eqref{theo:intro:item1} in \cref{theo:intro} is a direct consequence of \cref{cor:loss:differentiable} below and items
\eqref{theo:intro:item2} and \eqref{theo:intro:item3} in \cref{theo:intro} are direct consequences of \cref{cor:gd:main} below. \cref{cor:gd:main}, in turn, follows from \cref{theo:gd:loss}, which is the main result of this article.

Let us next add a few comments regarding the mathematical objects appearing in \cref{theo:intro}. 
In \cref{theo:intro} we study the training of ANNs with one input, one output, and one hidden layer. The natural number $\width \in \N$ in \cref{theo:intro} specifies the number of neurons on the hidden layer (the dimension of the hidden layer) in the ANN. \cref{theo:intro} proves that the risk function of GD converges to zero in the special situation where the input data is continuous uniformly distributed and where the target function under consideration is a constant function. The real number $\alpha \in \R$ is precisely this constant with which the target function is assumed to coincide. The real number $\gamma \in (0,\infty)$ in \cref{theo:intro} specifies the learning rate of the GD method. 

In \cref{theo:intro} we consider fully-connected feedforward ANNs with $1$ neuron on the input layer, $\width $ neurons on the hidden layer, and $1$ neuron on the output layer. Therefore, the considered ANNs have precisely $2 \width$ weights, $\width + 1$ biases, and
$2 \width + \width + 1 = 3 \width + 1$ ANN parameters overall. The function $\norm{\cdot} \colon \R^{3 \width + 1 } \to \R$ in \cref{theo:intro} is nothing else but the standard norm on the space $\R^{ 3 \width + 1 }$ of ANN parameters. 

In \cref{theo:intro} we study the training of ANNs with the rectifier function
$\R \ni x \mapsto \sigma_{ \infty }( x ) = \max\{ x, 0 \} \in \R$ as the activation function. Since the rectifier function $\sigma_{ \infty } \colon \R \to \R$ 
in \cref{theo:intro} is not differentiable at 0, we have that the associated risk function also fails to be differentiable at some points in the ANN parameter space $\R^{ 3 \width + 1 }$. In view of this, one needs to carefully choose the values for the driving gradient field in the GD optimization method at the points in the ANN parameter space $\R^{ 3 \width + 1 }$ where the risk function is not differentiable. We accomplish this by approximating the rectifier function and the corresponding risk function through regularized versions of these functions. More formally, in \cref{prop:relu:approximation} in \cref{subsection:relu:approx} below we show that the functions $\sigma_r \colon \R \to \R$, $r \in [1,\infty]$, in \cref{theo:intro} satisfy that for all $x \in \R$, $y \in \R \backslash \{ 0 \}$ it holds that $\limsup_{ r \to \infty } | \sigma_r(x) - \sigma_{ \infty }(x) |$ = 0 
and $\limsup_{ r \to \infty } | ( \sigma_r )'(y) - ( \sigma_{ \infty } )'(y) |$ = 0. \Nobs that for all $r \in [1,\infty)$ it holds that $\sigma_r \in C^{ \infty } ( \R , \R)$. 

The functions $\scrN_r \colon \R^{3 \width + 1 }  \to C ( \R , \R)$, $r \in [1,\infty]$, in \cref{theo:intro} describe the realization functions of the considered ANNs. More formally, \nobs that for every $r \in [1,\infty]$ and every $\phi = ( \phi_1, \ldots , \phi_{ 3 \width + 1 } ) \in \R^{ 3 \width + 1 }$ we have that the 
function $\R \ni x \mapsto \realapprox{ \phi }{r}( x) \in \R$ is the realization function 
associated to the ANN with the activation function $\sigma_r \colon \R \to \R$ and the parameter vector $\phi = ( \phi_1, \ldots , \phi_{ 3 \width + 1 } )$. In particular, \nobs that for every 
ANN parameter vector $\phi \in \R^{ 3 \width + 1 }$ we have that 
$\R \ni x \mapsto \realapprox{ \phi }{\infty} ( x) \in \R$ is the realization function associated to the rectified ANN with the parameter vector $\phi$.

The process $\Theta = ( \Theta_n )_{ n \in \N_0 } \colon \N_0 \to \R^{3 \width + 1 }$ in \cref{theo:intro} is the GD process with constant learning rate $\gamma$. \Nobs that the learning rate $\gamma$ in \cref{theo:intro} is assumed to be sufficiently small in the sense that $\gamma \leq (4 \norm{\Theta_0}  + 6 \abs*{ \alpha } + 2 )^{-2}$. Under this assumption, \cref{theo:intro} reveals that the risk of the GD process $\cL_{ \infty }( \Theta_n )$, $n \in \N_0$, does indeed converge to zero as the number of GD steps $n$ increases to infinity.

Let us also add a few comments on the proof of \cref{theo:intro}. A key new observation of this article is the fact that in the situation of \cref{theo:intro} we have that the function
\begin{equation} \label{eq:intro:lyapunov}
 \R^{ 3 \width + 1 } \ni ( \phi_1, \ldots, \phi_{ 3 \width + 1 } ) \mapsto  \rbr[\big]{\smallsum_{i=1}^{3 \width + 1 } \abs{\phi_i}^2 } + ( \phi_{3 \width + 1 } - 2 \alpha ) ^2 \in \R
\end{equation}
is a Lyapunov function for the gradient flow system of the ANN parameters. 
We refer to item \eqref{prop:lyapunov:gradient:item3} in \cref{prop:lyapunov:gradient} in \cref{subsection:lyapunov} and 
\cref{lem:flow:lyapunov} in \cref{subsection:ito:lyapunov} for the proof of this statement. 
In addition, in \cref{lem:vthetan:decreasing} in \cref{subsection:gd:lyapunov} we
show that the function in \eqref{eq:intro:lyapunov} is also a Lyapunov function for the time-discrete GD processes if the learning rate is sufficiently small. We also would like to emphasize that the term $( \phi_{3 \width + 1 } - 2 \alpha ) ^2$ in \eqref{eq:intro:lyapunov} is essential for the function in \eqref{eq:intro:lyapunov} to serve as a Lyapunov function. In particular, we would like to point out that the function $\R^{3 \width + 1 } \ni \phi \mapsto \norm{\phi} ^2 \in \R$ fails to be a Lyapunov function for the gradient flow system of the ANN parameters.

The remainder of this article is structured as follows. In \cref{section:risk:regularity} we 
present the mathematical framework which we use to study the considered GD processes 
and we also establish several regularity properties for the considered risk functions 
and their gradients.
In \cref{section:gradientflow} we use the findings from \cref{section:risk:regularity} to establish that the risks of the considered time-continuous gradient flow processes converge to zero.
In \cref{section:gradientdescent} we prove that the risks of the considered time-discrete GD processes converge to zero.
The key ingredient in our convergence proofs for gradient flow and GD processes in Sections \ref{section:gradientflow} and \ref{section:gradientdescent} are suitable a priori estimates (which we achieve by means of the Lyapunov function in \eqref{eq:intro:lyapunov} above) for the gradient flow processes (see \cref{lem:flow:lyapunov} in \cref{subsection:ito:lyapunov}) and the GD processes (see \cref{lem:vthetan:decreasing} in \cref{subsection:gd:lyapunov}). In \cref{section:apriori:gen}
we derive -- to stimulate further research activities -- related a priori bounds in the case of general target functions.

\section{Regularity properties of the risk functions and their gradients}
\label{section:risk:regularity}

In \cref{section:risk:regularity} we present in \cref{setting:const} the mathematical framework which we use to study the considered GD processes and we also establish several regularity results for the considered risk functions and their gradients. Most notably, we establish in Propositions \ref{prop:lyapunov:norm} and \ref{prop:lyapunov:gradient} in \cref{subsection:lyapunov} below that the gradient flow system for the ANN parameters in \cref{setting:const} admits an appropriate Lyapunov function. In particular, in item \eqref{prop:lyapunov:gradient:item3} in \cref{prop:lyapunov:gradient} we prove that the function $V \colon \R^{3 \width + 1 } \to  \R$ in \cref{setting:const} serves as a Lyapunov function.

We also note that the results in \cref{prop:relu:approximation} in \cref{subsection:relu:approx}, in \cref{lem:interchange} in \cref{subsection:risk:differentiable}, and in \cref{cor:interchange} in \cref{subsection:risk:differentiable} are all well-known in the literature and we include in this section detailed proofs for \cref{prop:relu:approximation}, \cref{lem:interchange}, and \cref{cor:interchange} only for completeness.

\subsection{Mathematical description of rectified artificial neural networks}

\begin{setting} \label{setting:const} 
Let $\width \in \N$, $\alpha \in \R$,
let $\fw  = (( \w{\phi} _ 1 , \ldots, \w{\phi} _ \width ))_{ \phi \in \R^{3 \width + 1}} \colon \R^{3 \width + 1} \to \R^{\width}$,
$\fb =  (( \b{\phi} _ 1 , \ldots, \b{\phi} _ \width ))_{ \phi \in \R^{3 \width + 1}} \colon \R^{3 \width + 1} \to \R^{\width}$,
$\fv = (( \v{\phi} _ 1 , \ldots, \v{\phi} _ \width ))_{ \phi \in \R^{3 \width + 1}} \colon \R^{3 \width + 1} \to \R^{\width}$, and
$\fc = (\c{\phi})_{\phi \in \R^{3 \width + 1 }} \colon \R^{3 \width + 1} \to \R$
 satisfy for all $\phi  = ( \phi_1 ,  \ldots, \phi_{3 \width + 1}) \in \R^{3 \width + 1}$, $j \in \{1, 2, \ldots, \width \}$ that $\w{\phi}_j = \phi_j$, $\b{\phi}_j = \phi_{\width + j}$, 
$\v{\phi}_j = \phi_{2\width + j}$, and $\c{\phi} = \phi_{3 \width + 1}$,
let $\sigma_r \colon \R \to \R$, $r \in[1 , \infty]$, satisfy for all $r \in [1 , \infty)$, $x \in \R$ that $\sigma_r ( x ) = r^{-1} \ln \rbr{  1 + r^{-1} e^{r x }  }$ and $\sigma_\infty ( x ) = \max \{ x , 0 \}$,
let $\scrN_ r = (\realapprox{\phi}{r})_{\phi \in \R^{3 \width + 1 } } \colon \R^{3 \width + 1} \to C(\R , \R)$, $r \in [1 , \infty]$, and $\cL _ r \colon \R^{3 \width + 1 } \to \R$, $r \in [1 , \infty]$,
satisfy for all $r \in [1 , \infty]$, $\phi \in \R^{3 \width + 1}$, $x \in \R$ that $\realapprox{\phi}{r} (x) = \c{\phi} + \sum_{j=1}^\width \v{\phi}_j \sigma_r (\w{\phi}_j x + \b{\phi}_j )$
and $\cL_r(\phi) = \int_0^1 (\realapprox{\phi}{r} (y) - \alpha )^2 \d y$,
let $\cG = ( \cG_1,  \ldots, \cG_{3 \width + 1} ) \colon \R^{3 \width + 1} \to \R^{3 \width + 1}$ satisfy for all
$\phi \in  \{ \varphi \in \R^{3 \width + 1} \colon   ((\nabla \cL_r ) ( \varphi ) )_{r \in \N } \text{ is convergent} \}$ that $\cG ( \phi ) = \lim_{r \to \infty} (\nabla \cL_r ) ( \phi )$,
let $\norm{ \cdot } \colon \rbr*{  \bigcup_{n \in \N} \R^n  } \to [0, \infty)$ and $\langle \cdot , \cdot \rangle \colon \rbr*{  \bigcup_{n \in \N} (\R^n \times \R^n )  } \to \R$ satisfy for all $n \in \N$, $x=(x_1, \ldots, x_n), y=(y_1, \ldots, y_n ) \in \R^n $ that $\norm{ x } = [ \sum_{i=1}^n \abs*{ x_i } ^2 ] ^{1/2}$ and $\langle x , y \rangle = \sum_{i=1}^n x_i y_i$,
and let $I_j^\phi \subseteq \R$, $\phi \in \R^{3 \width + 1 }$, $j \in \{1, 2, \ldots, \width \}$, and $V \colon \R^{3 \width + 1} \to \R$ satisfy for all 
$\phi \in \R^{3 \width +1}$, $j \in \{1, 2, \ldots, \width \}$ that $I_j^\phi = \{ x \in [0,1] \colon \w{\phi}_j x + \b{\phi}_j > 0 \}$ and
 $V(\phi) = \norm{ \phi } ^2 + ( \c{\phi} -  2 \alpha) ^2$ .
\end{setting}

\subsection{Smooth approximations of the rectifier function}
\label{subsection:relu:approx}

\begin{prop} \label{prop:relu:approximation} 
Let $\sigma_r \colon \R \to \R$, $r \in [1, \infty]$, satisfy for all $r \in [1 , \infty)$, $x \in \R$ that $\sigma_r ( x ) = r^{-1} \ln \rbr{  1 + r^{-1} e^{r x }  }$ and $\sigma_\infty ( x ) = \max \{ x , 0 \}$.
Then
\begin{enumerate} [(i)] 
   \item \label{prop:relu:approx:item1} it holds for all $r \in [1 , \infty)$ that $\sigma_r \in C^\infty ( \R , \R )$,
    \item \label{prop:relu:approx:item2} it holds for all $r \in [1 , \infty)$, $x \in \R$ that $0  < \sigma_r ( x ) < \sigma_\infty (x) + 1$,
    \item \label{prop:relu:approx:item3} it holds for all $x \in \R$ that $\limsup_{r \to \infty} \abs*{ \sigma_r(x) - \sigma_\infty (x) } = 0$,
    \item \label{prop:relu:approx:item4} it holds for all $r \in [1 , \infty)$, $x \in \R$ that $0 < (\sigma_r)'(x) < 1$, and
    \item \label{prop:relu:approx:item5} it holds for all $x \in \R$ that $\limsup_{r \to \infty} \abs*{ (\sigma_r)'(x) - \indicator{(0, \infty)} ( x ) } = 0$.
\end{enumerate}
\end{prop}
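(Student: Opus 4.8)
The plan is to reduce everything to elementary one-variable estimates on the explicit formula $\sigma_r(x) = r^{-1}\ln(1 + r^{-1}e^{rx})$, treating the cases $x>0$, $x=0$, $x<0$ separately only where the sign of $x$ matters.

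Item \eqref{prop:relu:approx:item1} is immediate: for $r \in [1,\infty)$ the map $x \mapsto 1 + r^{-1}e^{rx}$ is smooth and strictly positive, so composing with the logarithm (smooth on $(0,\infty)$) and scaling by $r^{-1}$ shows $\sigma_r \in C^\infty(\R,\R)$. For item \eqref{prop:relu:approx:item2}, the lower bound $\sigma_r(x) > 0$ follows since $1 + r^{-1}e^{rx} > 1$ and $\ln$ is strictly increasing. For the upper bound I would use the chain $1 + r^{-1}e^{rx} \leq 1 + e^{rx} \leq 2e^{r\sigma_\infty(x)} < e^{r(\sigma_\infty(x)+1)}$, where the first step uses $r^{-1} \leq 1$, the second uses $e^{r\sigma_\infty(x)} = \max\{e^{rx},1\} \geq \tfrac{1}{2}(e^{rx}+1)$, and the last uses $2 < e \leq e^r$ (here the hypothesis $r \geq 1$ enters); taking logarithms and dividing by $r$ yields $\sigma_r(x) < \sigma_\infty(x) + 1$.

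Item \eqref{prop:relu:approx:item3} follows from a two-sided refinement of the same comparison: combining the elementary bounds $r^{-1}e^{r\sigma_\infty(x)} \leq 1 + r^{-1}e^{rx} \leq 2e^{r\sigma_\infty(x)}$ (the left one because $1 + r^{-1}e^{rx} \geq \max\{1, r^{-1}e^{rx}\} \geq r^{-1}e^{r\sigma_\infty(x)}$) gives, after taking logarithms and dividing by $r$, the sandwich $\sigma_\infty(x) - r^{-1}\ln r \leq \sigma_r(x) \leq \sigma_\infty(x) + r^{-1}\ln 2$ for all $x \in \R$ and $r \in [1,\infty)$. Since $r^{-1}\ln r \to 0$ and $r^{-1}\ln 2 \to 0$ as $r \to \infty$, we obtain $\limsup_{r\to\infty}\abs{\sigma_r(x)-\sigma_\infty(x)} = 0$ (indeed uniformly in $x$).

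For items \eqref{prop:relu:approx:item4} and \eqref{prop:relu:approx:item5} I would first compute, via the chain rule, that $(\sigma_r)'(x) = \frac{e^{rx}}{r + e^{rx}} = \frac{1}{1 + r e^{-rx}}$ for all $r \in [1,\infty)$, $x \in \R$. Item \eqref{prop:relu:approx:item4} is then immediate, since this is a quotient of positive reals whose denominator strictly exceeds its numerator. For item \eqref{prop:relu:approx:item5} I would split into cases: for $x > 0$ we have $re^{-rx} \to 0$, hence $(\sigma_r)'(x) \to 1 = \indicator{(0,\infty)}(x)$; for $x = 0$ we have $(\sigma_r)'(0) = (1+r)^{-1} \to 0 = \indicator{(0,\infty)}(0)$; and for $x < 0$ we have $re^{-rx} \to \infty$, hence $(\sigma_r)'(x) \to 0 = \indicator{(0,\infty)}(x)$. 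There is no genuine obstacle here; the only points needing care are the endpoint $r = 1$ in item \eqref{prop:relu:approx:item2} (where $e^r \geq e > 2$ is invoked) and handling the three sign regimes of $x$ consistently, so I expect the write-up to be a short sequence of explicit inequalities.
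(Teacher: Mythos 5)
Your proposal is correct and follows essentially the same route as the paper: elementary two-sided bounds on $1+r^{-1}e^{rx}$ for items (ii) and (iii), the explicit derivative formula $(\sigma_r)'(x)=(1+re^{-rx})^{-1}$ for items (iv) and (v), and a sign-of-$x$ case split for the pointwise limits. The only (cosmetic) difference is that you package the $x\le 0$ and $x>0$ cases of items (ii)--(iii) into a single sandwich via $e^{r\sigma_\infty(x)}=\max\{e^{rx},1\}$, which incidentally makes the uniformity in $x$ explicit, whereas the paper treats the two regimes separately.
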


\begin{proof} [Proof of \cref{prop:relu:approximation}]
\Nobs that the fact that $(\R \ni x \mapsto e^x \in \R) \in C^\infty ( \R , \R)$, the fact that $((0, \infty) \ni x \mapsto \ln (x) \in \R) \in C^\infty ( (0, \infty) , \R)$, and the chain rule prove item \eqref{prop:relu:approx:item1}. 
Next \nobs that for all $r \in [1 , \infty)$, $x \in (- \infty , 0 ]$ it holds that $1 < 1 + r^{-1} e^{rx} \leq 2$ and therefore 
\begin{equation} \label{eq:relu:approximation:1}
    0 < \sigma_r( x ) \leq r^{-1} \ln ( 2) < r^{-1} \leq 1 = \sigma_\infty (x) + 1.
\end{equation}
This establishes for all $x \in (- \infty , 0 ]$ that $\limsup_{r \to \infty} \abs*{ \sigma_r(x) - \sigma_\infty (x) } \leq \limsup_{r \to \infty} (r^{-1}) \allowbreak = 0 $.
Moreover, \nobs that for all $r \in [1 , \infty)$, $x \in (0, \infty)$ it holds that 
\begin{equation}
\label{eq:reluapprox:2}
0 = r^{-1} \ln ( 1 ) < \sigma _ r ( x ) \leq r^{-1} \ln ( 2 e ^{ r x } ) = x + r ^{-1} \ln ( 2 ) < x + 1 = \sigma_\infty (x) + 1.
\end{equation}
This and \eqref{eq:relu:approximation:1} prove item \eqref{prop:relu:approx:item2}.
In addition, \nobs that for all $r \in [1 , \infty)$, $x \in (0, \infty)$ it holds that $\sigma _ r ( x ) \geq r^{-1} \ln ( r^{-1} e^{r x } ) = x - r^{-1} \ln (r)$.
Combining this with \eqref{eq:reluapprox:2} demonstrates for all $x \in (0, \infty)$ that 
\begin{equation}
\begin{split} 
\limsup_{r \to \infty} \abs*{ \sigma _ r ( x ) - \sigma _ \infty ( x ) } 
&=  \limsup_{r \to \infty}  \abs*{ \sigma _ r ( x ) -  x } \\
&\leq \limsup_{r \to \infty} \br*{ \max \cu*{ r^{-1} \ln(2) , r^{-1} \ln (r) } } = 0,
\end{split}
\end{equation}
which completes the proof of item \eqref{prop:relu:approx:item3}. 
To prove item \eqref{prop:relu:approx:item4}, \nobs that the chain rule implies for all $r \in [1 , \infty)$, $x \in \R$ that
\begin{equation} \label{eq:reluapprox:3}
    (\sigma_r ) ' ( x ) = \frac{1}{r} \br*{ \frac{e^{ r x }}{1 + r^{-1} e^{r x}} } = \frac{1}{1 + r e^{-r x }}.
\end{equation}
This demonstrates for all $r \in [1 , \infty)$, $x \in \R$ that $0 < (\sigma_r) ' ( x ) < 1$, which establishes item \eqref{prop:relu:approx:item4}. Next \nobs that \eqref{eq:reluapprox:3} and the fact that for all $r \in [1 , \infty)$, $x \in (- \infty , 0]$ it holds that $e^{-r x } \geq 1$ show that for all $r \in [1 , \infty)$, $x \in (- \infty , 0]$ it holds that $(\sigma_r) ' ( x ) \leq \frac{1}{1+r}$. On the other hand, \nobs that for all $x \in (0, \infty)$ we have that $\lim_{r \to \infty} (r e^{-r x }) = 0$ and thus $\lim_{r \to \infty} (\sigma_r) ' ( x ) = 1$. This establishes item \eqref{prop:relu:approx:item5}. The proof of \cref{prop:relu:approximation} is thus complete.
\end{proof}

\subsection{Differentiability properties of the risk functions}
\label{subsection:risk:differentiable}

\begin{prop} \label{prop:limit:lr}
Assume \cref{setting:const} and let $\phi = (w_1, \ldots, w_{\width}, b_1, \ldots, b_{\width}, v_1, \ldots, \allowbreak v_{\width}, c) \in \R^{3 \width + 1}$. Then 
\begin{enumerate} [(i)]
\item \label{prop:limit:lr:1} it holds for all $r \in [1 , \infty)$ that $\cL _ r \in C^1 ( \R^{3 \width + 1}, \R)$,
    \item \label{prop:limit:lr:2} it holds for all $r \in [1 , \infty)$, $j \in \{1, 2, \ldots, \width \}$ that
\begin{equation} \label{eq:approx:loss:gradient}
    \begin{split}
        \rbr[\big]{  \tfrac{\partial  }{ \partial w_j} \cL_r } ( \phi ) &= 2 v_j \int_0^1 x \br*{  (\sigma_r )' ( w_j x + b_j) } ( \realapprox{\phi}{r}(x) - \alpha) \d x, \\
         \rbr[\big]{  \tfrac{\partial }{ \partial b_j}  \cL_r } ( \phi ) &= 2 v_j \int_0^1  \br*{  (\sigma_r) ' ( w_j x + b_j ) } ( \realapprox{\phi}{r}(x) - \alpha) \d x, \\
          \rbr[\big]{  \tfrac{\partial }{ \partial v_j}  \cL_r } ( \phi ) &= 2  \int_0^1 \br*{  \sigma_r ( w_j x + b_j ) } ( \realapprox{\phi}{r}(x) - \alpha) \d x,  \\
           \rbr[\big]{  \tfrac{\partial  }{ \partial c} \cL_r } ( \phi ) &= 2  \int_0^1 ( \realapprox{\phi}{r}(x) - \alpha) \d x,
    \end{split}
\end{equation}
\item \label{prop:limit:lr:3} it holds that $\limsup_{r \to \infty} \abs{ \cL_r ( \phi ) - \cL_\infty ( \phi) } = 0$,
\item \label{prop:limit:lr:4} it holds that $\limsup_{r \to \infty } \norm{ ( \nabla \cL _ r ) ( \phi ) - \cG ( \phi ) }  = 0$, and
\item \label{prop:limit:lr:5} it holds for all $j \in \{1, 2, \ldots, \width \}$ that
\begin{equation} \label{eq:loss:gradient}
\begin{split}
        \cG_j ( \phi) &= 2v_j \int_{I_j^\phi} x ( \realapprox{\phi}{\infty} (x) - \alpha ) \d x, \\
        \cG_{\width + j} ( \phi) &= 2 v_j \int_{I_j^\phi} (\realapprox{\phi}{\infty} (x) - \alpha ) \d x, \\
        \cG_{2 \width + j} ( \phi) &= 2 \int_0^1 [\sigma_\infty (w_j x + b_j) ] ( \realapprox{\phi}{\infty}(x) - \alpha ) \d x, \\
        \cG_{3 \width + 1} ( \phi) &= 2 \int_0^1 (\realapprox{\phi}{\infty} (x) - \alpha ) \d x.
        \end{split}
\end{equation}
\end{enumerate}
\end{prop}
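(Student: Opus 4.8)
The plan is to establish items~\eqref{prop:limit:lr:1} and~\eqref{prop:limit:lr:2} by differentiating under the integral sign, and then to deduce items~\eqref{prop:limit:lr:3}, \eqref{prop:limit:lr:4}, and~\eqref{prop:limit:lr:5} from the dominated convergence theorem, exploiting throughout that the integration defining $\cL_r$ runs over the compact interval $[0,1]$.

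For items~\eqref{prop:limit:lr:1} and~\eqref{prop:limit:lr:2} I would fix $r\in[1,\infty)$ and note that item~\eqref{prop:relu:approx:item1} in \cref{prop:relu:approximation} gives $\sigma_r\in C^\infty(\R,\R)$, so that for every $x\in\R$ the map $\R^{3\width+1}\ni\varphi\mapsto(\realapprox{\varphi}{r}(x)-\alpha)^2\in\R$ is continuously differentiable; the chain rule applied to the formula for $\realapprox{\varphi}{r}$ from \cref{setting:const} then shows, for all $j\in\{1,2,\ldots,\width\}$, that its partial derivatives in the directions $w_j$, $b_j$, $v_j$, $c$ equal, respectively, $2v_j x\,(\sigma_r)'(w_j x+b_j)(\realapprox{\phi}{r}(x)-\alpha)$, $2v_j\,(\sigma_r)'(w_j x+b_j)(\realapprox{\phi}{r}(x)-\alpha)$, $2\sigma_r(w_j x+b_j)(\realapprox{\phi}{r}(x)-\alpha)$, and $2(\realapprox{\phi}{r}(x)-\alpha)$. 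Each of these is jointly continuous in $(\varphi,x)$ and, by items~\eqref{prop:relu:approx:item2} and~\eqref{prop:relu:approx:item4} in \cref{prop:relu:approximation} (which bound $\sigma_r$ and $(\sigma_r)'$), bounded uniformly in $x\in[0,1]$ when $\varphi$ ranges over a bounded set. The standard criterion for differentiation under the integral sign (cf.\ \cref{cor:interchange}) then yields $\cL_r\in C^1(\R^{3\width+1},\R)$ with $(\nabla\cL_r)(\phi)$ obtained by integrating the above partial derivatives over $[0,1]$, which is precisely~\eqref{eq:approx:loss:gradient}.

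For item~\eqref{prop:limit:lr:3} I would use $a^2-b^2=(a+b)(a-b)$ to write
\begin{equation*}
  \cL_r(\phi)-\cL_\infty(\phi)=\int_0^1\bigl(\realapprox{\phi}{r}(y)-\realapprox{\phi}{\infty}(y)\bigr)\bigl(\realapprox{\phi}{r}(y)+\realapprox{\phi}{\infty}(y)-2\alpha\bigr)\d y ,
\end{equation*}
observe that item~\eqref{prop:relu:approx:item2} in \cref{prop:relu:approximation} gives $\sup_{r\in[1,\infty)}\sup_{y\in[0,1]}\abs{\realapprox{\phi}{r}(y)}<\infty$ (since $0<\sigma_r(w_j y+b_j)<\abs{w_j}+\abs{b_j}+1$ for $y\in[0,1]$), and that item~\eqref{prop:relu:approx:item3} in \cref{prop:relu:approximation} gives $\lim_{r\to\infty}\realapprox{\phi}{r}(y)=\realapprox{\phi}{\infty}(y)$ for every $y\in[0,1]$, and then conclude by the dominated convergence theorem with a dominating constant independent of $r$ and $y$.

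Finally, for items~\eqref{prop:limit:lr:4} and~\eqref{prop:limit:lr:5} I would apply the dominated convergence theorem to each of the four integrands in~\eqref{eq:approx:loss:gradient}: by items~\eqref{prop:relu:approx:item2} and~\eqref{prop:relu:approx:item4} in \cref{prop:relu:approximation} together with the uniform bound on $\realapprox{\phi}{r}$ from the previous step, these integrands are bounded uniformly in $r\in[1,\infty)$ and $y\in[0,1]$; for the pointwise convergence, item~\eqref{prop:relu:approx:item3} in \cref{prop:relu:approximation} gives $\sigma_r(w_j y+b_j)\to\sigma_\infty(w_j y+b_j)$ and $\realapprox{\phi}{r}(y)\to\realapprox{\phi}{\infty}(y)$ for all $y\in[0,1]$, while item~\eqref{prop:relu:approx:item5} in \cref{prop:relu:approximation} gives $(\sigma_r)'(w_j y+b_j)\to\indicator{(0,\infty)}(w_j y+b_j)$ for all $y\in[0,1]$ with $w_j y+b_j\ne0$, and also (because $(\sigma_r)'(0)=(1+r)^{-1}\to0=\indicator{(0,\infty)}(0)$) for all $y\in[0,1]$ in the degenerate case $(w_j,b_j)=(0,0)$. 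Since $\{y\in[0,1]\colon w_j y+b_j=0\}$ has Lebesgue measure zero whenever $(w_j,b_j)\ne(0,0)$ and $\indicator{(0,\infty)}(w_j y+b_j)=\indicator{I_j^\phi}(y)$ for $y\in[0,1]$, this yields $(\sigma_r)'(w_j y+b_j)\to\indicator{I_j^\phi}(y)$ for Lebesgue-a.e.\ $y\in[0,1]$, so dominated convergence shows that every component of $(\nabla\cL_r)(\phi)$ converges as $r\to\infty$ (in particular along $r\in\N$) to the corresponding right-hand side of~\eqref{eq:loss:gradient}; hence $\phi$ lies in the set on which $\cG$ is defined as $\lim_{r\to\infty}(\nabla\cL_r)(\phi)$, which gives item~\eqref{prop:limit:lr:4}, and $\cG(\phi)$ equals the right-hand side of~\eqref{eq:loss:gradient}, which gives item~\eqref{prop:limit:lr:5}. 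I expect the only slightly delicate point to be this last step -- identifying the a.e.\ limit of $(\sigma_r)'(w_j\,\cdot+b_j)$ with $\indicator{I_j^\phi}$, including the degenerate case $(w_j,b_j)=(0,0)$, and verifying the uniform domination that justifies passing each limit inside the integral; the remaining manipulations are routine.
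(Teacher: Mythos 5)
Your proposal is correct and follows essentially the same route as the paper: explicit formulas for the partials of $\cL_r$ via the chain rule and differentiation under the integral sign, followed by the dominated convergence theorem with the uniform bounds from \cref{prop:relu:approximation} to pass to the limit $r\to\infty$ in each component. The only cosmetic differences are that the paper applies dominated convergence directly to $(\realapprox{\phi}{r}(y)-\alpha)^2$ rather than factoring $a^2-b^2$, and that your care about the measure-zero set $\{y\colon w_jy+b_j=0\}$ is not actually needed, since item~\eqref{prop:relu:approx:item5} of \cref{prop:relu:approximation} already gives $(\sigma_r)'(t)\to\indicator{(0,\infty)}(t)$ at every $t\in\R$, including $t=0$.
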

\begin{proof} [Proof of \cref{prop:limit:lr}]
\Nobs that \cref{prop:relu:approximation}, the chain rule, and the dominated convergence theorem establish items \eqref{prop:limit:lr:1} and \eqref{prop:limit:lr:2}.
Next \nobs that \cref{prop:relu:approximation} demonstrates for all $x \in [0, 1]$ that $\lim_{r \to \infty} ( \realapprox{\phi}{r} ( x ) - \alpha ) = \realapprox{\phi}{\infty} ( x ) - \alpha$. 
Furthermore, \nobs that \cref{prop:relu:approximation} shows that for all $x \in [0,1]$, $r \in [1 , \infty)$ it holds that
\begin{equation} \label{proof:limit:lr:eq1}
\begin{split}
     \abs{ \realapprox{\phi}{r} ( x ) - \alpha }
   &\leq \abs{ \alpha } + \abs{ c } + \smallsum_{j=1}^\width | v_j | ( \sigma_\infty ( w_j x + b_j ) + 1 ) \\
   &\leq | \alpha | + | c | + \smallsum_{j=1}^\width | v_j | ( | w_j | + | b_j | + 1 ).
   \end{split}
\end{equation}
The dominated convergence theorem hence proves that $\lim_{r \to \infty} \cL_r ( \phi ) = \cL_\infty ( \phi)$, which establishes item \eqref{prop:limit:lr:3}. Moreover, \nobs that the fact that $\forall \, x \in [0,1] \colon \lim_{r \to \infty} ( \realapprox{\phi}{r} ( x ) - \alpha ) = \realapprox{\phi}{\infty} ( x ) - \alpha$, \eqref{proof:limit:lr:eq1}, and the dominated convergence theorem prove that
\begin{equation} \label{limit:lr:eq2}
    \lim_{r \to \infty}  \br*{  \rbr[\big]{ \tfrac{\partial  }{ \partial c} \cL_r } ( \phi ) }
    = 2 \int_0^1 (\realapprox{\phi}{\infty} (x) - \alpha ) \d x.
\end{equation}
Next \nobs that \cref{prop:relu:approximation} shows for all $x \in [0,1]$, $j \in \{1, 2, \dots, \width \}$ that
\begin{equation}
    \begin{split}
        \lim_{r \to \infty}  \br*{ x  \br*{  (\sigma_r) ' ( w_j x + b_j ) } ( \realapprox{\phi}{r}(x) - \alpha) }
        &= x ( \realapprox{\phi}{\infty} ( x ) - \alpha ) \indicator{(0 , \infty ) }  ( w_j x + b_j) \\
        &= x ( \realapprox{\phi}{\infty} ( x ) - \alpha )\indicator{I_j^\phi} ( x )
    \end{split}
\end{equation}
and
\begin{equation}
\begin{split}
     \lim_{r \to \infty} \br*{   [(\sigma_r) ' ( w_j x + b_j )] ( \realapprox{\phi}{r}(x) - \alpha) } 
     &=  ( \realapprox{\phi}{\infty} ( x ) - \alpha ) \indicator{(0 , \infty ) } ( w_j x + b_j ) \\
     &=  ( \realapprox{\phi}{\infty} ( x ) - \alpha )  \indicator{I_j^\phi} ( x ).
     \end{split}
\end{equation}
Furthermore, \nobs that \cref{prop:relu:approximation} and \eqref{proof:limit:lr:eq1} prove that for all $r \in [1 , \infty)$, $x \in [0,1]$, $j \in \{1, 2, \ldots, \width \}$ it holds that
\begin{equation}
\begin{split}
    &\abs[\big]{  x [(\sigma_r) ' ( w_j x + b_j )] ( \realapprox{\phi}{r}(x) - \alpha) } \\
    &\leq   \abs[\big]{  [(\sigma_r ) ' ( w_j x + b_j )] ( \realapprox{\phi}{r}(x) - \alpha) }\\
    &\leq| \realapprox{\phi}{r} ( x ) - \alpha |
   \leq | \alpha | + | c | + \smallsum_{j=1}^\width | v_j | ( | w_j | + | b_j | + 1 ).
   \end{split}
\end{equation}
The dominated convergence theorem hence proves for all $j \in \{1, 2, \ldots, \width \}$ that
\begin{equation} \label{limit:lr:eq3}
    \lim_{r \to \infty} \br[\big]{  \rbr[\big]{  \tfrac{\partial  }{ \partial w_j} \cL_r } ( \phi ) }
    =  2 v_j \int_0^1 x ( \realapprox{\phi}{\infty} ( x ) - \alpha )  \indicator{I_j^\phi} ( x ) \d x 
    = 2v_j \int_{I_j^\phi} x ( \realapprox{\phi}{\infty} (x) - \alpha ) \d x
\end{equation}
and
\begin{equation} \label{limit:lr:eq4}
    \lim_{r \to \infty} \br[\big]{  \rbr[\big]{  \tfrac{\partial  }{ \partial b_j} \cL_r } ( \phi ) }
    =2 v_j   \int_0^1   ( \realapprox{\phi}{\infty} ( x ) - \alpha ) \indicator{I_j^\phi} ( x ) \d x 
    =  2v_j \int_{I_j^\phi } ( \realapprox{\phi}{\infty} (x) - \alpha ) \d x .
\end{equation}
Moreover, \nobs that \cref{prop:relu:approximation} and \eqref{proof:limit:lr:eq1} show that for all $r \in [1 , \infty)$, $x \in [0,1]$, $j \in \{1, 2, \ldots, \width \}$ it holds that
\begin{equation}
    \lim_{r \to \infty} \br*{  [\sigma_r ( w_j x + b_j )] ( \realapprox{\phi}{r}(x) - \alpha) } = [\sigma _\infty ( w_j x + b_j )] ( \realapprox{\phi}{\infty}(x) - \alpha)
\end{equation}
and
\begin{equation}
\begin{split}
    &\abs[\big]{  [\sigma_r ( w_j x + b_j )] ( \realapprox{\phi}{r}(x) - \alpha) } \\
    & \leq (\sigma_\infty ( w_j x + b_j ) + 1 ) | \realapprox{\phi}{r}(x) - \alpha | 
   \\
   &\leq ( 1 + | w_j  | + | b_j | ) | \realapprox{\phi}{r}(x) - \alpha | \\ &
    \leq ( 1 + | w_j | + | b_j | ) \rbr*{  | \alpha | + | c | + \smallsum_{j=1}^\width | v_j | ( | w_j | + | b_j | + 1 )  }.
\end{split}
\end{equation}
This and the dominated convergence theorem demonstrate for all $j \in \{1, 2, \ldots, \width \}$ that 
\begin{equation} 
\lim_{r \to \infty} \br[\big]{  \rbr[\big]{  \tfrac{\partial  }{ \partial v_j} \cL_r } ( \phi ) } = 2 \int_0^1 [\sigma_\infty (w_j x + b_j)] ( \realapprox{\phi}{\infty}(x) - \alpha ) \d x.
\end{equation}
Combining this, \eqref{limit:lr:eq2}, \eqref{limit:lr:eq3}, and \eqref{limit:lr:eq4} establishes items \eqref{prop:limit:lr:4} and \eqref{prop:limit:lr:5}. The proof of \cref{prop:limit:lr} is thus complete.
\end{proof}

\begin{lemma} \label{lem:interchange}
Let $\fu \in \R$, $\fv \in (\fu , \infty)$, let $f \colon \R \times [\fu , \fv] \to \R$ be locally Lipschitz continuous, let $F \colon \R \to \R$ satisfy for all $x \in \R$ that
\begin{equation}
    F(x) = \int_\fu ^\fv f(x,s) \d s,
\end{equation}
let $x \in \R$, let $E \subseteq [\fu , \fv]$ be measurable, assume $\int_{[\fu , \fv] \backslash E } 1 \d s = 0 $, and assume for all $s \in E$ that $\R \ni v \mapsto f ( v , s ) \in \R$ is differentiable at $x$. Then
\begin{enumerate} [(i)]
    \item it holds that $F$ is differentiable at $x$ and
    \item it holds that
    \begin{equation}
        F'(x) = \int_E \rbr[\big]{\tfrac{\partial}{\partial x} f }  ( x , s ) \d s.
    \end{equation}
\end{enumerate}
\end{lemma}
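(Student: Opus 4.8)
The plan is to differentiate under the integral sign directly, by studying the difference quotients of $F$ and passing to the limit with the dominated convergence theorem. Fix the point $x \in \R$ from the statement. I would first record that, since $f$ is locally Lipschitz continuous and hence (jointly) continuous, for every $h \in \R \setminus \{0\}$ the section $[\fu , \fv] \ni s \mapsto h^{-1}(f(x+h,s) - f(x,s)) \in \R$ is Borel measurable, and that, because $\int_{[\fu , \fv] \backslash E} 1 \d s = 0$, we have
\begin{equation}
  \frac{F(x+h) - F(x)}{h} = \int_\fu^\fv \frac{f(x+h,s) - f(x,s)}{h} \d s = \int_E \frac{f(x+h,s) - f(x,s)}{h} \d s .
\end{equation}

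The key quantitative ingredient is a uniform-in-$s$ Lipschitz bound near $x$. Since $f$ is locally Lipschitz on $\R \times [\fu , \fv]$ and the set $K = [x-1 , x+1] \times [\fu , \fv]$ is compact and convex, a standard argument (cover $K$ by finitely many balls on each of which $f$ is Lipschitz, use the Lebesgue number lemma, and chain the estimates along line segments, using convexity of $K$) yields a constant $L \in [0 , \infty)$ with $\abs{f(a) - f(b)} \leq L \norm{a - b}$ for all $a , b \in K$. In particular, for all $s \in [\fu , \fv]$ and all $h \in \R$ with $0 < \abs{h} \leq 1$ it holds that $\abs{h^{-1}(f(x+h,s) - f(x,s))} \leq L$. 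As $[\fu , \fv]$ has finite Lebesgue measure, the constant $L$ is an integrable dominating function for these difference quotients.

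Next I would invoke the hypothesis that for every $s \in E$ the map $\R \ni v \mapsto f(v,s) \in \R$ is differentiable at $x$: this gives $\lim_{h \to 0} h^{-1}(f(x+h,s) - f(x,s)) = (\tfrac{\partial}{\partial x} f)(x,s)$ for every $s \in E$, and shows in passing that $E \ni s \mapsto (\tfrac{\partial}{\partial x} f)(x,s)$ is measurable, being a pointwise limit (say along $h = 1/n$) of the measurable difference quotients above. Applying the dominated convergence theorem along an arbitrary sequence $(h_n)_{n \in \N} \subseteq \R \setminus \{0\}$ with $\lim_{n \to \infty} h_n = 0$ then yields
\begin{equation}
  \lim_{n \to \infty} \frac{F(x+h_n) - F(x)}{h_n} = \int_E \rbr[\big]{ \tfrac{\partial}{\partial x} f }(x,s) \d s ,
\end{equation}
and since the sequence was arbitrary, this is an ordinary limit in $h$, which simultaneously establishes that $F$ is differentiable at $x$ and that $F'(x) = \int_E (\tfrac{\partial}{\partial x} f)(x,s) \d s$.

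I expect the only genuine obstacle to be the uniform Lipschitz estimate: ``locally Lipschitz'' is a pointwise property, and upgrading it to a single Lipschitz constant valid on the whole tube $[x-1 , x+1] \times [\fu , \fv]$ is precisely the step requiring the compactness-and-convexity covering argument sketched above. Once that bound is in hand, the remainder is a routine application of the dominated convergence theorem.
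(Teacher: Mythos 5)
Your proposal is correct and follows essentially the same route as the paper's proof: rewrite the difference quotient of $F$ as an integral over $E$ using the null-set hypothesis, dominate the difference quotients by a uniform Lipschitz constant obtained from local Lipschitz continuity on a compact neighbourhood, and conclude with the dominated convergence theorem. The only difference is that you spell out the compactness-and-convexity covering argument for the uniform Lipschitz bound, which the paper simply asserts.
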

\begin{proof} [Proof of \cref{lem:interchange}]
 \Nobs that the assumption that $\int_{[\fu , \fv] \backslash E } 1 \d s = 0 $ ensures that for all $h \in \R \backslash \{ 0 \}$ we have that
\begin{equation} \label{lem:interchange:eq1}
 h^{-1} [ F(x+h) - F(x)] = \int_\fu^\fv h^{-1} [ f(x+h , s ) - f(x,s)] \d s = \int_E h^{-1} [ f(x+h , s ) - f(x,s)] \d s.
\end{equation}
Next \nobs that the assumption that for all $s \in E$ it holds that $\R \ni v \mapsto f ( v , s) \in \R$ is differentiable at $x$ implies that for all $s \in E$ it holds that
\begin{equation} \label{lem:interchange:eq2}
    \lim\nolimits_{\abs{h} \searrow  0} \rbr*{ h^{-1} \br{ f( x+h, s) - f(x, s)}} = \rbr[\big]{\tfrac{\partial}{\partial x} f }(x, s ).
\end{equation}
Furthermore, \nobs that the assumption that $f$ is locally Lipschitz continuous ensures that for all $\delta \in (0, \infty)$ there exists $C \in (0, \infty)$ such that for all $h \in [-\delta, \delta] \backslash \{0 \}$, $s \in [ \fu , \fv]$ we have that $| h^{-1} \br{ f( x+h, s) - f(x, s)}| \leq C$. Combining this, \eqref{lem:interchange:eq1}, \eqref{lem:interchange:eq2}, and the dominated convergence theorem establishes that
\begin{equation}
    \begin{split}
        \lim\nolimits_{\abs{h} \searrow 0} \rbr*{ h^{-1}  \br{F(x+h) - F(x)}} 
        &= \int_E \br*{ \lim\nolimits_{\abs{h} \searrow 0}  \rbr*{ h^{-1} \br{ f( x+h, s) - f(x, s)}  }} \d s \\
        &= \int_E \rbr[\big]{\tfrac{\partial}{\partial x} f }(x, s ) \d s.
    \end{split}
\end{equation}
This completes the proof of \cref{lem:interchange}.
\end{proof}

\begin{cor} \label{cor:interchange}
Let $n \in \N$, $j \in \{1, 2, \ldots, n \}$, $\fu \in \R$, $\fv \in (\fu , \infty)$, let $f \colon \R^n \times [\fu , \fv] \to \R$ be locally Lipschitz continuous, let $F \colon \R^n \to \R$ satisfy for all $x \in \R^n$ that
\begin{equation}
    F(x) = \int_\fu ^\fv f (x , s ) \d s,
\end{equation}
let $x_1, x_2, \ldots, x_n \in \R$, let $E \subseteq [\fu , \fv ]$ be measurable, assume $\int_{[\fu , \fv] \backslash E } 1 \d s = 0 $, and assume for all $s \in E$ that $\R \ni v \mapsto f ( x_1, \ldots, x_{j-1}, v, x_{j+1}, \ldots, x_n, s) \in \R$ is differentiable at $x_j$. Then
\begin{enumerate} [(i)]
    \item \label{cor:interchange:item1} it holds that $\R \ni v \mapsto F(x_1, \ldots, x_{j-1}, v , x_{j+1}, \ldots, x_n) \in \R$ is differentiable at $x_j$ and
    \item \label{cor:interchange:item2} it holds that
    \begin{equation}
        \rbr[\big]{\tfrac{\partial}{\partial x_j} F} ( x_1, \ldots, x_n) = \int_E \rbr[\big]{\tfrac{\partial}{\partial x_j} f }(x_1, \ldots, x_n, s ) \d s.
    \end{equation}
\end{enumerate}
\end{cor}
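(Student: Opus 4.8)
The plan is to deduce \cref{cor:interchange} from \cref{lem:interchange} by freezing all coordinates except the $j$-th one and viewing the resulting object as a function of a single real variable. Concretely, I would introduce the function $g \colon \R \times [\fu , \fv] \to \R$ which satisfies for all $v \in \R$, $s \in [\fu , \fv]$ that $g(v , s) = f(x_1 , \ldots , x_{j-1}, v , x_{j+1}, \ldots , x_n , s)$ together with the function $G \colon \R \to \R$ which satisfies for all $v \in \R$ that $G(v) = \int_\fu^\fv g(v , s) \d s$, so that by construction $G(v) = F(x_1, \ldots, x_{j-1}, v, x_{j+1}, \ldots, x_n)$ for all $v \in \R$.

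Next I would check that the hypotheses of \cref{lem:interchange} are satisfied for $g$, $G$, the point $x_j$, and the set $E$. The only point requiring an argument is local Lipschitz continuity of $g$: the map $\R \times [\fu,\fv] \ni (v,s) \mapsto (x_1, \ldots, x_{j-1}, v, x_{j+1}, \ldots, x_n, s) \in \R^n \times [\fu, \fv]$ is affine and hence Lipschitz continuous on every bounded subset of its domain, and a composition of locally Lipschitz continuous functions is again locally Lipschitz continuous; therefore $g$ inherits local Lipschitz continuity from $f$. The identity $G(v) = \int_\fu^\fv g(v,s)\d s$ holds by definition, the measure-zero condition $\int_{[\fu,\fv]\backslash E} 1 \d s = 0$ is assumed, and the assumption that for every $s \in E$ the map $\R \ni v \mapsto f(x_1, \ldots, x_{j-1}, v, x_{j+1}, \ldots, x_n, s) \in \R$ is differentiable at $x_j$ is precisely the statement that $\R \ni v \mapsto g(v,s) \in \R$ is differentiable at $x_j$.

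\cref{lem:interchange} then yields that $G$ is differentiable at $x_j$ and that $G'(x_j) = \int_E (\tfrac{\partial}{\partial v} g)(x_j , s) \d s$. Finally I would translate this back: differentiability of $G$ at $x_j$ is exactly item \eqref{cor:interchange:item1}, and unwinding the definitions of $g$ and $G$ gives $(\tfrac{\partial}{\partial x_j} F)(x_1, \ldots, x_n) = G'(x_j)$ together with $(\tfrac{\partial}{\partial v} g)(x_j, s) = (\tfrac{\partial}{\partial x_j} f)(x_1, \ldots, x_n, s)$ for ($\int 1 \d s$)-almost all $s \in E$, which produces the formula in item \eqref{cor:interchange:item2}. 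There is no real obstacle here; the only thing to be careful about is that \cref{lem:interchange} must be applied with its function $f$ replaced by the frozen function $g$, so that local Lipschitz continuity has to be verified for $g$ rather than merely quoted for $f$, while the exceptional set $E$ and the base point carry over verbatim.
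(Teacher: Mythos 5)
Your argument is correct and is exactly the reduction the paper intends: the paper's proof consists of the single sentence that \cref{lem:interchange} establishes both items, i.e.\ it applies the one-variable lemma to the function obtained by freezing all coordinates except the $j$-th, just as you do. Your additional verification that the frozen function inherits local Lipschitz continuity is a detail the paper leaves implicit, so the two proofs coincide in substance.
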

\begin{proof} [Proof of \cref{cor:interchange}]
\Nobs that \cref{lem:interchange} establishes items \eqref{cor:interchange:item1} and \eqref{cor:interchange:item2}.
The proof of \cref{cor:interchange} is thus complete.
\end{proof}

\begin{lemma} \label{prop:loss:diff:vc}
Assume \cref{setting:const} and let $\phi = ( \phi_1, \ldots, \phi_{3 \width + 1 } ) \in \R^{3 \width + 1 }$. Then
\begin{enumerate} [(i)]
    \item \label{prop:loss:diff:vc:item1} it holds for all $j \in \N \cap (2 \width , 3 \width + 1 ]$ that $\R \ni v \mapsto \cL_\infty ( \phi_1, \ldots, \phi_{  j - 1}, v, \phi_{j+1}, \ldots, \phi_{3 \width + 1 }) \in \R$ is differentiable at $\phi_{ j}$ and
    \item \label{prop:loss:diff:vc:item2} it holds for all $j \in \N \cap (2 \width , 3 \width + 1 ]$ that $( \frac{\partial}{\partial \phi_{ j} } \cL_\infty )(\phi ) = \cG_{j } ( \phi)$.
\end{enumerate}
\end{lemma}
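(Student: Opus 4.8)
The plan is to reduce the statement to \cref{cor:interchange} applied to the integrand of $\cL_\infty$. Concretely, define $f \colon \R^{3\width+1} \times [0,1] \to \R$ by setting, for all $\varphi \in \R^{3\width+1}$ and $s \in [0,1]$, $f(\varphi,s) = (\realapprox{\varphi}{\infty}(s) - \alpha)^2$, so that $\cL_\infty(\varphi) = \int_0^1 f(\varphi,s) \d s$ for all $\varphi \in \R^{3\width+1}$. The crucial point is that for every index $j \in \N \cap (2\width, 3\width+1]$ the realization $\realapprox{\varphi}{\infty}(s) = \c{\varphi} + \sum_{k=1}^\width \v{\varphi}_k \sigma_\infty(\w{\varphi}_k s + \b{\varphi}_k)$ depends affinely (indeed, as a polynomial of degree at most $1$) on the coordinate $\varphi_j$: such a $\varphi_j$ is either one of the outer weights $\v{\varphi}_k$ or the outer bias $\c{\varphi}$, and neither of these enters the nondifferentiable rectifier terms $\sigma_\infty(\w{\varphi}_k s + \b{\varphi}_k)$. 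Hence $\R \ni v \mapsto f(\varphi_1,\ldots,\varphi_{j-1},v,\varphi_{j+1},\ldots,\varphi_{3\width+1},s) \in \R$ is a polynomial of degree at most $2$, so it is differentiable at every point, in particular at $v = \phi_j$.

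To invoke \cref{cor:interchange} it then remains only to verify that $f$ is locally Lipschitz continuous. This follows since $\sigma_\infty$ is globally Lipschitz continuous (with Lipschitz constant $1$), the maps $(\varphi,s) \mapsto \w{\varphi}_k s + \b{\varphi}_k$ are affine, finite products of locally Lipschitz functions are locally Lipschitz, and $t \mapsto (t-\alpha)^2$ is locally Lipschitz; composing and combining these building blocks shows that $\R^{3\width+1} \times [0,1] \ni (\varphi,s) \mapsto f(\varphi,s) \in \R$ is locally Lipschitz continuous. I would therefore apply \cref{cor:interchange} with $n = 3\width+1$, $\fu = 0$, $\fv = 1$, the above $f$, and $E = [0,1]$ (so that the hypothesis $\int_{[0,1] \backslash E} 1 \d s = 0$ holds trivially), which yields, for each $j \in \N \cap (2\width, 3\width+1]$, that $\R \ni v \mapsto \cL_\infty(\phi_1,\ldots,\phi_{j-1},v,\phi_{j+1},\ldots,\phi_{3\width+1}) \in \R$ is differentiable at $\phi_j$ and that $(\tfrac{\partial}{\partial \phi_j}\cL_\infty)(\phi) = \int_0^1 (\tfrac{\partial}{\partial \phi_j} f)(\phi,s) \d s$. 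This establishes item \eqref{prop:loss:diff:vc:item1}.

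For item \eqref{prop:loss:diff:vc:item2} I would differentiate $f$ in the relevant coordinate by the chain and product rules: for $j = 2\width + k$ with $k \in \{1,2,\ldots,\width\}$ one gets $(\tfrac{\partial}{\partial \phi_j} f)(\phi,s) = 2\sigma_\infty(\w{\phi}_k s + \b{\phi}_k)(\realapprox{\phi}{\infty}(s) - \alpha)$, while for $j = 3\width+1$ one gets $(\tfrac{\partial}{\partial \phi_j} f)(\phi,s) = 2(\realapprox{\phi}{\infty}(s) - \alpha)$. Substituting these into the integral formula obtained from \cref{cor:interchange} and comparing with the expressions for $\cG_{2\width+k}(\phi)$ and $\cG_{3\width+1}(\phi)$ provided by item \eqref{prop:limit:lr:5} of \cref{prop:limit:lr} gives $(\tfrac{\partial}{\partial \phi_j}\cL_\infty)(\phi) = \cG_j(\phi)$ for all $j \in \N \cap (2\width, 3\width+1]$, as claimed. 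I do not anticipate a genuine obstacle here: the only mildly technical step is the verification of local Lipschitz continuity of $f$, and even that is routine once one observes that every constituent function is Lipschitz on bounded sets. The conceptual content is simply that the rectifier nonlinearity is invisible to the outer-layer parameters $\v{\cdot}$ and $\c{\cdot}$, so — in contrast to the $\w{\cdot}$ and $\b{\cdot}$ coordinates — no regularization/approximation argument is needed to differentiate $\cL_\infty$ in these directions.
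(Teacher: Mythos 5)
Your proposal is correct and follows essentially the same route as the paper's own proof: both verify local Lipschitz continuity of the integrand, observe that the integrand is differentiable in the outer-layer coordinates for every $x \in [0,1]$ (so \cref{cor:interchange} applies with a full-measure exceptional-free set), compute the partial derivatives by the chain rule, and match the result against the formulas in \eqref{eq:loss:gradient}. The only cosmetic difference is that the paper freezes the first $2\width$ coordinates and works with a function on $\R^{\width+1} \times [0,1]$, whereas you keep all $3\width+1$ coordinates; this changes nothing of substance.
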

\begin{proof} [Proof of \cref{prop:loss:diff:vc}]
\Nobs that the fact that $\sigma_\infty$ is Lipschitz continuous assures that
\begin{equation}
    \R^{\width + 1 } \times [0,1] \ni (u_1, \ldots, u_{\width + 1 }, x ) \mapsto  \rbr[\big]{\realapprox{(\phi_1, \ldots, \phi_{2 \width}, u_1, \ldots,  u_{\width + 1 }) }{\infty} (x) - \alpha  }^2 \in \R
\end{equation}
is locally Lipschitz continuous. In addition, \nobs that for all $u_1, u_2, \ldots, u_{\width + 1 } \in \R$, $j \in \{1, 2, \ldots, \width + 1 \}$, $x \in [0,1]$ it holds that
\begin{equation}
   \R \ni v \mapsto \rbr[\big]{\realapprox{(\phi_1, \ldots, \phi_{2 \width}, u_1, \ldots, u_{j-1}, v, u_{j+1}, \ldots, u_{\width + 1 } ) }{\infty} (x) - \alpha  }^2 \in \R
\end{equation}
is differentiable at $u_j$. Moreover, \nobs that the chain rule implies that for all $j \in \{1, 2, \ldots, \width \}$, $x \in [0,1]$ it holds that
\begin{equation}
    \tfrac{\partial}{\partial \phi_{2 \width + j} } \br[\big]{ ( \realapprox{\phi}{\infty} ( x ) - \alpha ) ^2 } = 2  [\sigma_\infty (\phi_j x + \phi_{\width + j}) ] ( \realapprox{\phi}{\infty}(x) - \alpha )
\end{equation}
and
\begin{equation}
    \tfrac{\partial}{\partial \phi_{3 \width + 1} } \br[\big]{ ( \realapprox{\phi}{\infty} ( x ) - \alpha ) ^2 } = 2 ( \realapprox{\phi}{\infty}(x) - \alpha ).
\end{equation}
Combining this, \cref{cor:interchange}, and \eqref{eq:loss:gradient} establishes items \eqref{prop:loss:diff:vc:item1} and \eqref{prop:loss:diff:vc:item2}. The proof of \cref{prop:loss:diff:vc} is thus complete.
\end{proof}

\begin{lemma} \label{prop:loss:diff:wb}
Assume \cref{setting:const}, let $\phi = ( \phi_1, \ldots, \phi_{3 \width + 1 } ) \in \R^{3 \width + 1 }$, and let $j \in \{1, 2, \ldots, \width \}$, $i \in \{j , \width + j \}$ satisfy $| \phi_j | + | \phi_{\width + j } | > 0$. Then
\begin{enumerate} [(i)]
    \item \label{prop:loss:diff:wb:item1} it holds that $\R \ni v \mapsto \cL_\infty ( \phi_1, \ldots, \phi_{i-1}, v, \phi_{i+1}, \ldots, \phi_{3 \width + 1 }) \in \R$ is differentiable at $\phi_i$ and
    \item \label{prop:loss:diff:wb:item2} it holds that $( \frac{\partial}{\partial \phi_i } \cL_\infty )(\phi ) = \cG_{ i } ( \phi)$.
\end{enumerate}
\end{lemma}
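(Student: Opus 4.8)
The plan is to mimic the proof of \cref{prop:loss:diff:vc}: I would write the restriction of $\cL_\infty$ to the $i$-th coordinate line through $\phi$ as an integral over $[0,1]$ of a locally Lipschitz integrand, verify that this integrand is differentiable in the $i$-th variable for Lebesgue-almost every $x\in[0,1]$, and then invoke \cref{cor:interchange} to differentiate under the integral. Concretely, writing $\phi=(w_1,\dots,w_{\width},b_1,\dots,b_{\width},v_1,\dots,v_{\width},c)$ and letting $\psi(u)\in\R^{3\width+1}$ denote the vector obtained from $\phi$ by replacing its $i$-th entry with $u$, I would define $f\colon\R\times[0,1]\to\R$ by $f(u,x)=(\realapprox{\psi(u)}{\infty}(x)-\alpha)^2$. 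Since $\sigma_\infty$ is Lipschitz continuous and $f$ is a polynomial expression in $u$, $x$, and the (locally bounded, locally Lipschitz) quantities $\sigma_\infty(w_kx+b_k)$, the map $f$ is locally Lipschitz continuous, and $\cL_\infty(\psi(u))=\int_0^1 f(u,x)\d x$ for every $u\in\R$.

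The heart of the matter is the almost-everywhere differentiability of $u\mapsto f(u,x)$ at $u=\phi_i$. Only the summand $v_j\sigma_\infty(w_jx+b_j)$ of $\realapprox{\psi(u)}{\infty}(x)$ depends on $u$ -- through $w_j$ if $i=j$, through $b_j$ if $i=\width+j$ -- and $\sigma_\infty$ is differentiable on $\R\setminus\{0\}$. Hence $u\mapsto f(u,x)$ is differentiable at $u=\phi_i$ whenever $w_jx+b_j\neq 0$ (and, in the case $i=j$, also when $x=0$, since then the summand is constant in $u$). I would therefore take $E=\{x\in[0,1]\colon w_jx+b_j\neq 0\}$, which is open in $[0,1]$ hence measurable, and argue that $[0,1]\setminus E$ is a Lebesgue-null set: if $w_j\neq 0$ then $[0,1]\setminus E\subseteq\{-b_j/w_j\}$ has at most one element, while if $w_j=0$ then the hypothesis $|\phi_j|+|\phi_{\width+j}|=|w_j|+|b_j|>0$ forces $b_j\neq 0$, so $w_jx+b_j=b_j\neq 0$ for all $x$ and $[0,1]\setminus E=\emptyset$. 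This case distinction, powered by the assumption $|\phi_j|+|\phi_{\width+j}|>0$, is the only genuinely new ingredient compared with \cref{prop:loss:diff:vc}; I expect it to be the main (and essentially the only) obstacle, and it is a mild one.

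Finally, I would compute the derivative. By the chain rule, for $x\in E$ one has $\big(\tfrac{\partial}{\partial u}f\big)(\phi_i,x)=2v_jx\,\indicator{(0,\infty)}(w_jx+b_j)(\realapprox{\phi}{\infty}(x)-\alpha)$ when $i=j$ and $\big(\tfrac{\partial}{\partial u}f\big)(\phi_i,x)=2v_j\,\indicator{(0,\infty)}(w_jx+b_j)(\realapprox{\phi}{\infty}(x)-\alpha)$ when $i=\width+j$. Applying \cref{cor:interchange} with this $E$ (in the trivial case of one variable) then yields item \eqref{prop:loss:diff:wb:item1} together with $(\tfrac{\partial}{\partial\phi_i}\cL_\infty)(\phi)=2v_j\int_{I_j^\phi}x(\realapprox{\phi}{\infty}(x)-\alpha)\d x$ if $i=j$ and $(\tfrac{\partial}{\partial\phi_i}\cL_\infty)(\phi)=2v_j\int_{I_j^\phi}(\realapprox{\phi}{\infty}(x)-\alpha)\d x$ if $i=\width+j$, where I used that $\{x\in[0,1]\colon w_jx+b_j>0\}=I_j^\phi$ and that $E$ differs from $[0,1]$ by a null set. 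Comparing with \eqref{eq:loss:gradient} in \cref{prop:limit:lr} identifies these expressions with $\cG_j(\phi)$ and $\cG_{\width+j}(\phi)$ respectively, which gives item \eqref{prop:loss:diff:wb:item2} and completes the argument.
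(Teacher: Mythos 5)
Your proposal is correct and follows essentially the same route as the paper's proof: the same exceptional set $E=\{x\in[0,1]\colon \w{\phi}_j x+\b{\phi}_j\neq 0\}$ (shown to have complement of at most one point via the hypothesis $|\phi_j|+|\phi_{\width+j}|>0$), the same local Lipschitz continuity of the integrand coming from that of $\sigma_\infty$, the same chain-rule computation of the pointwise derivative using $(\sigma_\infty)'=\indicator{(0,\infty)}$ off the origin, and the same appeal to \cref{cor:interchange} and \eqref{eq:loss:gradient}. Your slightly more explicit case distinction on whether $\w{\phi}_j=0$ is just an unpacking of the paper's one-line observation that $\#([0,1]\setminus E)\leq 1$.
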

\begin{proof} [Proof of \cref{prop:loss:diff:wb}]
Throughout this proof let $E \subseteq \R$ satisfy $E = \{ x \in [0,1] \colon \phi_j x + \phi_{\width + j } \not= 0 \}$. \Nobs that the assumption that $| \phi_j | + | \phi_{\width + j } | > 0$ implies that $\# ( [0,1] \backslash  E ) \leq 1$. This shows that $\int_{[0,1] \backslash E } 1 \d s = 0$.
Next \nobs that the fact that $\sigma_\infty$ is Lipschitz continuous ensures that
\begin{equation} \label{prop:loss:diff:wb:eq1}
    \R^{2 \width} \times [0,1] \ni (u_1, \ldots, u_{2 \width}, x) \mapsto \rbr[\big]{ \realapprox{(u_1, \ldots, u_{2\width } , \phi_{2 \width + 1 }, \ldots, \phi_{3 \width + 1 } )}{\infty} ( x ) - \alpha } ^2 \in \R
\end{equation} 
is locally Lipschitz continuous. In addition, \nobs that for all $x \in \R \backslash \{ 0 \}$ it holds that $\sigma_\infty$ is differentiable at $x$. Furthermore, \nobs that for all $x \in \R \backslash \{ 0 \}$ it holds that $(\sigma_\infty ) ' ( x ) = \indicator{(0, \infty)} ( x )$. This and the chain rule prove for all $x \in E$ that
\begin{equation} \label{prop:loss:diff:wb:eq2}
    \R \ni v \mapsto \rbr[\big]{ \realapprox{(\phi_1, \ldots, \phi_{j-1}, v, \phi_{j+1}, \ldots, \phi_{3 \width + 1 })}{\infty} ( x ) - \alpha}^2 \in \R
\end{equation}
is differentiable at $\phi_j$ and 
\begin{equation} \label{prop:loss:diff:wb:eq3}
    \tfrac{\partial }{\partial \phi_j} ( \realapprox{\phi}{\infty} ( x ) - \alpha ) ^2 = 2 \phi_{2 \width + j} x ( \realapprox{\phi}{\infty} ( x ) - \alpha ) \indicator{(0, \infty)} ( \phi_j x + \phi_{\width + j }) = 2 \phi_{2 \width + j} x ( \realapprox{\phi}{\infty} ( x ) - \alpha ) \indicator{I_j^\phi} ( x ).
\end{equation}
 Moreover, \nobs that the chain rule implies that for all $x \in E$ we have that
\begin{equation}
    \R \ni u \mapsto \rbr[\big]{ \realapprox{( \phi_1, \ldots, \phi_{\width + j -1}, u, \phi_{\width + j+1}, \ldots, \phi_{3 \width + 1 })}{\infty} ( x ) - \alpha}^2 \in \R
\end{equation}
is differentiable at $\phi_{\width + j}$ and
\begin{equation}
    \tfrac{\partial }{\partial \phi_{\width + j}} ( \realapprox{\phi}{\infty} ( x ) - \alpha ) ^2 = 2 \phi_{2 \width + j}  ( \realapprox{\phi}{\infty} ( x ) - \alpha ) \indicator{(0, \infty)} ( \phi_j x + \phi_{\width + j }) = 2 \phi_{2 \width + j}  ( \realapprox{\phi}{\infty} ( x ) - \alpha ) \indicator{I_j^\phi} ( x ).
\end{equation}
Combining \eqref{prop:loss:diff:wb:eq1}, \eqref{prop:loss:diff:wb:eq2}, \eqref{prop:loss:diff:wb:eq3}, \cref{cor:interchange}, and \eqref{eq:loss:gradient} hence establishes items \eqref{prop:loss:diff:wb:item1} and \eqref{prop:loss:diff:wb:item2}. The proof of \cref{prop:loss:diff:wb} is thus complete.
\end{proof}

\begin{lemma} \label{lem:loss:diff:degenerate}
Assume \cref{setting:const}, let $\phi = ( \phi_1, \ldots, \phi_{3 \width + 1 } ) \in \R^{3 \width + 1 }$, $j \in \{1, 2, \ldots, \width \}$, assume $\phi_j = \phi_{\width + j } = 0$, and assume that $\cL_\infty$ is differentiable at $\phi$. Then $(\frac{\partial}{\partial \phi_{ j }} \cL_\infty) ( \phi )  = \cG_{j } ( \phi ) = (\frac{\partial}{\partial \phi_{ \width + j }} \cL_\infty) ( \phi ) =  \cG_{ \width + j } ( \phi ) = 0$.
\end{lemma}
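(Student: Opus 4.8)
The plan is to establish the four claimed equalities by splitting into the two gradient-component identities and the two partial-derivative identities, and in each case to exploit the degeneracy hypothesis $\phi_j = \phi_{\width + j} = 0$.

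First I would dispose of the values of $\cG_j$ and $\cG_{\width + j}$. Since $\w{\phi}_j = \phi_j = 0$ and $\b{\phi}_j = \phi_{\width + j} = 0$, we have $\w{\phi}_j x + \b{\phi}_j = 0$ for every $x \in [0,1]$, and hence $I_j^\phi = \{ x \in [0,1] \colon \w{\phi}_j x + \b{\phi}_j > 0 \} = \emptyset$. Plugging $I_j^\phi = \emptyset$ into the formulas for $\cG_j(\phi)$ and $\cG_{\width + j}(\phi)$ in \eqref{eq:loss:gradient} of \cref{prop:limit:lr} immediately yields $\cG_j(\phi) = 2 \v{\phi}_j \int_{\emptyset} x (\realapprox{\phi}{\infty}(x) - \alpha) \d x = 0$ and likewise $\cG_{\width + j}(\phi) = 0$.

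Next I would handle the partial derivatives of $\cL_\infty$. The key observation is that perturbing $\phi_j$ (resp.\ $\phi_{\width + j}$) to a nonpositive value leaves the realization function unchanged on $[0,1]$: indeed, for every $v \in (-\infty, 0]$ and every $x \in [0,1]$ one has $v x + \phi_{\width + j} = v x \le 0$, so $\sigma_\infty(v x + \b{\phi}_j) = 0 = \sigma_\infty(\w{\phi}_j x + \b{\phi}_j)$, and therefore $\realapprox{\psi}{\infty}(x) = \realapprox{\phi}{\infty}(x)$ for all $x \in [0,1]$, where $\psi = (\phi_1, \ldots, \phi_{j-1}, v, \phi_{j+1}, \ldots, \phi_{3 \width + 1})$; consequently the map $\R \ni v \mapsto \cL_\infty(\phi_1, \ldots, \phi_{j-1}, v, \phi_{j+1}, \ldots, \phi_{3 \width + 1}) \in \R$ is constant on $(-\infty, 0]$, so its left derivative at $\phi_j = 0$ vanishes. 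Since $\cL_\infty$ is differentiable at $\phi$, this coordinate restriction is differentiable at $\phi_j$ with derivative $(\tfrac{\partial}{\partial \phi_j} \cL_\infty)(\phi)$, which must then coincide with the left derivative and hence be $0$. The argument for $\phi_{\width + j}$ is identical: for $v \le 0$ and $x \in [0,1]$ we have $\phi_j x + v = v \le 0$, so $\sigma_\infty(\w{\phi}_j x + v) = 0$, the map $v \mapsto \cL_\infty(\phi_1, \ldots, \phi_{\width + j - 1}, v, \phi_{\width + j + 1}, \ldots, \phi_{3 \width + 1})$ is constant on $(-\infty, 0]$, and differentiability of $\cL_\infty$ at $\phi$ forces $(\tfrac{\partial}{\partial \phi_{\width + j}} \cL_\infty)(\phi) = 0$. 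Combining the four computed values completes the proof.

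There is no serious obstacle here; the only point that genuinely uses the hypothesis (as opposed to being a routine computation) is the conversion of the one-sided ``constant on $(-\infty,0]$'' statement into a two-sided derivative, which is exactly where differentiability of $\cL_\infty$ at $\phi$ — rather than mere one-sided differentiability — is invoked.
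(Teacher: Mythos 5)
Your proposal is correct and follows essentially the same route as the paper: $I_j^\phi = \emptyset$ gives $\cG_j(\phi) = \cG_{\width+j}(\phi) = 0$ directly from \eqref{eq:loss:gradient}, and the observation that the realization function is unchanged under nonpositive perturbations of $\phi_j$ and $\phi_{\width+j}$ forces the one-sided (hence, by the differentiability assumption, the two-sided) partial derivatives to vanish. The only cosmetic difference is that the paper perturbs both coordinates simultaneously over $(-\infty,0]^2$ while you treat them one at a time, which changes nothing of substance.
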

\begin{proof} [Proof of \cref{lem:loss:diff:degenerate}]
Throughout this proof let $\varphi^h  = (\varphi_1^h, \ldots, \varphi_{3 \width + 1 } ^h ) \in \R^{3 \width + 1 }$, $h = (h_1, h_2) \in \R^2 $, satisfy for all $h = (h_1, h_2) \in \R^2 $, $k \in \{1, 2, \ldots, 3 \width + 1 \} \backslash \{j , \width + j \}$ that $\varphi_j^h = \phi_j + h_1$, $\varphi_{\width+j}^h = \phi_{\width + j } + h_2$, and $ \varphi_k^h = \phi_k $. \Nobs that the assumption that $\cL_\infty$ is differentiable at $\phi$ ensures that for all $i \in \{j , \width + j \}$ it holds that $\R \ni v \mapsto \cL_\infty ( \phi_1, \ldots, \phi_{i-1}, v, \phi_{i+1}, \ldots, \phi_{3 \width + 1 }) \in \R$ is differentiable at $\phi_i$. Furthermore, \nobs that for all $h \in (- \infty , 0 ] ^2 $, $x \in [0,1]$ it holds that $\realapprox{\varphi^h}{\infty} ( x ) = \realapprox{\phi}{\infty} ( x ) $. Hence, we have for all $h \in (- \infty , 0 ] ^2 $ that $\cL_\infty ( \varphi^h ) = \cL_\infty ( \phi ) $. This implies that $(\frac{\partial}{\partial \phi_{ j }} \cL_\infty) ( \phi ) = (\frac{\partial}{\partial \phi_{ \width + j }} \cL_\infty) ( \phi ) = 0$. Moreover, \nobs that the assumption that $\phi_j = \phi_{\width + j } = 0$ implies that $I_j^\phi = \emptyset$. This and \eqref{eq:loss:gradient} demonstrate that $\cG_j ( \phi ) = \cG_{\width + j } ( \phi ) = 0$. Hence, we obtain that $(\frac{\partial}{\partial \phi_{ j }} \cL_\infty) ( \phi ) = 0 = \cG_{j } ( \phi )$ and $(\frac{\partial}{\partial \phi_{ \width + j }} \cL_\infty) ( \phi ) = 0 = \cG_{ \width + j } ( \phi )$. This completes the proof of \cref{lem:loss:diff:degenerate}.
\end{proof}

\begin{cor} \label{cor:loss:differentiable}
Assume \cref{setting:const}, let $\phi = ( \phi_1, \ldots, \phi_{3 \width + 1 } ) \in \R^{3 \width + 1 }$, and assume that $\cL_\infty$ is differentiable at $\phi$. Then $(\nabla \cL_\infty)(\phi) = \cG ( \phi ) $.
\end{cor}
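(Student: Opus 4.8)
The plan is to prove the identity $(\nabla \cL_\infty)(\phi) = \cG(\phi)$ coordinatewise, partitioning the $3\width+1$ parameters according to which of the preceding lemmas governs them. Writing out $(\nabla\cL_\infty)(\phi) = ( ( \tfrac{\partial}{\partial \phi_1} \cL_\infty)(\phi), \ldots, ( \tfrac{\partial}{\partial \phi_{3\width+1}} \cL_\infty)(\phi) )$, it suffices to verify for every $k \in \{1, 2, \ldots, 3\width+1\}$ that $( \tfrac{\partial}{\partial \phi_k} \cL_\infty)(\phi) = \cG_k(\phi)$. First I would dispose of the ``read-out'' parameters, namely the indices $k \in \N \cap (2\width, 3\width+1]$ corresponding to $v_1, \ldots, v_\width$ and to $c$: here \cref{prop:loss:diff:vc} applies verbatim and already states $( \tfrac{\partial}{\partial \phi_k} \cL_\infty)(\phi) = \cG_k(\phi)$, so nothing further is required for these coordinates.

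Next I would handle, for each fixed $j \in \{1, 2, \ldots, \width\}$, the pair of ``inner'' parameters with indices $j$ (the weight) and $\width+j$ (the bias), by a dichotomy on whether $|\phi_j| + |\phi_{\width+j}| > 0$ or $\phi_j = \phi_{\width+j} = 0$. In the first case, for each $i \in \{j, \width+j\}$ the hypotheses of \cref{prop:loss:diff:wb} are met, whence $( \tfrac{\partial}{\partial \phi_i} \cL_\infty)(\phi) = \cG_i(\phi)$. In the second case, since $\cL_\infty$ is differentiable at $\phi$ by assumption, \cref{lem:loss:diff:degenerate} applies and gives $( \tfrac{\partial}{\partial \phi_j} \cL_\infty)(\phi) = \cG_j(\phi) = ( \tfrac{\partial}{\partial \phi_{\width+j}} \cL_\infty)(\phi) = \cG_{\width+j}(\phi) = 0$. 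Either way the two coordinates with indices $j$ and $\width+j$ are settled, and letting $j$ run through $\{1, 2, \ldots, \width\}$ covers all of $\{1, 2, \ldots, 2\width\}$.

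Since $\{1, 2, \ldots, 3\width+1\} = \{1, 2, \ldots, 2\width\} \cup (\N \cap (2\width, 3\width+1])$, combining the two steps yields $( \tfrac{\partial}{\partial \phi_k} \cL_\infty)(\phi) = \cG_k(\phi)$ for every $k$, i.e.\ $(\nabla\cL_\infty)(\phi) = \cG(\phi)$, which is the assertion. I do not expect a genuine obstacle: the real work has been front-loaded into \cref{prop:loss:diff:vc}, \cref{prop:loss:diff:wb}, and \cref{lem:loss:diff:degenerate}, and the corollary is merely their bookkeeping. The only points worth stating with care are that the case split $\{|\phi_j| + |\phi_{\width+j}| > 0\} \cup \{\phi_j = \phi_{\width+j} = 0\}$ is exhaustive (trivially so), and that the degenerate case is precisely where the global differentiability hypothesis on $\cL_\infty$ is consumed — via \cref{lem:loss:diff:degenerate} — so the statement of the corollary cannot drop that hypothesis. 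It is also implicit, and should be noted, that differentiability of $\cL_\infty$ at $\phi$ forces differentiability of each one-dimensional coordinate section of $\cL_\infty$ at the corresponding coordinate, which is the input the three lemmas ultimately rely on.
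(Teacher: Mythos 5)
Your proposal is correct and follows essentially the same route as the paper's proof: a coordinatewise verification that dispatches the indices in $\N \cap (2\width, 3\width+1]$ via \cref{prop:loss:diff:vc} and, for each $j \in \{1,\ldots,\width\}$, splits on $|\phi_j| + |\phi_{\width+j}| > 0$ versus $\phi_j = \phi_{\width+j} = 0$ to invoke \cref{prop:loss:diff:wb} or \cref{lem:loss:diff:degenerate}, respectively. Your closing remarks on where the differentiability hypothesis is consumed and on partial differentiability of the coordinate sections match the paper's opening observation, so nothing is missing.
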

\begin{proof} [Proof of \cref{cor:loss:differentiable}]
\Nobs that the assumption that $\cL_\infty$ is differentiable at $\phi$ ensures that for all $i \in \{1, 2, \ldots, 3 \width + 1 \}$ it holds that $\R \ni v \mapsto \cL_\infty ( \phi_1, \ldots, \phi_{i-1}, v, \phi_{i+1}, \ldots, \phi_{3 \width + 1 }) \in \R$ is differentiable at $\phi_i$.
Moreover, \nobs that \cref{prop:loss:diff:vc} proves for all $j \in \N \cap (2\width , 3 \width + 1 ] $ that $(\frac{\partial}{\partial \phi_{ j }} \cL_\infty) ( \phi ) = \cG_{ j } ( \phi )$.
In addition, \nobs that \cref{prop:loss:diff:wb} shows that for all $j \in \{1, 2, \ldots, \width \}$ with $| \phi_j | + | \phi_{\width + j } | > 0$ it holds that $(\frac{\partial}{\partial \phi_{ j }} \cL_\infty) ( \phi ) = \cG_{j } ( \phi )$ and $(\frac{\partial}{\partial \phi_{ \width + j }} \cL_\infty) ( \phi ) = \cG_{ \width + j } ( \phi )$. 
On the other hand, \nobs that \cref{lem:loss:diff:degenerate} ensures that for all $j \in \{1, 2, \ldots, \width \}$ with $\phi_j = \phi_{\width + j } = 0$ we have that $(\frac{\partial}{\partial \phi_{ j }} \cL_\infty) ( \phi ) = 0 = \cG_{j } ( \phi )$ and $(\frac{\partial}{\partial \phi_{ \width + j }} \cL_\infty) ( \phi ) = 0 = \cG_{ \width + j } ( \phi )$. This demonstrates that $(\nabla \cL_\infty ) ( \phi ) = \cG ( \phi )$.
The proof of \cref{cor:loss:differentiable} is thus complete.
\end{proof}

\subsection{Upper bounds for gradients of the risk functions}

\begin{lemma} \label{lem:gradient:est}
Assume \cref{setting:const} and let $\phi \in \R^{3 \width + 1}$. Then
\begin{equation} 
    \norm{ \cG ( \phi ) } ^2 \leq ( 8 \norm{ \phi } ^2 + 4 ) \cL_\infty ( \phi ).
\end{equation}
\end{lemma}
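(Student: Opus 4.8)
The plan is to start from the closed-form expression for the generalized gradient $\cG(\phi)$ provided by item~\eqref{prop:limit:lr:5} of \cref{prop:limit:lr}, i.e.\ the four formulas in \eqref{eq:loss:gradient}, and to estimate each of the $3\width+1$ coordinates of $\cG(\phi)$ separately by means of the Cauchy--Schwarz inequality. Write $\phi = (w_1,\ldots,w_\width,b_1,\ldots,b_\width,v_1,\ldots,v_\width,c) \in \R^{3\width+1}$. By \eqref{eq:loss:gradient}, each coordinate of $\cG(\phi)$ equals $2$ times an integral, over a measurable subset of $[0,1]$, of a product of a ``weight function'' with $\realapprox{\phi}{\infty}(\cdot)-\alpha$. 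Hence Cauchy--Schwarz, together with the inclusion $I_j^\phi \subseteq [0,1]$ and the identity $\cL_\infty(\phi) = \int_0^1 (\realapprox{\phi}{\infty}(y)-\alpha)^2 \d y$ from \cref{setting:const}, yields for every $j \in \{1,2,\ldots,\width\}$ that
\begin{align*}
\abs{\cG_j(\phi)}^2 &\leq 4 v_j^2 \Bigl( \int_0^1 x^2 \d x \Bigr) \cL_\infty(\phi) = \tfrac{4}{3} v_j^2 \cL_\infty(\phi), \\
\abs{\cG_{\width + j}(\phi)}^2 &\leq 4 v_j^2 \cL_\infty(\phi), \\
\abs{\cG_{2\width + j}(\phi)}^2 &\leq 4 \Bigl( \int_0^1 \bigl(\sigma_\infty(w_j x + b_j)\bigr)^2 \d x \Bigr) \cL_\infty(\phi), \\
\abs{\cG_{3 \width + 1}(\phi)}^2 &\leq 4 \cL_\infty(\phi).
\end{align*}

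The only additional ingredient needed is an elementary pointwise bound on the weight function appearing in the third estimate: for $x \in [0,1]$ one has $0 \leq \sigma_\infty(w_j x + b_j) = \max\{w_j x + b_j, 0\} \leq \abs{w_j} + \abs{b_j}$, so that $\bigl(\sigma_\infty(w_j x + b_j)\bigr)^2 \leq (\abs{w_j} + \abs{b_j})^2 \leq 2(w_j^2 + b_j^2)$, and therefore $\abs{\cG_{2\width + j}(\phi)}^2 \leq 8(w_j^2 + b_j^2)\,\cL_\infty(\phi)$.

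Finally I would sum the $3\width+1$ inequalities and collect terms, which gives
\[
\norm{\cG(\phi)}^2 \leq \Bigl[\, 8 \smallsum_{j=1}^\width (w_j^2 + b_j^2) + \tfrac{16}{3} \smallsum_{j=1}^\width v_j^2 + 4 \,\Bigr] \cL_\infty(\phi).
\]
Since $\tfrac{16}{3} < 8$ and $\norm{\phi}^2 = \sum_{j=1}^\width (w_j^2 + b_j^2) + \sum_{j=1}^\width v_j^2 + c^2$, the bracket is at most $8\norm{\phi}^2 + 4$, which is the claimed inequality. I do not expect a genuine obstacle here; the only point that needs a little care is keeping all numerical constants below the prescribed thresholds $8$ and $4$, and the crude estimates $\int_0^1 x^2 \d x = \tfrac13 \le 1$ and $(\abs{w_j}+\abs{b_j})^2 \le 2(w_j^2+b_j^2)$ leave ample room for this.
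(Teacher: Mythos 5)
Your proposal is correct and follows essentially the same route as the paper: bound each of the $3\width+1$ coordinates of $\cG(\phi)$ separately via Cauchy--Schwarz (the paper uses Jensen's inequality to the same effect), use $I_j^\phi \subseteq [0,1]$ and the elementary bound $(\sigma_\infty(w_jx+b_j))^2 \le 2(w_j^2+b_j^2)$, and sum. The only cosmetic difference is that you compute $\int_0^1 x^2 \d x = \tfrac13$ where the paper simply bounds $\abs{x}\le 1$, giving you the marginally sharper intermediate constant $\tfrac{16}{3}$ in place of $8$ for the $v_j$ terms, which is then absorbed into the same final bound.
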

\begin{proof} [Proof of \cref{lem:gradient:est}]
Throughout this proof let $w_1,  \ldots, w_{\width}, b_1,  \ldots, b_{\width}, v_1,  \ldots, v_{\width}, c \in \R$ satisfy $\phi  = (w_1, \ldots, w_{\width}, b_1, \ldots, b_{\width}, v_1, \ldots, \allowbreak v_{\width}, c)$.
\Nobs that Jensen's inequality implies that
\begin{equation} \label{lem:grad:est:eq1}
    \rbr*{  \int_0^1 | \realapprox{\phi}{\infty} ( x ) - \alpha| \d x  } ^{\! 2} \leq \int_0^1 \rbr{  \realapprox{\phi}{\infty} ( x ) - \alpha } ^2 \d x = \cL_\infty ( \phi ).
\end{equation}
This and \eqref{eq:loss:gradient} ensure that for all $j \in \{1,2, \ldots, \width \}$ we have that
\begin{equation} \label{eq:lem:gradient:est1}
    \begin{split}
        | \cG_j( \phi) | ^2 &= 4 (v_j) ^2 \rbr*{ \int_{I_j^\phi} x ( \realapprox{\phi}{\infty} (x) - \alpha ) \d x  } ^{\! 2}
        \leq 4 (v_j) ^2 \rbr*{ \int_{I_j^\phi} |x|  | \realapprox{\phi}{\infty} (x) - \alpha  | \d x  } ^{ \! 2} \\
        &\leq 4 (v_j) ^2 \rbr*{ \int_0^1   | \realapprox{\phi}{\infty} (x) - \alpha  | \d x  } ^{\! 2} \leq 4 (v_j)^2 \cL_\infty (\phi).
    \end{split}
\end{equation}
In addition, \nobs that \eqref{eq:loss:gradient} and \eqref{lem:grad:est:eq1} assure that for all $j \in \{1,2, \ldots, \width \}$ it holds that
\begin{equation} \label{eq:lem:gradient:est2}
\begin{split}
         | \cG_{\width + j}( \phi) |^2 &= 4 (v_j) ^2 \rbr*{  \int_{I_j^\phi} (\realapprox{\phi}{\infty} (x) - \alpha  ) \d x  } ^{\! 2} \\
         &\leq 4 (v_j) ^2  \rbr*{ \int_0^1   |\realapprox{\phi}{\infty} (x) - \alpha  | \d x  } ^{\! 2} \leq 4 (v_j)^2 \cL_\infty (\phi).
         \end{split}
\end{equation}
Furthermore, \nobs that for all $x \in [0,1]$, $j \in \{1,2, \ldots, \width \}$ it holds that $| \sigma_\infty ( w_j  x + b_j ) | ^2 \leq ( | w_j | + | b_j | ) ^2 \leq 2 ( (w_j) ^2 + (b_j) ^2)$. Combining this and \eqref{eq:loss:gradient} demonstrates for all $j \in \{1,2, \ldots, \width \}$ that
\begin{equation} \label{eq:lem:gradient:est3}
\begin{split}
    | \cG_{2 \width + j} ( \phi ) | ^2 &= 4 \rbr*{  \int_0^1 [\sigma_\infty(w_j x + b_j )] ( \realapprox{\phi}{\infty}(x) - \alpha ) \d x  } ^{\! 2 } \\ 
    &\leq 4 \int_0^1 | \sigma_\infty ( w_j x + b_j ) | ^2 ( \realapprox{\phi}{\infty}(x) - \alpha ) ^2 \d x
    \leq 8 \br*{  (w_j) ^2 + (b_j) ^2 } \cL_\infty (\phi).
    \end{split}
\end{equation}
Finally, \nobs that \eqref{eq:loss:gradient} and \eqref{lem:grad:est:eq1} show that
\begin{equation} \label{eq:lem:gradient:est4}
    | \cG _ { 3 \width + 1 } ( \phi ) | ^2 = 4 \rbr*{ \int_0^1 (\realapprox{\phi}{\infty} (x) - \alpha  ) \d x  } ^{\! 2 } \leq 4 \cL_\infty ( \phi ).
\end{equation}
Combining \eqref{eq:lem:gradient:est1}--\eqref{eq:lem:gradient:est4} yields
\begin{equation}
\begin{split}
    \norm{ \cG ( \phi ) } ^2 &\leq  \br*{   \smallsum_{j=1}^\width \rbr*{  4 (v_j) ^2 + 4 (v_j) ^2 + 8 (w_j) ^2 + 8 (b_j) ^2  } } \cL_\infty ( \phi ) + 4 \cL_\infty ( \phi ) \\
    &\leq (8 \norm{ \phi } ^2 + 4) \cL_\infty ( \phi ).
    \end{split}
\end{equation}
The proof of \cref{lem:gradient:est} is thus complete.
\end{proof}

\begin{cor} \label{cor:g:bounded} 
Assume \cref{setting:const} and let $K \subseteq \R^{3 \width + 1}$ be a compact set. Then $\sup_{\phi \in K} \norm{ \cG ( \phi ) } < \infty$.
\end{cor}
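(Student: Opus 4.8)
The plan is to reduce the claim to the a priori estimate already established in \cref{lem:gradient:est} together with two elementary boundedness facts on the compact set $K$. Concretely, \cref{lem:gradient:est} yields for all $\phi \in \R^{3\width+1}$ that $\norm{\cG(\phi)}^2 \le (8\norm{\phi}^2 + 4)\,\cL_\infty(\phi)$, so it suffices to show that both $\phi \mapsto \norm{\phi}^2$ and $\phi \mapsto \cL_\infty(\phi)$ are bounded on $K$.

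First I would note that $\R^{3\width+1}\ni\phi \mapsto \norm{\phi}^2 \in \R$ is continuous, and hence, since $K$ is compact, the extreme value theorem gives $\sup_{\phi \in K}\norm{\phi}^2 < \infty$. Second, for the risk term I would invoke the pointwise bound obtained in the proof of \cref{prop:limit:lr} (display \eqref{proof:limit:lr:eq1}): writing $\phi = (w_1,\dots,w_{\width},b_1,\dots,b_{\width},v_1,\dots,v_{\width},c)$, one has for all $x \in [0,1]$ that $\abs{\realapprox{\phi}{\infty}(x) - \alpha} \le \abs{\alpha} + \abs{c} + \sum_{j=1}^{\width}\abs{v_j}(\abs{w_j}+\abs{b_j}+1)$, and therefore $\cL_\infty(\phi) = \int_0^1 (\realapprox{\phi}{\infty}(y)-\alpha)^2\,\d y \le \bigl(\abs{\alpha} + \abs{c} + \sum_{j=1}^{\width}\abs{v_j}(\abs{w_j}+\abs{b_j}+1)\bigr)^2$. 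The right-hand side is a continuous function of $\phi$, so again by compactness of $K$ it is bounded on $K$, giving $\sup_{\phi \in K}\cL_\infty(\phi) < \infty$.

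Combining these two bounds with \cref{lem:gradient:est} then yields $\sup_{\phi \in K}\norm{\cG(\phi)}^2 \le \bigl(8\sup_{\phi\in K}\norm{\phi}^2 + 4\bigr)\bigl(\sup_{\phi\in K}\cL_\infty(\phi)\bigr) < \infty$, which completes the argument. There is no real obstacle here: the only point requiring a moment of care is that $\cL_\infty$ (unlike $\cL_r$ for finite $r$) is not asserted to be $C^1$, so I avoid any differentiability claim and instead bound it directly via the explicit estimate \eqref{proof:limit:lr:eq1}; alternatively one could simply remark that $\phi \mapsto \cL_\infty(\phi)$ is continuous because $\sigma_\infty$ is Lipschitz and the integrand depends continuously on $\phi$ uniformly in $x \in [0,1]$.
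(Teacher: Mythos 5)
Your argument is correct and matches the paper's proof: both reduce the claim to \cref{lem:gradient:est} and then bound $\cL_\infty$ on the compact set $K$ (the paper simply cites continuity of $\cL_\infty$, which you also note as an alternative to your explicit estimate via \eqref{proof:limit:lr:eq1}).
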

\begin{proof} [Proof of \cref{cor:g:bounded}]
\Nobs that the fact that $\cL_\infty$ is continuous ensures that $\sup_{\phi \in K} \cL_\infty ( \phi ) < \infty$. Combining this with \cref{lem:gradient:est} completes the proof of \cref{cor:g:bounded}.
\end{proof}

\subsection{Properties of Lyapunov type functions} \label{subsection:lyapunov}

\begin{prop} \label{prop:lyapunov:norm} 
Assume \cref{setting:const} and let $\phi \in \R^{3 \width + 1}$. Then 
\begin{equation}
    \norm{ \phi } ^2 \leq V(\phi) \leq 3 \norm{ \phi } ^2 + 8 \alpha ^2.
\end{equation}
\end{prop}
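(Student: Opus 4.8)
The plan is to prove the two inequalities separately, both by entirely elementary manipulations, with no analytic input needed beyond the definitions in \cref{setting:const}.

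\textbf{Lower bound.} I would simply observe that, by the definition of $V$ in \cref{setting:const}, we have $V(\phi) = \norm{\phi}^2 + (\c{\phi} - 2\alpha)^2$, and since $(\c{\phi} - 2\alpha)^2 \geq 0$ this immediately yields $V(\phi) \geq \norm{\phi}^2$. Nothing further is required here.

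\textbf{Upper bound.} The one genuine step is the Young-type estimate $(a - b)^2 \leq 2a^2 + 2b^2$ (equivalently, $2ab \leq a^2 + b^2$), applied with $a = \c{\phi}$ and $b = 2\alpha$, which gives $(\c{\phi} - 2\alpha)^2 \leq 2(\c{\phi})^2 + 2(2\alpha)^2 = 2(\c{\phi})^2 + 8\alpha^2$. Next I would recall that $\c{\phi} = \phi_{3\width+1}$ is precisely the last coordinate of $\phi$, so that $(\c{\phi})^2 = \abs{\phi_{3\width+1}}^2 \leq \sum_{i=1}^{3\width+1} \abs{\phi_i}^2 = \norm{\phi}^2$, i.e. a single coordinate is dominated by the full Euclidean norm. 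Combining these two observations with the definition of $V$ then gives $V(\phi) = \norm{\phi}^2 + (\c{\phi} - 2\alpha)^2 \leq \norm{\phi}^2 + 2\norm{\phi}^2 + 8\alpha^2 = 3\norm{\phi}^2 + 8\alpha^2$, as claimed.

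I do not anticipate any obstacle: the statement merely records that $V$ and $\norm{\cdot}^2$ are comparable up to the additive constant $8\alpha^2$, and the only points to be careful about are keeping the factor $2$ in the Young-type inequality (which produces the constant $3$ on the right-hand side) and remembering that $\c{\phi}$ is literally the $(3\width+1)$-th component of $\phi$, so its square is bounded by $\norm{\phi}^2$.
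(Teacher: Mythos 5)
Your argument is correct and coincides with the paper's own proof: both obtain the lower bound from the nonnegativity of $(\c{\phi}-2\alpha)^2$ and the upper bound from $(x-y)^2\leq 2(x^2+y^2)$ together with $(\c{\phi})^2\leq\norm{\phi}^2$. Nothing to add.
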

\begin{proof} [Proof of \cref{prop:lyapunov:norm}]
\Nobs that 
$V(\phi) = \norm{ \phi } ^2 + ( \c{\phi} - 2 \alpha ) ^2 \geq \norm{ \phi } ^2$.
Furthermore, \nobs that the fact that $\forall \, x , y \in \R \colon (x - y )^2 \leq 2(x^2 + y^2)$ establishes that
\begin{equation}
    V (\phi) \leq \norm{ \phi } ^2 + 2 (\c{\phi}) ^2 + 8 \alpha ^2 \leq 3 \norm{ \phi } ^2 + 8 \alpha ^2.
\end{equation}
This completes the proof of \cref{prop:lyapunov:norm}.
\end{proof}

\begin{prop} \label{prop:v:gradient}
Assume \cref{setting:const} and let $\phi , \psi \in \R^{3 \width + 1}$. Then
\begin{equation} \label{eq:prop:v:gradient}
    (\nabla V)(\phi) - (\nabla V)(\psi) = 2(\phi - \psi ) + \rbr[\big]{ 0, 0, \ldots, 0, 2(\c{\phi} - \c{\psi}) }.
\end{equation}
\end{prop}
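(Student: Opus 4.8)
The plan is to compute the gradient of $V$ directly, exploiting that, by \cref{setting:const}, $V$ is nothing but the quadratic polynomial
\begin{equation*}
  V(\phi) = \norm{\phi}^2 + (\c{\phi} - 2\alpha)^2 = \Big(\sum_{i=1}^{3\width+1}\abs{\phi_i}^2\Big) + (\phi_{3\width+1} - 2\alpha)^2
\end{equation*}
in the coordinates $\phi_1, \ldots, \phi_{3\width+1}$, where we used that $\c{\phi} = \phi_{3\width+1}$.

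First I would differentiate coordinatewise. For every index $i \in \{1, 2, \ldots, 3\width\}$ only the summand $\abs{\phi_i}^2$ depends on $\phi_i$, so $(\tfrac{\partial}{\partial\phi_i} V)(\phi) = 2\phi_i$. For the last index $i = 3\width+1$ both the summand $\abs{\phi_{3\width+1}}^2$ and the summand $(\phi_{3\width+1} - 2\alpha)^2$ contribute, and the chain rule yields $(\tfrac{\partial}{\partial\phi_{3\width+1}} V)(\phi) = 2\phi_{3\width+1} + 2(\phi_{3\width+1} - 2\alpha) = 2\c{\phi} + 2(\c{\phi} - 2\alpha)$. Assembling these partial derivatives shows that $(\nabla V)(\phi) = 2\phi + (0, 0, \ldots, 0, 2(\c{\phi} - 2\alpha))$, and the analogous identity holds with $\psi$ in place of $\phi$.

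Finally I would subtract the two identities: $(\nabla V)(\phi) - (\nabla V)(\psi) = 2(\phi - \psi) + (0, 0, \ldots, 0, 2(\c{\phi} - 2\alpha) - 2(\c{\psi} - 2\alpha))$, and observe that in the last coordinate the constant $-2\alpha$ cancels, leaving $2(\c{\phi} - \c{\psi})$, which is exactly \eqref{eq:prop:v:gradient}. There is essentially no obstacle here, since $V$ is a polynomial; the only point requiring a moment of care is to remember that the last coordinate $\phi_{3\width+1}$ appears in two summands of $V$ and hence receives a double contribution in the gradient.
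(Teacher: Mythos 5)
Your proposal is correct and follows exactly the paper's argument: the paper likewise records the identity $(\nabla V)(\varphi) = 2\varphi + (0,0,\ldots,0,2(\c{\varphi} - 2\alpha))$ and subtracts the two instances, with your version merely spelling out the coordinatewise differentiation in more detail. No issues.
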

\begin{proof} [Proof of \cref{prop:v:gradient}]
\Nobs that for all $\varphi \in \R^{3 \width + 1}$ it holds that
\begin{equation}
    (\nabla V ) ( \varphi) = 2 \varphi + \rbr[\big]{  0, 0, \ldots, 0, 2(\c{\varphi} - 2 \alpha ) } .
\end{equation}
This establishes \eqref{eq:prop:v:gradient}. The proof of \cref{prop:v:gradient} is thus complete.
\end{proof}

\begin{prop} \label{prop:lyapunov:gradient} 
Assume \cref{setting:const}, let $\cV_1, \cV_2 \in  C ( \R^{3 \width + 1 } , \R )$ satisfy for all $\phi \in \R^{3 \width + 1}$ that
$\cV_1(\phi) = (\c{\phi}) ^2 - 2 \alpha \c{\phi} + \sum_{j=1}^\width (\v{\phi}_j ) ^2 $
and
$\cV_2(\phi) =  (\c{\phi})^2 - 2 \alpha \c{\phi} + \sum_{j=1}^\width \br[\big]{   (\w{\phi}_j)^2 + (\b{\phi}_j)^2 } $,
and let $\phi \in \R^{3 \width + 1}$. Then
\begin{enumerate} [(i)]
    \item \label{prop:lyapunov:gradient:item1} it holds that $\langle (\nabla \cV_1) ( \phi) , \cG(\phi) \rangle  = 4 \cL_\infty (\phi)$,
    \item \label{prop:lyapunov:gradient:item2} it holds that $\langle (\nabla \cV_2) ( \phi) , \cG(\phi) \rangle  = 4 \cL_\infty (\phi)$, and
    \item \label{prop:lyapunov:gradient:item3} it holds that $\langle (\nabla V) ( \phi) , \cG(\phi) \rangle = 8 \cL_\infty (\phi)$.
\end{enumerate} 
\end{prop}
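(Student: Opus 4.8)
The plan is to compute all three inner products directly, reading the gradients of $\cV_1$, $\cV_2$ off their defining polynomials and then pairing with the explicit formulas for $\cG$ in \eqref{eq:loss:gradient} from \cref{prop:limit:lr}. First I would record that $(\nabla \cV_1)(\phi)$ has the entries $2\v{\phi}_j$ in the $v$-block, $2\c{\phi}-2\alpha$ in the last coordinate, and $0$ in the $w$- and $b$-blocks, while $(\nabla \cV_2)(\phi)$ has the entries $2\w{\phi}_j$ and $2\b{\phi}_j$ in the $w$- and $b$-blocks, $2\c{\phi}-2\alpha$ in the last coordinate, and $0$ in the $v$-block. The one structural identity used throughout is that by the definition of the realization function $\smallsum_{j=1}^\width \v{\phi}_j\,\sigma_\infty(\w{\phi}_j x + \b{\phi}_j) = \realapprox{\phi}{\infty}(x) - \c{\phi}$ for all $x \in [0,1]$.

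For item \eqref{prop:lyapunov:gradient:item1} I would pair $(\nabla \cV_1)(\phi)$ with $\cG(\phi)$ using the formulas for $\cG_{2\width+j}(\phi)$ and $\cG_{3\width+1}(\phi)$, pull out a factor $4$, and interchange the finite sum with the integral to obtain $4\int_0^1\big[\smallsum_{j=1}^\width \v{\phi}_j\,\sigma_\infty(\w{\phi}_j x + \b{\phi}_j) + (\c{\phi}-\alpha)\big](\realapprox{\phi}{\infty}(x)-\alpha)\d x$. Substituting the identity above, the bracket collapses to $\realapprox{\phi}{\infty}(x)-\alpha$, so the integral equals $4\int_0^1(\realapprox{\phi}{\infty}(x)-\alpha)^2\d x = 4\cL_\infty(\phi)$.

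For item \eqref{prop:lyapunov:gradient:item2} the same strategy works, with one extra step to deal with the integration domain. Pairing $(\nabla \cV_2)(\phi)$ with $\cG(\phi)$ via the formulas for $\cG_j(\phi)$, $\cG_{\width+j}(\phi)$, and $\cG_{3\width+1}(\phi)$ yields $4\smallsum_{j=1}^\width \v{\phi}_j\int_{I_j^\phi}(\w{\phi}_j x + \b{\phi}_j)(\realapprox{\phi}{\infty}(x)-\alpha)\d x + 4(\c{\phi}-\alpha)\int_0^1(\realapprox{\phi}{\infty}(x)-\alpha)\d x$. The key observation here — the only mildly delicate point in the whole proof — is that on $I_j^\phi$ we have $\w{\phi}_j x + \b{\phi}_j = \sigma_\infty(\w{\phi}_j x + \b{\phi}_j)$ whereas on $[0,1]\setminus I_j^\phi$ the rectifier vanishes, so $\int_{I_j^\phi}(\w{\phi}_j x + \b{\phi}_j)(\realapprox{\phi}{\infty}(x)-\alpha)\d x = \int_0^1 \sigma_\infty(\w{\phi}_j x + \b{\phi}_j)(\realapprox{\phi}{\infty}(x)-\alpha)\d x$. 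After this replacement the expression is identical to the one in item \eqref{prop:lyapunov:gradient:item1} and again equals $4\cL_\infty(\phi)$.

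For item \eqref{prop:lyapunov:gradient:item3} I would avoid any new computation: comparing defining formulas gives $V(\phi) = \cV_1(\phi) + \cV_2(\phi) + 4\alpha^2$, hence $(\nabla V)(\phi) = (\nabla \cV_1)(\phi) + (\nabla \cV_2)(\phi)$, and adding items \eqref{prop:lyapunov:gradient:item1} and \eqref{prop:lyapunov:gradient:item2} gives $\langle (\nabla V)(\phi),\cG(\phi)\rangle = 8\cL_\infty(\phi)$. (One could instead invoke \cref{prop:v:gradient} to write $(\nabla V)(\phi) = 2\phi + (0,\dots,0,2(\c{\phi}-2\alpha))$ and expand $\langle\phi,\cG(\phi)\rangle$ coordinatewise, but the decomposition route is shorter.) I do not anticipate a real obstacle here: beyond the domain-swap identity in item \eqref{prop:lyapunov:gradient:item2}, everything is bookkeeping with the explicit gradient formulas, the finite-sum/integral interchange (already available), and the telescoping $\realapprox{\phi}{\infty}(x)-\c{\phi} + \c{\phi}-\alpha = \realapprox{\phi}{\infty}(x)-\alpha$.
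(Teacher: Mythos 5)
Your proposal is correct and follows essentially the same route as the paper's own proof: compute $(\nabla \cV_1)(\phi)$ and $(\nabla \cV_2)(\phi)$ explicitly, pair with the formulas in \eqref{eq:loss:gradient}, use $\smallsum_{j=1}^\width \v{\phi}_j \sigma_\infty(\w{\phi}_j x + \b{\phi}_j) + \c{\phi} - \alpha = \realapprox{\phi}{\infty}(x) - \alpha$ together with the observation that $\int_{I_j^\phi}(\w{\phi}_j x + \b{\phi}_j)(\cdot)\d x = \int_0^1 \sigma_\infty(\w{\phi}_j x + \b{\phi}_j)(\cdot)\d x$, and obtain item \eqref{prop:lyapunov:gradient:item3} from the decomposition $V = \cV_1 + \cV_2 + 4\alpha^2$. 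All steps, including the sign bookkeeping in that decomposition, match the paper.
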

\begin{proof} [Proof of \cref{prop:lyapunov:gradient}]
Throughout this proof let $w_1,  \ldots, w_{\width}, b_1,  \ldots, b_{\width}, v_1,  \ldots, v_{\width}, c \in \R$ satisfy $\phi  = (w_1, \ldots, w_{\width}, b_1, \ldots, b_{\width}, v_1, \ldots, \allowbreak v_{\width}, c)$.
\Nobs that
\begin{equation}
    (\nabla \cV_1) ( \phi ) = 2 \rbr[\big]{  \underbrace{0,0, \ldots, 0}_{2 \width}, v_1, v_2, \ldots, v_{\width},  c - \alpha } .
\end{equation} 
This and \eqref{eq:loss:gradient} imply that
\begin{equation}
    \begin{split}
        &\langle (\nabla \cV_1) ( \phi) , \cG(\phi) \rangle \\ 
        &= 4 \br[\Bigg]{ \sum_{j=1}^\width v_j \int_0^1 [\sigma_\infty (w_j x + b_j) ] ( \realapprox{\phi}{\infty}(x) - \alpha) \d x } + 4(c - \alpha) \int_0^1 (\realapprox{\phi}{\infty} (x) - \alpha ) \d x \\
        & = 4 \int_0^1 \rbr*{ \br*{ \smallsum_{j=1}^\width v_j \sigma_\infty ( w_j x + b_j) } + c - \alpha  } (\realapprox{\phi}{\infty} (x) - \alpha ) \d x \\
        &= 4 \int_0^1 (\realapprox{\phi}{\infty} (x) - \alpha )^2 \d x = 4 \cL_\infty ( \phi ).
    \end{split}
\end{equation}
This proves item \eqref{prop:lyapunov:gradient:item1}.
Next \nobs that
\begin{equation}
    (\nabla \cV_2 ) ( \phi ) = 2 \rbr[\big]{  w_1, w_2, \ldots, w_{\width}, b_1, b_2, \ldots, b_{\width}, \underbrace{0, 0, \ldots, 0}_{\width }, c -\alpha }.
\end{equation}
Combining this and \eqref{eq:loss:gradient} demonstrates that
\begin{equation}
    \begin{split}
        &\langle (\nabla \cV_2) ( \phi) , \cG(\phi) \rangle \\
        &= 4 \br[\Bigg]{  \sum_{j=1}^\width v_j \int_{I_j^\phi} (w_j x + b_j) (\realapprox{\phi}{\infty} (x) - \alpha ) \d x } + 4(c - \alpha) \int_0^1 (\realapprox{\phi}{\infty} (x) - \alpha ) \d x \\
        &= 4 \br[\Bigg]{ \sum_{j=1}^\width v_j \int_0^1 \br{\sigma_\infty(w_j x + b_j) } ( \realapprox{\phi}{\infty}(x) - \alpha) \d x} + 4(c - \alpha) \int_0^1 \rbr{\realapprox{\phi}{\infty} (x) - \alpha } \d x \\
        & = 4 \int_0^1 \rbr*{ \br*{ \smallsum_{j=1}^\width v_j \sigma_\infty ( w_j  x + b_j) } + c - \alpha  } (\realapprox{\phi}{\infty} (x) - \alpha ) \d x \\
        &= 4 \int_0^1 (\realapprox{\phi}{\infty} (x) - \alpha )^2 \d x = 4 \cL_\infty ( \phi ).
    \end{split}
\end{equation}
This establishes item \eqref{prop:lyapunov:gradient:item2}.
Furthermore, \nobs that $\cV_1 ( \phi ) + \cV _2 ( \phi ) = V ( \phi ) - 4 \alpha ^2$. This ensures that $(\nabla \cV_1 ) ( \phi ) + ( \nabla \cV_2 ) ( \phi ) = ( \nabla V ) ( \phi)$, which proves item \eqref{prop:lyapunov:gradient:item3}.
The proof of \cref{prop:lyapunov:gradient} is thus complete.
\end{proof}

\begin{cor} \label{cor:critical:points} 
Assume \cref{setting:const} and let $\phi \in \R^{3 \width + 1}$. Then it holds that $ \norm{ \cG(\phi) } = 0$ if and only if $\cL_\infty ( \phi ) = 0 $.
\end{cor}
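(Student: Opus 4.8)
The plan is to prove the two implications separately, each by invoking one of the results already established in this subsection and the previous one.

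For the implication ``$\cL_\infty(\phi) = 0 \implies \norm{\cG(\phi)} = 0$'', I would simply apply \cref{lem:gradient:est}: since $\norm{\cG(\phi)}^2 \leq (8\norm{\phi}^2 + 4)\cL_\infty(\phi)$ and the right-hand side vanishes when $\cL_\infty(\phi) = 0$, we get $\norm{\cG(\phi)} = 0$. Note that here the finiteness of the prefactor $8\norm{\phi}^2 + 4$ for each fixed $\phi$ is all that is needed.

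For the reverse implication ``$\norm{\cG(\phi)} = 0 \implies \cL_\infty(\phi) = 0$'', the idea is to test the gradient $\cG(\phi)$ against the gradient of the Lyapunov function $V$. By the Cauchy--Schwarz inequality, $\abs{\langle (\nabla V)(\phi), \cG(\phi)\rangle} \leq \norm{(\nabla V)(\phi)}\,\norm{\cG(\phi)} = 0$, so $\langle (\nabla V)(\phi), \cG(\phi)\rangle = 0$. On the other hand, item \eqref{prop:lyapunov:gradient:item3} in \cref{prop:lyapunov:gradient} asserts that $\langle (\nabla V)(\phi), \cG(\phi)\rangle = 8\cL_\infty(\phi)$. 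Combining these two facts yields $8\cL_\infty(\phi) = 0$ and hence $\cL_\infty(\phi) = 0$.

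Since both directions are immediate consequences of previously proven statements, there is no real obstacle here; the corollary is essentially a bookkeeping combination of the a priori gradient bound (\cref{lem:gradient:est}) and the Lyapunov identity (\cref{prop:lyapunov:gradient}). The only point worth stating carefully is that the reverse direction genuinely requires the extra term $(\c{\phi} - 2\alpha)^2$ built into $V$ --- this is precisely what makes the inner product in item \eqref{prop:lyapunov:gradient:item3} equal to a positive multiple of $\cL_\infty(\phi)$ rather than something that could vanish while $\cL_\infty(\phi) > 0$.
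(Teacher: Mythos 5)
Your proof is correct. The direction $\norm{\cG(\phi)}=0 \Rightarrow \cL_\infty(\phi)=0$ is exactly the paper's argument: pair $\cG(\phi)$ with $(\nabla V)(\phi)$ and invoke item \eqref{prop:lyapunov:gradient:item3} of \cref{prop:lyapunov:gradient}. For the converse direction you deviate from the paper: you cite the a priori bound $\norm{\cG(\phi)}^2 \leq (8\norm{\phi}^2+4)\cL_\infty(\phi)$ from \cref{lem:gradient:est}, whereas the paper argues more directly that $\cL_\infty(\phi)=\int_0^1(\realapprox{\phi}{\infty}(x)-\alpha)^2\d x=0$ together with continuity of $\realapprox{\phi}{\infty}$ forces $\realapprox{\phi}{\infty}(x)=\alpha$ for all $x\in[0,1]$, so that every integrand in \eqref{eq:loss:gradient} vanishes. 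Both routes are one line long and both rest on material already established in \cref{section:risk:regularity}; your version is arguably slicker since \cref{lem:gradient:est} packages the relevant estimate, while the paper's version is self-contained at the level of the gradient formulas and does not need the quantitative bound. One small caveat on your closing remark: the term $(\c{\phi}-2\alpha)^2$ in $V$ is what makes $V$ (rather than $\norm{\cdot}^2$) a Lyapunov function, but for this particular corollary it is not strictly indispensable --- items \eqref{prop:lyapunov:gradient:item1} and \eqref{prop:lyapunov:gradient:item2} of \cref{prop:lyapunov:gradient} show that pairing $\cG(\phi)$ with $(\nabla\cV_1)(\phi)$ or $(\nabla\cV_2)(\phi)$ alone already yields $4\cL_\infty(\phi)$, so either of those would serve equally well here.
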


\begin{proof} [Proof of \cref{cor:critical:points}]
Assume first that $\norm{ \cG( \phi ) } = 0$.
Then \cref{prop:lyapunov:gradient} implies that $8 \cL_\infty (\phi) = \langle (\nabla V ) ( \phi) , \cG(\phi) \rangle = 0$.
Next assume $\cL_\infty (\phi) = \int_0^1 (\realapprox{\phi}{\infty} (x) - \alpha)^2 \d x = 0$.
The fact that $\realapprox{\phi}{\infty} \in C(\R , \R)$ then implies that it holds for all $x \in [0,1]$ that $\realapprox{\phi}{\infty}(x) = \alpha$.
Hence, \eqref{eq:loss:gradient} demonstrates that $\cG(\phi) = 0 \in \R^{3 \width + 1 }$ and therefore $\norm{ \cG ( \phi ) } = 0$.
This completes the proof of \cref{cor:critical:points}.
\end{proof}

\section{Convergence analysis for gradient flow processes}
\label{section:gradientflow}
In this section we employ the findings from \cref{section:risk:regularity} to establish in \cref{theo:flow} below that the risks of the considered time-continuous gradient flow processes converge to zero. Our proof of \cref{theo:flow} uses the deterministic It\^{o} type formula for the Lyapunov function $V \colon \R^{ 3 \width + 1 } \to \R$ from \cref{setting:const}, which we establish in \cref{lem:flow:lyapunov} in \cref{subsection:ito:lyapunov} below, as well as the deterministic It\^{o} type formula for the risk function $\cL_{ \infty } \colon \R^{ 3 \width + 1 } \to \R$ from \cref{setting:const}, which we establish in \cref{lem:loss:integral} in \cref{subsection:ito:risk} below.

Our proof of the deterministic It\^{o} type formula for the Lyapunov function $V \colon \R^{ 3 \width + 1 } \to \R$ in \cref{lem:flow:lyapunov}, in turn, is based on the fact that the function $V \colon \R^{ 3 \width + 1 } \to \R$ from \cref{setting:const} satisfies the Lyapunov property in item \eqref{prop:lyapunov:gradient:item3} in \cref{prop:lyapunov:gradient} as well as on the well-known deterministic It\^{o}-type formula for continuously differentiable functions in \cref{lem:chainrule:gen} in \cref{subsection:ito:lyapunov} below. We include in this section a detailed proof for \cref{lem:chainrule:gen} only for completeness.

In contrast to \cref{lem:flow:lyapunov}, 
the deterministic It\^{o} type formula 
for the risk function $\cL_{ \infty } \colon \R^{ 3 \width + 1 } \allowbreak \to \R$
in \cref{lem:loss:integral} can not be proved through 
an application of \cref{lem:chainrule:gen} as the risk function 
$\cL_{ \infty } \colon \R^{ 3 \width + 1 } \to \R$ fails to be differentiable.
Instead we prove \cref{lem:loss:integral} through an approximation 
argument by employing the mollified 
rectifier functions $\sigma_r \in C^{ \infty }( \R, \R )$, $r \in [1,\infty)$, and their corresponding 
risk functions $\cL_r \colon \R^{ 3 \width + 1  } \to \R$, $r \in [1,\infty)$,
from \cref{setting:const}.

\subsection{Deterministic It\^{o} formulas for Lyapunov type functions}
\label{subsection:ito:lyapunov}

\begin{lemma} \label{lem:chainrule:gen}
Let $T \in (0, \infty)$, $n \in \N$, $\Theta \in C ( [ 0, T ] , \R^n )$, $F \in C^1 ( \R^n, \R)$, let $\vartheta \colon [0, T] \to \R^n$ be a bounded measurable function, and assume for all $t \in [0, T]$ that
\begin{equation}
    \Theta_t = \Theta_0 + \int_0^t \vartheta_s \d s.
\end{equation}
Then it holds for all $t \in [0, T]$ that
\begin{equation} 
    F ( \Theta_t ) = F ( \Theta_0 ) + \int_0^t \rbr[\big]{  F' ( \Theta_s ) }  \vartheta_s \d s.
\end{equation}
\end{lemma}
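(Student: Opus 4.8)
The plan is to establish the identity by telescoping over finer and finer equidistant partitions of $[0,t]$, combining the one-dimensional fundamental theorem of calculus applied to $F$ along line segments with Fubini's theorem and the uniform continuity of $F'$ on compact sets; in particular, no a priori differentiability of $t\mapsto F(\Theta_t)$ will be needed.

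First I would collect preliminaries. Since $\vartheta$ is bounded, put $L:=\sup_{s\in[0,T]}\abs{\vartheta_s}<\infty$ (using any norm on $\R^n$); the assumed integral representation of $\Theta$ then gives $\norm{\Theta_s-\Theta_u}\le L\abs{s-u}$ for all $s,u\in[0,T]$, so $\Theta$ is uniformly continuous and $\Theta([0,T])\subseteq\R^n$ is compact. Hence its closed $LT$-neighbourhood $\cK\subseteq\R^n$ is compact, $F'$ is uniformly continuous on $\cK$, and we may fix $\rho\colon[0,\infty)\to[0,\infty)$ with $\lim_{h\searrow0}\rho(h)=0$ and $\abs{(F'(x)-F'(y))z}\le\rho(\norm{x-y})\,\norm{z}$ for all $x,y\in\cK$ and $z\in\R^n$. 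Note also that $s\mapsto(F'(\Theta_s))\vartheta_s$ is measurable, being built from the continuous map $s\mapsto F'(\Theta_s)$ and the bounded measurable map $\vartheta$, and bounded on $[0,t]$, so the right-hand side integral in the assertion is well defined.

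Next, fix $t\in(0,T]$ (the case $t=0$ is trivial), and for $N\in\N$ and $k\in\{0,1,\dots,N\}$ set $t_k:=kt/N$. Telescoping together with the fundamental theorem of calculus applied to the $C^1$ function $[0,1]\ni r\mapsto F(\Theta_{t_k}+r(\Theta_{t_{k+1}}-\Theta_{t_k}))\in\R$ yields
\begin{equation*}
F(\Theta_t)-F(\Theta_0)=\sum_{k=0}^{N-1}\int_0^1\bigl(F'(\Theta_{t_k}+r(\Theta_{t_{k+1}}-\Theta_{t_k}))\bigr)(\Theta_{t_{k+1}}-\Theta_{t_k})\,\d r.
\end{equation*}
Using the hypothesis $\Theta_{t_{k+1}}-\Theta_{t_k}=\int_{t_k}^{t_{k+1}}\vartheta_s\,\d s$ and Fubini's theorem (the integrand is bounded, the domain has finite measure), the $k$-th summand equals $\int_{t_k}^{t_{k+1}}\int_0^1(F'(\xi_{k,s,r}))\vartheta_s\,\d r\,\d s$, where $\xi_{k,s,r}:=\Theta_{t_k}+r(\Theta_{t_{k+1}}-\Theta_{t_k})$, so that
\begin{equation*}
F(\Theta_t)-F(\Theta_0)-\int_0^t(F'(\Theta_s))\vartheta_s\,\d s=\sum_{k=0}^{N-1}\int_{t_k}^{t_{k+1}}\int_0^1\bigl(F'(\xi_{k,s,r})-F'(\Theta_s)\bigr)\vartheta_s\,\d r\,\d s.
\end{equation*}

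Finally I would estimate the right-hand side. For $s\in[t_k,t_{k+1}]$ and $r\in[0,1]$ one has $\norm{\xi_{k,s,r}-\Theta_s}\le\norm{\Theta_{t_k}-\Theta_s}+\norm{\Theta_{t_{k+1}}-\Theta_{t_k}}\le2Lt/N$, hence $\xi_{k,s,r},\Theta_s\in\cK$ and $\abs{(F'(\xi_{k,s,r})-F'(\Theta_s))\vartheta_s}\le\rho(2Lt/N)\,L$; summing, the displayed difference is bounded in absolute value by $tL\,\rho(2Lt/N)$, which tends to $0$ as $N\to\infty$. Since the left-hand side does not depend on $N$, it vanishes, which is precisely the claimed identity at $t$. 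The only genuinely non-routine point is this last step: interchanging the segment integration with the time integration and then using uniform continuity to control the \emph{frozen} argument $\xi_{k,s,r}$ of $F'$ against the true value $\Theta_s$; everything else is bookkeeping. (An alternative route would be to note that $F\circ\Theta$ is Lipschitz, hence absolutely continuous, differentiate it by the classical chain rule at Lebesgue points of $\vartheta$, and apply the fundamental theorem of calculus for absolutely continuous functions; the partition argument above, however, keeps the proof self-contained.)
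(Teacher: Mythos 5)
Your argument is correct, and it takes a genuinely different route from the paper's. The paper's proof is the one you sketch in your closing parenthetical: boundedness of $\vartheta$ makes $\Theta$ Lipschitz, Rademacher's theorem (together with the Lebesgue differentiation of $t\mapsto\int_0^t\vartheta_s\,\mathrm{d}s$) gives a full-measure set on which $\tfrac{\mathrm{d}}{\mathrm{d}t}\Theta_t=\vartheta_t$, the classical chain rule then yields $\tfrac{\mathrm{d}}{\mathrm{d}t}F(\Theta_t)=(F'(\Theta_t))\vartheta_t$ almost everywhere, and the identity follows from the fundamental theorem of calculus for the absolutely continuous function $F\circ\Theta$. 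Your telescoping-partition proof instead applies the elementary FTC to the $C^1$ restriction of $F$ along each chord $\Theta_{t_k}\to\Theta_{t_{k+1}}$, rewrites the increments via the hypothesis $\Theta_{t_{k+1}}-\Theta_{t_k}=\int_{t_k}^{t_{k+1}}\vartheta_s\,\mathrm{d}s$ and Fubini, and controls the error $F'(\xi_{k,r})-F'(\Theta_s)$ by uniform continuity of $F'$ on a compact neighbourhood of $\Theta([0,T])$; all estimates there are correct (in particular $\lVert\xi_{k,r}-\Theta_s\rVert\le 2Lt/N$ and the resulting bound $tL\,\rho(2Lt/N)\to0$). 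What the paper's route buys is brevity, at the cost of invoking two nontrivial facts from real analysis (a.e.\ differentiability of Lipschitz maps and the FTC for absolutely continuous functions); what your route buys is complete self-containedness, needing only the one-dimensional FTC for $C^1$ functions, Fubini, and uniform continuity on compacts. One cosmetic remark: your $\xi_{k,s,r}=\Theta_{t_k}+r(\Theta_{t_{k+1}}-\Theta_{t_k})$ does not actually depend on $s$, so the subscript is superfluous (harmlessly so), and dropping it makes the Fubini step transparent since the $r$-integrand is then continuous in $r$ and constant in $s$.
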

\begin{proof} [Proof of \cref{lem:chainrule:gen}]
\Nobs that the fact that $\vartheta$ is bounded proves that $\Theta$ is Lipschitz continuous.  Combining this and Rademacher's theorem shows that there exists a measurable set $E \subseteq [0,T]$ which satisfies that $\int_{[0, T] \backslash E} 1 \d s = 0 $, which satisfies for all $t \in E$ that $[0, T ] \ni s \mapsto \Theta_s \in \R^n$ is differentiable at $t$, and which satisfies for all $t \in E$ that $\frac{\d}{\d t} \Theta_t = \vartheta_t$. This and the chain rule demonstrate that for all $t \in E$ it holds that $[0, T ] \ni s \mapsto F ( \Theta_s ) \in \R$ is differentiable at $t$ and that $\frac{\d}{\d t} ( F ( \Theta_t ) ) = ( F' ( \Theta_t ) )  \vartheta_t$. Furthermore, \nobs that the fact that $\Theta$ is Lipschitz continuous and the fact that $F$ is continuously differentiable establish that $[0, T ] \ni t \mapsto F ( \Theta_t ) \in \R$ is Lipschitz continuous. Hence, we obtain that $[0, T ] \ni t \mapsto F ( \Theta_t ) \in \R$ is absolutely continuous. This shows for all $t \in [0,T]$ that
\begin{equation}
    F ( \Theta_t ) = F ( \Theta_0 ) + \int_0^t \rbr[\big]{  F' ( \Theta_s ) }  \vartheta_s \d s.
\end{equation}
The proof of \cref{lem:chainrule:gen} is thus complete.
\end{proof}

\begin{lemma} \label{lem:flow:lyapunov}
Assume \cref{setting:const}, let $T \in (0, \infty)$, and let $\Theta \in C([0, T] , \R^{3 \width + 1})$ satisfy for all $t \in [0, T]$ that $\Theta_t = \Theta_0 - \int_0^t \cG ( \Theta_s ) \d s$. 
Then it holds for all $t \in [0, T]$ that $V(\Theta_t) = V(\Theta_0) - 8 \int_0^t \cL_\infty (\Theta_s) \d s$.
\end{lemma}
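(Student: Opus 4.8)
The plan is to apply the deterministic It\^o-type chain rule from \cref{lem:chainrule:gen} to the function $V \colon \R^{3 \width + 1} \to \R$ from \cref{setting:const} along the gradient flow trajectory $\Theta$, and to then rewrite the resulting integrand using the Lyapunov identity $\langle (\nabla V)(\phi), \cG(\phi) \rangle = 8 \cL_\infty(\phi)$ established in item \eqref{prop:lyapunov:gradient:item3} of \cref{prop:lyapunov:gradient}. Since $V$ is a polynomial in the components of $\phi$, it holds that $V \in C^\infty(\R^{3 \width + 1}, \R) \subseteq C^1(\R^{3 \width + 1}, \R)$, so that $V$ qualifies as an admissible integrand $F$ in \cref{lem:chainrule:gen}, and the natural choice for the velocity field is $\vartheta_s = - \cG(\Theta_s)$, which matches the integral equation $\Theta_t = \Theta_0 - \int_0^t \cG(\Theta_s) \d s$ satisfied by $\Theta$.

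Before invoking \cref{lem:chainrule:gen} I would check its hypotheses on $\vartheta$. Boundedness follows since $\Theta \in C([0,T], \R^{3 \width + 1})$ implies that the set $\{\Theta_s \colon s \in [0,T]\}$ is compact, whence \cref{cor:g:bounded} gives $\sup_{s \in [0,T]} \norm{\cG(\Theta_s)} < \infty$. Measurability of $s \mapsto \cG(\Theta_s)$ is already implicit in the assumption that the integral $\int_0^t \cG(\Theta_s) \d s$ exists; alternatively, item \eqref{prop:limit:lr:4} in \cref{prop:limit:lr} shows that $\cG = \lim_{r \to \infty} \nabla \cL_r$ pointwise on all of $\R^{3 \width + 1}$, and since each $\nabla \cL_r$ is continuous by item \eqref{prop:limit:lr:1} in \cref{prop:limit:lr}, the function $\cG$ is Borel measurable, so that $\vartheta = - \cG \circ \Theta$, being the composition of a Borel function with the continuous map $\Theta$, is measurable.

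With these verifications in place, \cref{lem:chainrule:gen} applied with $F = V$ and $\vartheta_s = - \cG(\Theta_s)$ yields for all $t \in [0,T]$ that
\begin{equation*}
 V(\Theta_t) = V(\Theta_0) + \int_0^t \rbr[\big]{V'(\Theta_s)}\rbr[\big]{- \cG(\Theta_s)} \d s = V(\Theta_0) - \int_0^t \langle (\nabla V)(\Theta_s), \cG(\Theta_s) \rangle \d s .
\end{equation*}
Substituting the identity $\langle (\nabla V)(\phi), \cG(\phi) \rangle = 8 \cL_\infty(\phi)$ from item \eqref{prop:lyapunov:gradient:item3} in \cref{prop:lyapunov:gradient} with $\phi = \Theta_s$ into the last integral then gives $V(\Theta_t) = V(\Theta_0) - 8 \int_0^t \cL_\infty(\Theta_s) \d s$, which is the claim. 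I do not expect a serious obstacle here: the argument is essentially a one-step consequence of \cref{lem:chainrule:gen} and \cref{prop:lyapunov:gradient}, and the only point demanding a little care is the verification that the driving field $s \mapsto \cG(\Theta_s)$ is bounded and measurable, which is handled by compactness of $\{\Theta_s \colon s \in [0,T]\}$ together with \cref{cor:g:bounded} and \cref{prop:limit:lr}.
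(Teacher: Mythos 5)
Your proposal is correct and follows essentially the same route as the paper: both apply \cref{lem:chainrule:gen} with $F = V$ and $\vartheta_s = -\cG(\Theta_s)$, using \cref{cor:g:bounded} for boundedness and \cref{prop:limit:lr} for measurability of the driving field, and then substitute the Lyapunov identity from item \eqref{prop:lyapunov:gradient:item3} of \cref{prop:lyapunov:gradient}. Your explicit verification of the hypotheses on $\vartheta$ is a slightly more detailed write-up of exactly what the paper's proof invokes.
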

\begin{proof} [Proof of \cref{lem:flow:lyapunov}] 
\Nobs that \cref{cor:g:bounded} and the assumption that $\Theta \in C([0,T] , \R^{3 \width + 1 } )$ imply that $[0, T ] \ni t \mapsto \cG ( \Theta_t ) \in \R^{3 \width + 1}$ is bounded.
Combining this, the fact that $V \in C^\infty ( \R^{3 \width + 1} , \R )$, \cref{prop:limit:lr}, \cref{lem:chainrule:gen}, and \cref{prop:lyapunov:gradient} demonstrates that for all $t \in [0,T]$ we have that
\begin{equation}
    V(\Theta_t) - V(\Theta_0) = - \int_0^t  \langle ( \nabla V ) (\Theta_s), \cG(\Theta_s) \rangle \d s =- 8 \int_0^t \cL_\infty (\Theta_s) \d s.
\end{equation}
The proof of \cref{lem:flow:lyapunov} is thus complete.
\end{proof}

\begin{cor} \label{cor:flow:stability} 
Assume \cref{setting:const} and let $\Theta \in C([0, \infty) , \R^{3 \width + 1})$ satisfy for all $t \in [0, \infty)$ that $\Theta_t = \Theta_0 - \int_0^t \cG ( \Theta_s ) \d s$. 
Then $\sup_{t \in [ 0, \infty)} \norm{ \Theta_t } \leq \br{ V ( \Theta_0 ) } ^{1/2} < \infty$.
\end{cor}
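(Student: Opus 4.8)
The plan is to combine the deterministic Itô formula for the Lyapunov function $V$ from \cref{lem:flow:lyapunov} with the two-sided bound relating $V$ and $\norm{\cdot}^2$ from \cref{prop:lyapunov:norm}, noting only the trivial but crucial fact that $\cL_\infty$ is nonnegative (being the integral of a square).

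First I would fix an arbitrary $T \in (0,\infty)$ and apply \cref{lem:flow:lyapunov} to the restriction $\Theta|_{[0,T]} \in C([0,T],\R^{3\width+1})$, which is legitimate since $\Theta_t = \Theta_0 - \int_0^t \cG(\Theta_s)\d s$ holds for every $t \in [0,T]$ as well. This yields that for all $t \in [0,T]$ we have $V(\Theta_t) = V(\Theta_0) - 8\int_0^t \cL_\infty(\Theta_s)\d s$. Next I would observe that for every $s \in [0,\infty)$ it holds that $\cL_\infty(\Theta_s) = \int_0^1 (\realapprox{\Theta_s}{\infty}(y) - \alpha)^2 \d y \geq 0$, so the integral term is nonnegative and hence $V(\Theta_t) \leq V(\Theta_0)$ for all $t \in [0,T]$.

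Then I would invoke \cref{prop:lyapunov:norm}, which gives $\norm{\Theta_t}^2 \leq V(\Theta_t) \leq V(\Theta_0)$, and therefore $\norm{\Theta_t} \leq [V(\Theta_0)]^{1/2}$, for all $t \in [0,T]$. Since $T \in (0,\infty)$ was arbitrary and $V(\Theta_0) = \norm{\Theta_0}^2 + (\c{\Theta_0} - 2\alpha)^2 \in [0,\infty)$ is a fixed finite real number, taking the supremum over $t \in [0,\infty)$ establishes that $\sup_{t \in [0,\infty)} \norm{\Theta_t} \leq [V(\Theta_0)]^{1/2} < \infty$, which is the claim.

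There is no real obstacle here: the corollary is a direct packaging of \cref{lem:flow:lyapunov} and \cref{prop:lyapunov:norm}. The only point requiring a word of care is that \cref{lem:flow:lyapunov} is stated on a bounded time interval $[0,T]$, so the argument must be run for each such $T$ and the bound is then uniform in $T$ because the right-hand side $[V(\Theta_0)]^{1/2}$ does not depend on $T$; the nonnegativity of $\cL_\infty$ is what makes the monotonicity $V(\Theta_t) \leq V(\Theta_0)$ work and is the one extra ingredient not already explicit in the cited lemmas.
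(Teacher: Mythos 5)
Your proposal is correct and follows essentially the same route as the paper: the paper's proof likewise combines \cref{prop:lyapunov:norm} (giving $\norm{\Theta_t} \leq [V(\Theta_t)]^{1/2}$) with \cref{lem:flow:lyapunov} and the nonnegativity of $\cL_\infty$ to conclude $V(\Theta_t) \leq V(\Theta_0)$. Your extra remark about running the argument on each finite interval $[0,T]$ is a small point the paper leaves implicit, but it does not change the substance.
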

\begin{proof} [Proof of \cref{cor:flow:stability}]
\Nobs that \cref{prop:lyapunov:norm} implies for all $t \in [ 0, \infty)$ that $\norm{ \Theta_t } \leq \br{ V(\Theta_t) }^{1/2} $.
Furthermore, \nobs that \cref{lem:flow:lyapunov} and the fact that $\forall \, \phi \in \R^{3 \width + 1} \colon \cL_\infty ( \phi) \geq 0$ demonstrate for all $t \in [ 0, \infty)$ that $V(\Theta_t) \leq V(\Theta_0)$. This completes the proof of \cref{cor:flow:stability}.
\end{proof}

\subsection{Deterministic It\^{o} formulas for risk functions}
\label{subsection:ito:risk}

\begin{lemma} \label{lem:lr:bounded} 
Assume \cref{setting:const} and let $K \subseteq \R^{3 \width + 1}$ be a compact set. Then $\sup_{\phi \in K} \allowbreak \sup_{r \in [1 , \infty) } \allowbreak \norm{ ( \nabla \cL_r ) ( \phi ) } < \infty$.
\end{lemma}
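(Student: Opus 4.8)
The plan is to combine the explicit formulas for the partial derivatives of $\cL_r$ in items \eqref{prop:limit:lr:1}--\eqref{prop:limit:lr:2} in \cref{prop:limit:lr} with the bounds on $\sigma_r$ and $(\sigma_r)'$ that are \emph{uniform in} $r \in [1,\infty)$ provided by \cref{prop:relu:approximation}. First I would fix $\phi = (w_1,\ldots,w_{\width},b_1,\ldots,b_{\width},v_1,\ldots,v_{\width},c) \in \R^{3\width+1}$ and recall, exactly as in \eqref{proof:limit:lr:eq1}, that item \eqref{prop:relu:approx:item2} in \cref{prop:relu:approximation} shows for all $r \in [1,\infty)$ and all $x \in [0,1]$ that
\[
  \abs{\realapprox{\phi}{r}(x) - \alpha} \le \abs{\alpha} + \abs{c} + \smallsum_{j=1}^{\width}\abs{v_j}(\abs{w_j}+\abs{b_j}+1),
\]
the right-hand side being independent of $r$ and continuous in $\phi$.

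Second, I would insert this into the four identities in \eqref{eq:approx:loss:gradient} and bound the remaining factors in the integrands using $0 < (\sigma_r)'(x) < 1$ (item \eqref{prop:relu:approx:item4} in \cref{prop:relu:approximation}), the estimate $0 < \sigma_r(w_jx+b_j) < \sigma_\infty(w_jx+b_j)+1 \le \abs{w_j}+\abs{b_j}+1$ for $x\in[0,1]$ (item \eqref{prop:relu:approx:item2}), and the trivial bound $\int_0^1 1 \d x = 1$. This yields, for every $j \in \{1,2,\ldots,\width\}$ and every $r \in [1,\infty)$, bounds on $\abs{(\tfrac{\partial}{\partial w_j}\cL_r)(\phi)}$, $\abs{(\tfrac{\partial}{\partial b_j}\cL_r)(\phi)}$, $\abs{(\tfrac{\partial}{\partial v_j}\cL_r)(\phi)}$, and $\abs{(\tfrac{\partial}{\partial c}\cL_r)(\phi)}$, each of which is a continuous function of $\phi$ that does not involve $r$.

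Third, summing the squares of these per-coordinate bounds produces a continuous function $\Psi\colon\R^{3\width+1}\to[0,\infty)$ with $\norm{(\nabla\cL_r)(\phi)}\le\Psi(\phi)$ for all $\phi\in\R^{3\width+1}$ and all $r\in[1,\infty)$. Since $\Psi$ is continuous and $K$ is compact, $\sup_{\phi\in K}\Psi(\phi)<\infty$, whence $\sup_{\phi\in K}\sup_{r\in[1,\infty)}\norm{(\nabla\cL_r)(\phi)}\le\sup_{\phi\in K}\Psi(\phi)<\infty$.

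I do not expect any genuine obstacle: the one point requiring care is that every estimate be taken uniformly in $r$, which is exactly the role of the bounds $\sigma_r(x)<\sigma_\infty(x)+1$ and $0<(\sigma_r)'(x)<1$ from \cref{prop:relu:approximation}; the remainder is the triangle inequality, monotonicity of the integral over $[0,1]$, and the extreme value theorem applied on the compact set $K$.
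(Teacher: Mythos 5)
Your proposal is correct and follows essentially the same route as the paper: uniform-in-$r$ bounds on $\sigma_r$ and $(\sigma_r)'$ from \cref{prop:relu:approximation}, inserted into the explicit gradient formulas \eqref{eq:approx:loss:gradient}, yielding an $r$-independent continuous majorant that is then bounded on the compact set $K$. The only (immaterial) difference is that the paper routes the estimate through $\sqrt{\cL_r(\phi)}$ via the Cauchy--Schwarz inequality and a separate uniform bound on $\cL_r$, whereas you bound the integrands pointwise directly.
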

\begin{proof} [Proof of \cref{lem:lr:bounded}]
\Nobs that \cref{prop:relu:approximation} demonstrates for all $r \in [1 , \infty)$, $\phi = (w_1, \ldots, \allowbreak w_{\width}, b_1, \ldots, b_{\width}, v_1, \ldots, \allowbreak v_{\width}, c) \in \R^{3 \width + 1 }$, $x \in [0,1]$ that
\begin{equation}
   \abs{ \realapprox{\phi}{r} ( x) }
   \leq | c | + \smallsum_{j=1}^\width | v_j | ( \sigma_\infty ( w_j x + b_j ) + 1 ) 
   \leq | c | + \smallsum_{j=1}^\width | v_j | ( | w_j | + | b_j | + 1 ).
\end{equation}
Hence, we obtain for all $r \in [1, \infty)$, $\phi = (w_1, \ldots, w_{\width}, b_1, \ldots, b_{\width}, v_1, \ldots, \allowbreak v_{\width}, c) \in \R^{3 \width + 1 }$ that
\begin{equation}
    \cL_r ( \phi )
    \leq \int_0^1 \rbr[\big]{ | \alpha | +  | \realapprox{\phi}{r} ( x)| } ^2 \d x
    \leq \rbr*{ | \alpha | + | c | + \smallsum_{j=1}^\width | v_j | ( | w_j | + | b_j | + 1 ) }^2.
\end{equation}
This implies that $\sup_{\phi \in K} \sup_{r \in [1 , \infty)} \cL_r ( \phi ) < \infty$. Next \nobs that \eqref{eq:approx:loss:gradient} and the Cauchy-Schwarz inequality demonstrate that for all $r \in [1, \infty)$, $\phi = (w_1, \ldots, w_{\width}, b_1, \ldots, b_{\width}, v_1, \ldots, \allowbreak v_{\width}, c) \in \R^{3 \width + 1 }$ it holds that
\begin{equation} \label{eq:lr:bounded:1}
    \abs*{   \rbr*{  \tfrac{\partial }{ \partial c}  \cL_r  } ( \phi ) } 
    \leq 2 \int_0^1 | \realapprox{\phi}{r}(x) - \alpha | \d x \leq 2 \sqrt{\cL_r ( \phi)}.
\end{equation}
Furthermore, \nobs that the Cauchy-Schwarz inequality, \cref{prop:relu:approximation}, and \eqref{eq:approx:loss:gradient} prove that for all $r \in [1, \infty)$, $\phi = (w_1, \ldots, w_{\width}, b_1, \ldots, b_{\width}, v_1, \ldots, \allowbreak v_{\width}, c) \in \R^{3 \width + 1 }$, $j \in \{1, 2, \ldots, \width \}$ it holds that
\begin{equation} \label{eq:lr:bounded:2}
\begin{split}
    \abs[\big]{  \rbr[\big]{  \tfrac{\partial } { \partial w_j } \cL_r } ( \phi ) }
    &\leq 2 | v_j | \int_0^1 |x (\sigma_r) ' ( w_j x + b_j )|  | \realapprox{\phi}{r}(x) - \alpha | \d x \\
    &\leq 2 | v_j | \int_0^1 | \realapprox{\phi}{r}(x) - \alpha | \d x
    \leq 2 | v_j | \sqrt{\cL_r ( \phi ) }
    \end{split}
\end{equation}
and
\begin{equation} \label{eq:lr:bounded:3}
     \begin{split}
     \abs[\big]{  \rbr[\big]{  \tfrac{\partial } { \partial b_j } \cL_r } ( \phi ) }
     &\leq 2 | v_j | \int_0^1 | (\sigma_r) ' ( w_j x + b_j )|   | \realapprox{\phi}{r}(x) - \alpha | \d x \\
    & \leq 2 | v_j | \int_0^1 | \realapprox{\phi}{r}(x) - \alpha | \d x 
     \leq 2 | v_j | \sqrt{\cL_r ( \phi ) } .
     \end{split}
\end{equation}
In addition, \nobs that the Cauchy-Schwarz inequality, \cref{prop:relu:approximation}, and \eqref{eq:approx:loss:gradient} demonstrate that for all $r \in [1, \infty)$, $\phi = (w_1, \ldots, w_{\width}, b_1, \ldots, b_{\width}, v_1, \ldots, \allowbreak v_{\width}, c) \in \R^{3 \width + 1 }$, $j \in \{1, 2, \ldots, \width \}$ it holds that
\begin{equation}
\begin{split} 
    \abs[\big]{  \rbr[\big]{   \tfrac{\partial }{ \partial v_j} \cL_r } ( \phi ) } 
    & \leq  2  \int_0^1 [ \sigma_r ( w_j x + b_j) ] | \realapprox{\phi}{r}(x) - \alpha | \d x \\
    &\leq 2 ( 1 + | w_j | + | b_j | )\int_0^1 | \realapprox{\phi}{r}(x) - \alpha | \d x \\ &
    \leq 2 ( 1 + | w_j | + | b_j | ) \sqrt{ \cL_r ( \phi ) }.
\end{split}
\end{equation}
This, \eqref{eq:lr:bounded:1}, \eqref{eq:lr:bounded:2}, and \eqref{eq:lr:bounded:3} show that for all $r \in [1, \infty)$, $\phi = (w_1, \ldots, w_{\width}, b_1, \ldots, b_{\width}, v_1, \ldots, \allowbreak v_{\width}, c) \in \R^{3 \width + 1 }$ it holds that
\begin{equation}
    \norm{ (\nabla \cL_r) ( \phi ) } ^2 \leq \br*{  4 + \smallsum_{j=1}^\width \rbr*{  8 (v_j) ^2 + 4 ( 1 + | w_ j| + | b_j | ) ^2  } } \cL_r ( \phi)  .
\end{equation}
Combining this with the fact that $\sup_{\phi \in K} \sup_{r \in [1 , \infty) } \cL_r ( \phi ) < \infty$ establishes that
\begin{equation}
    \sup\nolimits_{\phi \in K} \sup\nolimits_{ r \in [1 , \infty) } \norm{ (\nabla \cL_r) ( \phi ) } ^2 < \infty.
\end{equation}
The proof of \cref{lem:lr:bounded} is thus complete.
\end{proof}

\begin{lemma} \label{lem:loss:integral} 
Assume \cref{setting:const}, let $T \in (0, \infty)$, and let $\Theta \in C([0, T ] , \R^{3 \width + 1} )$ satisfy for all $t \in [0,T] $ that $\Theta_t = \Theta_0 - \int_0^t \cG ( \Theta_s ) \d s$.
Then it holds for all $t \in [0,T]$ that $\cL_\infty (\Theta_t) = \cL_\infty (\Theta_0) - \int_0^t \norm{ \cG( \Theta_s ) } ^2 \d s$.
\end{lemma}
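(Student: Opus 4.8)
The plan is to reduce the statement to the smooth case by working with the mollified risk functions $\cL_r \in C^1(\R^{3\width+1},\R)$, $r \in [1,\infty)$, from \cref{setting:const}, and then to pass to the limit $r \to \infty$. First I would note that, since $\Theta \in C([0,T],\R^{3\width+1})$, the set $K = \Theta([0,T]) \subseteq \R^{3\width+1}$ is compact; the function $[0,T] \ni s \mapsto \cG(\Theta_s) \in \R^{3\width+1}$ is measurable (this is implicit in the integral equation defining $\Theta$) and, by \cref{cor:g:bounded} applied to $K$, bounded. Hence, for each fixed $r \in [1,\infty)$, item \eqref{prop:limit:lr:1} in \cref{prop:limit:lr} gives $\cL_r \in C^1(\R^{3\width+1},\R)$, so the deterministic chain rule \cref{lem:chainrule:gen}, applied with $F = \cL_r$ and $\vartheta_s = -\cG(\Theta_s)$, yields for every $t \in [0,T]$ that
\[
    \cL_r(\Theta_t) = \cL_r(\Theta_0) - \int_0^t \langle (\nabla \cL_r)(\Theta_s), \cG(\Theta_s) \rangle \d s .
\]

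Next I would let $r \to \infty$ in this identity. On the left-hand side, item \eqref{prop:limit:lr:3} in \cref{prop:limit:lr} gives $\cL_r(\Theta_t) \to \cL_\infty(\Theta_t)$ and $\cL_r(\Theta_0) \to \cL_\infty(\Theta_0)$. For the integrand, item \eqref{prop:limit:lr:4} in \cref{prop:limit:lr} gives $(\nabla \cL_r)(\Theta_s) \to \cG(\Theta_s)$ for every $s \in [0,T]$, whence $\langle (\nabla \cL_r)(\Theta_s), \cG(\Theta_s) \rangle \to \norm{\cG(\Theta_s)}^2$ pointwise in $s$. To move the limit inside the integral I would invoke the dominated convergence theorem: the Cauchy--Schwarz inequality gives $\abs{\langle (\nabla \cL_r)(\Theta_s), \cG(\Theta_s) \rangle} \le \norm{(\nabla \cL_r)(\Theta_s)}\, \norm{\cG(\Theta_s)}$, the factor $\norm{\cG(\Theta_s)}$ is bounded on $[0,T]$ by the first step, and \cref{lem:lr:bounded} applied to the compact set $K$ bounds $\sup_{r \in [1,\infty)} \norm{(\nabla \cL_r)(\Theta_s)}$ uniformly in $s \in [0,T]$; since $[0,t]$ has finite Lebesgue measure, this supplies the required integrable majorant. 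Passing to the limit then gives $\cL_\infty(\Theta_t) = \cL_\infty(\Theta_0) - \int_0^t \norm{\cG(\Theta_s)}^2 \d s$ for all $t \in [0,T]$, which is the assertion.

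The only genuine obstacle here is the justification of the interchange of the limit $r \to \infty$ with the time integral, and this is precisely what the uniform-in-$r$ gradient bound of \cref{lem:lr:bounded} (together with \cref{cor:g:bounded}) is designed to supply; the remaining ingredients are the pointwise convergence statements already collected in \cref{prop:limit:lr} and the deterministic chain rule \cref{lem:chainrule:gen}, so I do not expect any further difficulties.
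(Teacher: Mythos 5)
Your proposal is correct and follows essentially the same route as the paper: apply \cref{lem:chainrule:gen} to the smooth approximations $\cL_r$, then pass to the limit $r \to \infty$ using items \eqref{prop:limit:lr:3} and \eqref{prop:limit:lr:4} of \cref{prop:limit:lr} for pointwise convergence and the uniform bounds from \cref{cor:g:bounded} and \cref{lem:lr:bounded} to justify dominated convergence. No gaps.
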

\begin{proof} [Proof of \cref{lem:loss:integral}]
\Nobs that \cref{lem:chainrule:gen} and item \eqref{prop:limit:lr:1} in \cref{prop:limit:lr} demonstrate that for all $r \in [1 , \infty)$, $t \in [0,T]$ it holds that
\begin{equation} \label{eq:lem:loss:integral}
    \cL_r ( \Theta_t) - \cL_r ( \Theta_0) = - \int_0^t \langle (\nabla \cL_r) ( \Theta_s), \cG ( \Theta_s ) \rangle \d s.
\end{equation}
Next \nobs that \cref{prop:limit:lr} proves that for all $t \in [0,T]$ it holds that $\lim_{r \to \infty} (  \cL_r ( \Theta_t) - \cL_r ( \Theta_0)) =   \cL_\infty ( \Theta_t) - \cL  ( \Theta_0)$.
Furthermore, \nobs that \cref{prop:limit:lr} ensures that for all $s \in [0,T]$ we have that $\lim_{r \to \infty} \langle ( \nabla \cL_r ) ( \Theta_s), \cG ( \Theta_s ) \rangle = \langle \cG ( \Theta_s), \cG ( \Theta_s ) \rangle =  \norm{  \cG ( \Theta_s ) } ^2$.
In addition, \nobs that the assumption that $\Theta \in C([0,T], \R^{3 \width + 1 })$ implies that there exists a compact set $K \subseteq \R^{3 \width + 1}$ such that $\forall \, s \in [0, T] \colon \Theta_s \in K$.
Combining this, the Cauchy-Schwarz inequality, \cref{cor:g:bounded}, and \cref{lem:lr:bounded} shows that
\begin{equation}
\begin{split}
    &\sup\nolimits_{r \in [1 , \infty)} \sup\nolimits_{s \in [0,T]} | \langle ( \nabla \cL_r) ( \Theta_s), \cG ( \Theta_s ) \rangle | \\ 
    &\leq \sup\nolimits_{r \in [1 , \infty)} \sup\nolimits_{\phi \in K}  | \langle (\nabla \cL_r ) ( \phi), \cG ( \phi ) \rangle | \\
    &\leq \sup\nolimits_{r \in [1 , \infty)} \sup\nolimits_{\phi \in K}  \rbr[\big]{  \norm{ (\nabla \cL_r) ( \phi ) }  \norm{  \cG ( \phi ) } } < \infty.
    \end{split}
\end{equation}
The dominated convergence theorem hence proves that for all $t \in [0,T]$ we have that
\begin{equation}
    \lim_{r \to \infty} \br*{  \int_0^t \langle (\nabla \cL_r) ( \Theta_s), \cG ( \Theta_s ) \rangle \d s }
    = \int_0^t  \br*{  \lim_{r \to \infty} \langle (\nabla \cL_r) ( \Theta_s), \cG ( \Theta_s ) \rangle } \d s
    = \int_0^t \norm{ \cG ( \Theta_s ) } ^2 \d s.
\end{equation}
Combining this with \eqref{eq:lem:loss:integral} completes the proof of \cref{lem:loss:integral}.
\end{proof}

\subsection{Convergence of the risks of gradient flow processes}

\begin{lemma} \label{lem:loss:decreasing}
Assume \cref{setting:const} and let $\Theta \in C([0, \infty) , \R^{3 \width + 1})$ satisfy for all $t \in [0, \infty)$ that $\Theta_t = \Theta_0 - \int_0^t \cG ( \Theta_s ) \d s$. Then it holds that $[0, \infty) \ni t \mapsto \cL_\infty ( \Theta_t) \in [0, \infty)$ is non-increasing.
\end{lemma}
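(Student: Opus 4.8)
The plan is to read this off directly from the deterministic It\^o-type identity for the risk function established in \cref{lem:loss:integral}. The only subtlety is that \cref{lem:loss:integral} is phrased on a fixed finite interval $[0,T]$, whereas here the flow is defined on $[0,\infty)$, so I would first pass to finite intervals.

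Concretely, I would fix an arbitrary $T \in (0,\infty)$ and note that the restriction $\Theta|_{[0,T]}$ lies in $C([0,T],\R^{3\width+1})$ and still satisfies $\Theta_t = \Theta_0 - \int_0^t \cG(\Theta_s)\d s$ for all $t \in [0,T]$, since the integral on the right depends only on the values of $\Theta$ on $[0,t] \subseteq [0,T]$. Hence \cref{lem:loss:integral} applies to this restriction and gives $\cL_\infty(\Theta_t) = \cL_\infty(\Theta_0) - \int_0^t \norm{\cG(\Theta_s)}^2 \d s$ for every $t \in [0,T]$. Then, given any $t_1, t_2 \in [0,\infty)$ with $t_1 \leq t_2$, I would choose $T \geq t_2$ and subtract the two instances of this identity to obtain
\begin{equation*}
    \cL_\infty(\Theta_{t_2}) - \cL_\infty(\Theta_{t_1}) = - \int_{t_1}^{t_2} \norm{\cG(\Theta_s)}^2 \d s \leq 0,
\end{equation*}
where the inequality holds because the integrand is non-negative. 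Since $t_1 \leq t_2$ were arbitrary, this shows that $[0,\infty) \ni t \mapsto \cL_\infty(\Theta_t) \in [0,\infty)$ is non-increasing (the codomain $[0,\infty)$ being justified by the fact that $\cL_\infty$ is an integral of a square and hence non-negative).

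I do not expect any real obstacle: the statement is an immediate corollary of \cref{lem:loss:integral}, and the write-up should be only a few lines, the single mild point being the routine observation that the restricted gradient flow satisfies the hypotheses of that lemma on each finite time interval.
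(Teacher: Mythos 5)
Your proposal is correct and follows exactly the paper's route: the paper also derives this lemma as an immediate consequence of \cref{lem:loss:integral}, with the monotonicity read off from the non-negativity of $\norm{\cG(\Theta_s)}^2$. Your additional remark about restricting to finite intervals $[0,T]$ is a harmless elaboration of a point the paper leaves implicit.
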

\begin{proof} [Proof of \cref{lem:loss:decreasing}]
This is an immediate consequence of \cref{lem:loss:integral}.
\end{proof}

\begin{theorem} \label{theo:flow}
Assume \cref{setting:const} and let $\Theta \in C([0, \infty) , \R^{3 \width + 1})$ satisfy for all $t \in [0, \infty)$ that $\Theta_t = \Theta_0 - \int_0^t \cG ( \Theta_s ) \d s$. Then 
\begin{enumerate} [(i)] 
    \item \label{theo:flow:item1} it holds that $\sup_{t \in [0, \infty)} \norm{ \Theta_t } \leq \br{ V(\Theta_0 )} ^{1/2} < \infty$,
    \item \label{theo:flow:item2} it holds for all $t \in (0, \infty)$ that $\cL_\infty ( \Theta_t ) \leq \frac{V ( \Theta_0 ) }{8 t}$, and
    \item \label{theo:flow:item3} it holds that $\limsup_{t \to \infty} \cL_\infty (\Theta_t) = 0$.
\end{enumerate}
\end{theorem}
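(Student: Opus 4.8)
The plan is to read off all three items almost directly from the Lyapunov machinery already assembled in \cref{subsection:ito:lyapunov} and \cref{subsection:ito:risk}. Item \eqref{theo:flow:item1} is nothing but \cref{cor:flow:stability}, so there is nothing further to do there.

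For item \eqref{theo:flow:item2}, I would start from the deterministic It\^o formula for the Lyapunov function in \cref{lem:flow:lyapunov}, which gives $V(\Theta_t) = V(\Theta_0) - 8 \int_0^t \cL_\infty(\Theta_s) \d s$ for every $t \in [0,\infty)$ (applied on $[0,T]$ for arbitrary $T \geq t$). Since $V$ is a sum of squares (equivalently, by the lower bound $\norm{\phi}^2 \leq V(\phi)$ in \cref{prop:lyapunov:norm}) we have $V(\Theta_t) \geq 0$, and hence $8 \int_0^t \cL_\infty(\Theta_s) \d s \leq V(\Theta_0)$. The one extra ingredient is monotonicity: \cref{lem:loss:decreasing} shows that $s \mapsto \cL_\infty(\Theta_s)$ is non-increasing, so $\int_0^t \cL_\infty(\Theta_s) \d s \geq t \, \cL_\infty(\Theta_t)$. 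Combining these two inequalities yields $8 t \, \cL_\infty(\Theta_t) \leq V(\Theta_0)$, which (after dividing by $8t$ for $t \in (0,\infty)$) is precisely item \eqref{theo:flow:item2}.

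Item \eqref{theo:flow:item3} then follows by letting $t \to \infty$ in item \eqref{theo:flow:item2}: the right-hand side $\tfrac{V(\Theta_0)}{8t}$ converges to $0$, and since $\cL_\infty(\Theta_t) = \int_0^1 (\realapprox{\Theta_t}{\infty}(y) - \alpha)^2 \d y \geq 0$ for all $t$, we conclude $\limsup_{t \to \infty} \cL_\infty(\Theta_t) = 0$.

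Essentially all the substance lives in the preparatory results — the Lyapunov identity $\langle (\nabla V)(\phi), \cG(\phi) \rangle = 8 \cL_\infty(\phi)$ in item \eqref{prop:lyapunov:gradient:item3} of \cref{prop:lyapunov:gradient}, the a priori gradient bound in \cref{lem:gradient:est}, and the deterministic It\^o-type formulas in \cref{lem:flow:lyapunov} and \cref{lem:loss:integral} — so I do not expect a genuine obstacle in this concluding step. The only mild care required is the elementary observation that a non-increasing nonnegative integrand is bounded below on $[0,t]$ by $t$ times its value at the right endpoint, which is what converts the integrated dissipation estimate into the pointwise $O(1/t)$ decay rate.
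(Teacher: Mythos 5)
Your proposal is correct and follows essentially the same route as the paper: item (i) via \cref{cor:flow:stability}, and items (ii)--(iii) by combining the monotonicity of $s \mapsto \cL_\infty(\Theta_s)$ from \cref{lem:loss:decreasing} with the identity $8\int_0^t \cL_\infty(\Theta_s)\,\mathrm{d}s = V(\Theta_0) - V(\Theta_t) \leq V(\Theta_0)$ from \cref{lem:flow:lyapunov} to obtain $t\,\cL_\infty(\Theta_t) \leq V(\Theta_0)/8$. No gaps.
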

\begin{proof} [Proof of \cref{theo:flow}]
\Nobs that \cref{cor:flow:stability} establishes item \eqref{theo:flow:item1}. Next \nobs that \cref{lem:flow:lyapunov} and \cref{lem:loss:decreasing} prove that for all $t \in [0, \infty)$ it holds that
\begin{equation} 
    t \cL_\infty ( \Theta_t ) = \int_0^t \cL_\infty ( \Theta_t ) \d s \leq \int_0^t \cL_\infty ( \Theta_s) \d s = \frac{V(\Theta_0) - V ( \Theta_t) }{8} \leq \frac{ V( \Theta_0)}{8} < \infty.
\end{equation}
Hence, we obtain for all $t \in (0, \infty)$ that
\begin{equation}
   \cL_\infty ( \Theta_t ) \leq \frac{V ( \Theta_0 ) }{8 t}.
\end{equation}
This establishes items \eqref{theo:flow:item2} and \eqref{theo:flow:item3}. The proof of \cref{theo:flow} is thus complete.
\end{proof}

\section{Convergence analysis for gradient descent processes}
\label{section:gradientdescent}
In this section we use the findings from \cref{section:risk:regularity} to prove in \cref{theo:gd:loss} in \cref{subsection:theorem:gd} below that the risks of the considered time-discrete GD processes converge to zero. Our proof of \cref{theo:gd:loss} uses the fact that the function $V \colon \R^{ 3 \width + 1 } \to \R$ from \cref{setting:const} is also a Lyapunov function for the considered time-discrete GD processes, which we establish in \cref{lem:loss:decreasing} below. Moreover, in \cref{subsection:gd:random:initialization} below we apply \cref{theo:gd:loss} to establish in \cref{cor:gd:random} that also the expectations of risks of the time-discrete GD processes with random initializations do converge to zero.

\subsection{Lyapunov type estimates for gradient descent processes}
\label{subsection:gd:lyapunov}

\begin{lemma} \label{lem:est:vtheta_n}
Assume \cref{setting:const}, let $\gamma \in (0, \infty)$, and let $\Theta = (\Theta_n)_{n \in \N_0} = ( ( \Theta_n^1,  \ldots, \allowbreak \Theta_n^{3 \width + 1}))_{n \in \N_0} \colon \allowbreak \N_0 \to  \R^{3 \width + 1}$ satisfy for all $n \in \N_0$ that $\Theta_{n+1} = \Theta_n - \gamma \cG ( \Theta_n)$. Then it holds for all $n \in \N_0$ that
\begin{equation}  
    V(\Theta_{n+1}) -  V ( \Theta_n) \leq - 8 \gamma \cL_\infty (\Theta_n) + 2\gamma ^2 \norm{ \cG ( \Theta_n ) } ^2.
\end{equation}
\end{lemma}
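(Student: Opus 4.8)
The plan is to exploit that $V \colon \R^{3 \width + 1} \to \R$ is a quadratic polynomial, so that a second-order Taylor expansion along the line segment from $\Theta_n$ to $\Theta_{n+1}$ is exact, and then to substitute the Lyapunov identity from \cref{prop:lyapunov:gradient}. Fix $n \in \N_0$, abbreviate $h = (h_1, \ldots, h_{3 \width + 1}) = \Theta_{n+1} - \Theta_n = -\gamma \cG(\Theta_n) \in \R^{3 \width + 1}$, and note that $[0,1] \ni t \mapsto V(\Theta_n + t h) \in \R$ is smooth. The fundamental theorem of calculus and the chain rule therefore yield
\begin{equation}
V(\Theta_{n+1}) - V(\Theta_n) = \int_0^1 \langle (\nabla V)(\Theta_n + t h), h \rangle \d t.
\end{equation}

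First I would rewrite the integrand using \cref{prop:v:gradient} with the choices $\phi = \Theta_n + t h$ and $\psi = \Theta_n$, which gives $(\nabla V)(\Theta_n + t h) = (\nabla V)(\Theta_n) + 2 t h + (0, \ldots, 0, 2 t h_{3 \width + 1})$ and hence $\langle (\nabla V)(\Theta_n + t h), h \rangle = \langle (\nabla V)(\Theta_n), h \rangle + 2 t ( \norm{h}^2 + |h_{3 \width + 1}|^2 )$ for all $t \in [0,1]$. Integrating this identity over $t \in [0,1]$ produces the exact equality
\begin{equation}
V(\Theta_{n+1}) - V(\Theta_n) = \langle (\nabla V)(\Theta_n), h \rangle + \norm{h}^2 + |h_{3 \width + 1}|^2.
\end{equation}

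It then remains to estimate the two terms on the right-hand side. Since $h = -\gamma \cG(\Theta_n)$, item \eqref{prop:lyapunov:gradient:item3} in \cref{prop:lyapunov:gradient} shows that $\langle (\nabla V)(\Theta_n), h \rangle = -\gamma \langle (\nabla V)(\Theta_n), \cG(\Theta_n) \rangle = -8 \gamma \cL_\infty(\Theta_n)$, while the trivial bound $|h_{3 \width + 1}|^2 \leq \norm{h}^2$ gives $\norm{h}^2 + |h_{3 \width + 1}|^2 \leq 2 \norm{h}^2 = 2 \gamma^2 \norm{\cG(\Theta_n)}^2$. Combining these two facts yields $V(\Theta_{n+1}) - V(\Theta_n) \leq -8 \gamma \cL_\infty(\Theta_n) + 2 \gamma^2 \norm{\cG(\Theta_n)}^2$, which is the claim.

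There is no genuine difficulty in this argument; the only point requiring attention is that the extra summand $(\c{\phi} - 2\alpha)^2$ in the definition of $V$ is responsible for the additional $|h_{3 \width + 1}|^2$ appearing in the second-order term, which is precisely why the coefficient in front of $\gamma^2 \norm{\cG(\Theta_n)}^2$ is $2$ rather than $1$; this extra contribution is absorbed cleanly by the bound $|h_{3 \width + 1}|^2 \leq \norm{h}^2$. Note also that no smallness assumption on $\gamma$ is needed here — the estimate holds for every $\gamma \in (0,\infty)$ — and the role of the hypothesis $\gamma \leq (4 \norm{\Theta_0} + 6 |\alpha| + 2)^{-2}$ only enters later, when this inequality is iterated along the GD trajectory.
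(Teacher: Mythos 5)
Your proof is correct and follows essentially the same route as the paper: both expand $V$ along the segment from $\Theta_n$ to $\Theta_{n+1}$ via the fundamental theorem of calculus, use \cref{prop:v:gradient} to handle the (exactly quadratic) second-order contribution, invoke item \eqref{prop:lyapunov:gradient:item3} of \cref{prop:lyapunov:gradient} for the first-order term, and absorb the extra $|h_{3\width+1}|^2$ via the bound $|h_{3\width+1}|^2 \leq \norm{h}^2$, which is where the factor $2$ in front of $\gamma^2\norm{\cG(\Theta_n)}^2$ comes from. The only difference is cosmetic: you integrate the exact Taylor identity first and estimate afterwards, whereas the paper bounds the integrand inside the integral.
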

\begin{proof} [Proof of \cref{lem:est:vtheta_n}]
Throughout this proof let $n \in \N_0$ be arbitrary and let $g \colon \R \to \R$ satisfy for all $t \in \R$ that $g(t) = V ( t \Theta_{n+1} + ( 1-t) \Theta_n )$. The fact that $V$ is continuously differentiable establishes that $g$ is continuously differentiable. The fundamental theorem of calculus and the chain rule hence ensure that
\begin{equation}
    \begin{split}
        &V(\Theta_{n+1}) - V(\Theta_n) =g(1)-g(0) = \int_0^1 g'(t) \d t\\
        &= \int_0^1 \langle (\nabla V) (t \Theta_{n+1} + (1-t) \Theta_n), \Theta_{n+1} - \Theta_n \rangle \d t \\
        &= - \gamma \int_0^1 \langle (\nabla V) (t \Theta_{n+1} + (1-t) \Theta_n), \cG ( \Theta_n ) \rangle \d t \\
        &= - \gamma \int_0^1 \langle (\nabla V ) (\Theta_n) , \cG(\Theta_n) \rangle \d t \\
        &\quad - \gamma \int_0^1 \langle (\nabla V) (t \Theta_{n+1} + (1-t) \Theta_n) - (\nabla V) ( \Theta_n), \cG ( \Theta_n ) \rangle \d t.
    \end{split}
\end{equation}
Next \nobs that \cref{prop:lyapunov:gradient} implies that $\langle (\nabla V ) (\Theta_n) , \cG(\Theta_n) \rangle = 8 \cL_\infty (\Theta_n)$. Furthermore, \nobs that \cref{prop:v:gradient} establishes for all $t \in [0, 1]$ that
\begin{equation}
    \begin{split}
        &\langle (\nabla V) (t \Theta_{n+1} + (1-t) \Theta_n) - (\nabla V ) ( \Theta_n), \cG ( \Theta_n ) \rangle \\
        &= \langle (\nabla V) (t (\Theta_{n+1} - \Theta_n) + \Theta_n ) - (\nabla V ) ( \Theta_n), \cG ( \Theta_n ) \rangle \\
        &= 2t \langle \Theta_{n+1} - \Theta_n, \cG(\Theta_n) \rangle + 2t (\Theta_{n+1}^{3 \width + 1} - \Theta_n^{3 \width + 1})\cG_{3 \width +1}(\Theta_n) \\
        &= -2 t \gamma \norm{  \cG ( \Theta_n ) } ^2 - 2 t \gamma |\cG_{3 \width +1}(\Theta_n) | ^2 \geq - 4 t \gamma \norm{  \cG ( \Theta_n ) } ^2.
    \end{split}
\end{equation}
Hence, we obtain that
\begin{equation}
    \begin{split}
        V(\Theta_{n+1}) - V(\Theta_n) 
        &\leq - 8 \gamma \cL_\infty (\Theta_n) + 4 \gamma ^2 \int_0^1 t \norm{ \cG ( \Theta_n ) } ^2 \d t \\
        &= - 8 \gamma \cL_\infty (\Theta_n) + 2\gamma ^2 \norm{ \cG ( \Theta_n ) } ^2.
    \end{split}
\end{equation}
The proof of \cref{lem:est:vtheta_n} is thus complete.
\end{proof}

\begin{cor} \label{cor:est:vtheta_n}
Assume \cref{setting:const}, let $\gamma \in (0, \infty)$, and let $\Theta = (\Theta_n)_{n \in \N_0}  \colon \N_0 \to \R^{3 \width + 1}$ satisfy for all $n \in \N_0$ that $\Theta_{n+1} = \Theta_n - \gamma \cG ( \Theta_n)$. Then it holds for all $n \in \N_0$ that
\begin{equation} 
      V(\Theta_{n+1}) -  V ( \Theta_n) \leq 8  \rbr*{  -  \gamma + \gamma ^2 ( 2 V(\Theta_n) + 1)  } \cL_\infty ( \Theta _ n) .
\end{equation}
\end{cor}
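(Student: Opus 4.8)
The plan is to combine the descent estimate from \cref{lem:est:vtheta_n} with the gradient bound from \cref{lem:gradient:est} and the comparison between $V$ and $\norm{\cdot}^2$ from \cref{prop:lyapunov:norm}. First I would invoke \cref{lem:est:vtheta_n}, which gives for every $n \in \N_0$ the inequality
\begin{equation}
    V(\Theta_{n+1}) - V(\Theta_n) \leq - 8 \gamma \cL_\infty(\Theta_n) + 2 \gamma^2 \norm{\cG(\Theta_n)}^2.
\end{equation}
Next I would control the term $\norm{\cG(\Theta_n)}^2$ using \cref{lem:gradient:est}, which yields $\norm{\cG(\Theta_n)}^2 \leq (8 \norm{\Theta_n}^2 + 4)\cL_\infty(\Theta_n)$. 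Substituting this into the displayed inequality produces
\begin{equation}
    V(\Theta_{n+1}) - V(\Theta_n) \leq - 8 \gamma \cL_\infty(\Theta_n) + 2 \gamma^2 (8 \norm{\Theta_n}^2 + 4)\cL_\infty(\Theta_n).
\end{equation}

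The remaining step is purely algebraic: I would factor out $8 \cL_\infty(\Theta_n)$ from the right-hand side and use the bound $\norm{\Theta_n}^2 \leq V(\Theta_n)$ from \cref{prop:lyapunov:norm}. Rewriting $2 \gamma^2 (8 \norm{\Theta_n}^2 + 4) = 8 \gamma^2 (2 \norm{\Theta_n}^2 + 1) \leq 8 \gamma^2 (2 V(\Theta_n) + 1)$, the estimate becomes
\begin{equation}
    V(\Theta_{n+1}) - V(\Theta_n) \leq 8\bigl(-\gamma + \gamma^2 (2 V(\Theta_n) + 1)\bigr)\cL_\infty(\Theta_n),
\end{equation}
which is exactly the claimed inequality. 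Since $\cL_\infty(\Theta_n) \geq 0$ and the sign of $-\gamma + \gamma^2(2V(\Theta_n)+1)$ is unconstrained here, no further case analysis is needed at this stage.

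I do not anticipate a genuine obstacle: every ingredient has already been established in the excerpt, and the proof is a one-line chaining of \cref{lem:est:vtheta_n}, \cref{lem:gradient:est}, and \cref{prop:lyapunov:norm} followed by elementary rearrangement. The only point requiring minor care is making sure the factor of $8$ is pulled out consistently and that the inequality $2\norm{\Theta_n}^2 + 1 \leq 2V(\Theta_n) + 1$ is applied with the correct direction, which is immediate from $\norm{\Theta_n}^2 \leq V(\Theta_n)$.
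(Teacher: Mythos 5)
Your proposal is correct and follows essentially the same route as the paper: both combine \cref{lem:est:vtheta_n} with the gradient bound of \cref{lem:gradient:est} and the inequality $\norm{\Theta_n}^2 \leq V(\Theta_n)$ from \cref{prop:lyapunov:norm}, then rearrange. The algebra $2\gamma^2(8\norm{\Theta_n}^2+4) = 8\gamma^2(2\norm{\Theta_n}^2+1)$ checks out, so nothing is missing.
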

\begin{proof} [Proof of \cref{cor:est:vtheta_n}]
\Nobs that \cref{lem:gradient:est} and \cref{prop:lyapunov:norm} imply for all $n \in \N_0$ that
\begin{equation} 
\begin{split}
    \norm{ \cG ( \Theta_n ) } ^2 
    &\leq ( 8 \norm{ \Theta_n } ^2 + 4) \cL_\infty (\Theta_n)
    = 4 ( 2 \norm{ \Theta_n } ^2 + 1 )  \cL_\infty ( \Theta_n ) \\
    &\leq 4  (2 V ( \Theta_n) + 1 ) \cL_\infty (\Theta_n).
    \end{split}
\end{equation}
Combining this and \cref{lem:est:vtheta_n} ensures that for all $n \in \N_0$ we have that
\begin{equation}
\begin{split}
      V(\Theta_{n+1}) -  V ( \Theta_n) &\leq - 8 \gamma \cL_\infty ( \Theta_n ) + 8 \gamma^2  ( 2 V ( \Theta_n) + 1 ) \cL_\infty ( \Theta_n ) \\
      &= 8  \rbr*{  -  \gamma + \gamma ^2 ( 2 V(\Theta_n) + 1)  } \cL_\infty ( \Theta _ n).
      \end{split}
\end{equation}
The proof of \cref{cor:est:vtheta_n} is thus complete.
\end{proof}

\begin{lemma} \label{lem:vthetan:decreasing} 
Assume \cref{setting:const}, let $\gamma \in (0, \infty)$,  and let $\Theta = (\Theta_n)_{n \in \N_0}  \colon \N_0 \to \R^{3 \width + 1}$ satisfy for all $n \in \N_0$ that $\Theta_{n+1} = \Theta_n - \gamma \cG ( \Theta_n)$ and  $\gamma \leq (4 V ( \Theta_0) + 2 )^{-1}$. Then it holds  for all $n \in \N_0$ that $V (\Theta_{n+1}) - V ( \Theta_n) \leq - 4 \gamma \cL_\infty (\Theta_n) \leq 0$.
\end{lemma}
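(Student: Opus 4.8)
The plan is to argue by induction on $n \in \N_0$, establishing simultaneously the a priori bound $V(\Theta_n) \leq V(\Theta_0)$ and the claimed descent inequality $V(\Theta_{n+1}) - V(\Theta_n) \leq -4\gamma\cL_\infty(\Theta_n) \leq 0$. The engine of the argument is \cref{cor:est:vtheta_n}, which already records that
\begin{equation*}
    V(\Theta_{n+1}) - V(\Theta_n) \leq 8\rbr*{-\gamma + \gamma^2\rbr*{2V(\Theta_n)+1}}\cL_\infty(\Theta_n).
\end{equation*}
The only thing missing is to control the factor $2V(\Theta_n)+1$ uniformly in $n$, which is exactly where the bound $V(\Theta_n)\leq V(\Theta_0)$ enters, together with the observation that the hypothesis $\gamma \leq (4V(\Theta_0)+2)^{-1}$ is equivalent to $\gamma\rbr*{2V(\Theta_0)+1} \leq \tfrac12$.

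For the base case $n=0$ the bound $V(\Theta_0)\leq V(\Theta_0)$ is trivial. For the induction step, assume $V(\Theta_n)\leq V(\Theta_0)$. Then $2V(\Theta_n)+1 \leq 2V(\Theta_0)+1$, and using $\gamma\rbr*{2V(\Theta_0)+1}\leq\tfrac12$ we obtain
\begin{equation*}
    -\gamma + \gamma^2\rbr*{2V(\Theta_n)+1} \leq -\gamma + \gamma^2\rbr*{2V(\Theta_0)+1} = \gamma\rbr*{-1 + \gamma\rbr*{2V(\Theta_0)+1}} \leq -\tfrac{\gamma}{2}.
\end{equation*}
Substituting this into the estimate from \cref{cor:est:vtheta_n} and using that $\cL_\infty(\Theta_n)\geq 0$ yields $V(\Theta_{n+1}) - V(\Theta_n) \leq -4\gamma\cL_\infty(\Theta_n) \leq 0$, which is the asserted inequality at index $n$; in particular $V(\Theta_{n+1}) \leq V(\Theta_n) \leq V(\Theta_0)$, which closes the induction and completes the proof.

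No serious obstacle arises beyond recognizing that the Lyapunov descent and the a priori bound must be proved in tandem: the clean contraction constant $-4\gamma$ in the conclusion degrades to the quadratic expression $8\rbr*{-\gamma+\gamma^2(2V(\Theta_n)+1)}$ of \cref{cor:est:vtheta_n} unless one already knows that $V(\Theta_n)$ has not grown past $V(\Theta_0)$, while the latter is itself a consequence of the descent inequality. The simultaneous induction resolves this circularity, and the smallness condition on $\gamma$ is precisely calibrated so that the quadratic correction term is absorbed into half of the linear term.
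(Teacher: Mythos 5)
Your proof is correct and follows essentially the same route as the paper: both arguments rest on \cref{cor:est:vtheta_n} and an induction in which the a priori bound $V(\Theta_n)\leq V(\Theta_0)$ and the descent inequality are established in tandem, with the hypothesis $\gamma\leq(4V(\Theta_0)+2)^{-1}$ used to absorb the quadratic term. The only cosmetic difference is that the paper phrases the induction hypothesis as the descent inequality for all earlier indices and deduces $V(\Theta_n)\leq V(\Theta_0)$ from it, whereas you carry the bound $V(\Theta_n)\leq V(\Theta_0)$ directly; the algebra is identical.
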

\begin{proof} [Proof of \cref{lem:vthetan:decreasing}]
We prove the statement by induction on $n \in \N_0$. \Nobs that \cref{cor:est:vtheta_n} implies that
\begin{equation}
\begin{split}
    V(\Theta_1) - V(\Theta_0) &\leq \rbr*{  - 8 \gamma + 8\gamma ^2 ( 2 V(\Theta_0) + 1 )  }  \cL_\infty ( \Theta _ 0) \\
    &\leq  \rbr*{  - 8 \gamma + 8 \gamma \br*{ \tfrac{2 V(\Theta_0) + 1}{4 V(\Theta_0) + 2} }  } \cL_\infty ( \Theta _ 0)  = - 4 \gamma \cL_\infty (\Theta_0) \leq 0.
    \end{split}
\end{equation}
This establishes the assertion in the base case $n=0$. For the induction step let $n \in \N$ satisfy for all $m \in \{0, 1, \ldots, n-1\}$ that 
\begin{equation} \label{eq:induction:1}
V( \Theta_{m + 1}) - V ( \Theta_{m} ) \leq - 4 \gamma \cL_\infty ( \Theta_{m} ) \leq 0.
\end{equation}
\Nobs that \eqref{eq:induction:1} shows that $V(\Theta_n) \leq V(\Theta_{n-1}) \leq \cdots  \leq V(\Theta_0)$. The assumption that $\gamma \leq (4 V ( \Theta_0 ) + 2 ) ^{-1}$ hence ensures that $\gamma \leq (4 V ( \Theta_0) + 2 )^{-1} \leq ( 4 V ( \Theta_n) + 2 )^{-1}$. Combining this and \cref{cor:est:vtheta_n} demonstrates that
\begin{equation}
    \begin{split}
    V(\Theta_{n+1}) - V(\Theta_n) &\leq \rbr*{  - 8 \gamma + 8\gamma ^2 ( 2 V(\Theta_n) + 1 )  }  \cL_\infty ( \Theta _ n) \\
    &\leq  \rbr*{  - 8 \gamma + 8 \gamma \br*{ \tfrac{2 V(\Theta_n) + 1}{4 V(\Theta_n) + 2} }  } \cL_\infty ( \Theta _ n)  = - 4 \gamma \cL_\infty (\Theta_n) \leq 0.
    \end{split}
\end{equation}
This completes the proof of \cref{lem:vthetan:decreasing}.
\end{proof}

\subsection{Convergence of the risks of gradient descent processes}
\label{subsection:theorem:gd}

\begin{theorem} \label{theo:gd:loss} 
Assume \cref{setting:const}, let $\gamma \in (0, \infty)$,  and let $\Theta = (\Theta_n)_{n \in \N_0}  \colon \N_0 \to \R^{3 \width + 1}$ satisfy for all $n \in \N_0$ that $\Theta_{n+1} = \Theta_n - \gamma \cG ( \Theta_n)$ and  $\gamma \leq (4 V ( \Theta_0) + 2 )^{-1}$. Then 
\begin{enumerate} [(i)] 
    \item \label{theo:gd:item1} it holds that $\sup_{n \in \N_0} \norm{ \Theta_n } \leq \br{ V( \Theta_0 ) } ^{1/2} < \infty$ and
    \item \label{theo:gd:item2} it holds that $\limsup_{n \to \infty} \cL_\infty (\Theta_n) = 0$.
\end{enumerate}
\end{theorem}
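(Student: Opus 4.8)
The plan is to read off both assertions from the discrete-time Lyapunov estimate already established in \cref{lem:vthetan:decreasing}. Under the standing hypothesis $\gamma \le (4 V(\Theta_0) + 2)^{-1}$, that lemma gives for every $n \in \N_0$ that
\begin{equation} \label{eq:proof:gdloss}
  V(\Theta_{n+1}) - V(\Theta_n) \le - 4 \gamma \, \cL_\infty(\Theta_n) \le 0 .
\end{equation}
Since $\cL_\infty \ge 0$, inequality \eqref{eq:proof:gdloss} shows in particular that the sequence $(V(\Theta_n))_{n \in \N_0}$ is non-increasing, so that $V(\Theta_n) \le V(\Theta_0)$ for all $n \in \N_0$.

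For item \eqref{theo:gd:item1} I would combine this monotonicity with the lower bound $\norm{\phi}^2 \le V(\phi)$ from \cref{prop:lyapunov:norm} to obtain $\norm{\Theta_n}^2 \le V(\Theta_n) \le V(\Theta_0)$ for all $n \in \N_0$; taking square roots and noting that $V(\Theta_0) < \infty$ since $\Theta_0 \in \R^{3 \width + 1}$ then yields the claim.

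For item \eqref{theo:gd:item2} I would telescope \eqref{eq:proof:gdloss}: for every $N \in \N_0$ it holds that
\begin{equation}
  4 \gamma \sum_{n=0}^{N} \cL_\infty(\Theta_n) \le \sum_{n=0}^{N} \bigl( V(\Theta_n) - V(\Theta_{n+1}) \bigr) = V(\Theta_0) - V(\Theta_{N+1}) \le V(\Theta_0),
\end{equation}
where the last step uses that $V \ge 0$. Letting $N \to \infty$ gives $\sum_{n=0}^{\infty} \cL_\infty(\Theta_n) \le (4 \gamma)^{-1} V(\Theta_0) < \infty$, whence $\lim_{n \to \infty} \cL_\infty(\Theta_n) = 0$ and in particular $\limsup_{n \to \infty} \cL_\infty(\Theta_n) = 0$.

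I do not expect any genuine obstacle at this stage: all of the substantive work has already been carried out upstream --- the identification of $\cG$ with $\nabla \cL_\infty$ wherever the latter exists (\cref{cor:loss:differentiable}), the gradient estimate $\norm{\cG(\phi)}^2 \le (8 \norm{\phi}^2 + 4) \cL_\infty(\phi)$ (\cref{lem:gradient:est}), the Lyapunov identity $\langle (\nabla V)(\phi), \cG(\phi) \rangle = 8 \cL_\infty(\phi)$ (\cref{prop:lyapunov:gradient}), and the inductive propagation of the step-size constraint along the whole trajectory that underlies \cref{lem:vthetan:decreasing}. The only mild subtlety is that the bound $\gamma \le (4 V(\Theta_0) + 2)^{-1}$ is phrased in terms of the initialization yet must control every step $n$; this is exactly what the induction inside \cref{lem:vthetan:decreasing} delivers, so the proof here simply harvests its consequences and telescopes.
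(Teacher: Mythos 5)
Your proposal is correct and follows essentially the same route as the paper's own proof: invoke \cref{lem:vthetan:decreasing} to get monotonicity of $V(\Theta_n)$ and the bound $\norm{\Theta_n} \leq [V(\Theta_n)]^{1/2} \leq [V(\Theta_0)]^{1/2}$ for item (i), then telescope the Lyapunov decrement to conclude $\sum_{n=0}^{\infty} \cL_\infty(\Theta_n) \leq (4\gamma)^{-1} V(\Theta_0) < \infty$ for item (ii). The only cosmetic difference is that you justify dropping $V(\Theta_{N+1})$ via $V \geq 0$ explicitly, which the paper does implicitly.
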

\begin{proof} [Proof of \cref{theo:gd:loss}]
\Nobs that \cref{lem:vthetan:decreasing} proves that for all $n \in \N_0$ we have that $V(\Theta_n ) \leq V ( \Theta_{n-1}) \leq \cdots \leq V(\Theta_0)$. This and the fact that $\forall \, n \in \N_0 \colon \norm{ \Theta_n } \leq \br{ V ( \Theta_n ) }^{1/2}$ establish item \eqref{theo:gd:item1}. Next \nobs that \cref{lem:vthetan:decreasing} implies for all ${N \in \N}$ that
\begin{equation}
      \sum_{n=0}^{N - 1} \rbr[\big]{ 4 \gamma \cL_\infty (\Theta_n ) } \leq \sum_{n = 0}^{N - 1} \rbr[\big]{  V(\Theta_{n}) - V(\Theta_{n+1}) } = V(\Theta_0) - V( \Theta_N) \leq V(\Theta_0).
\end{equation}
Hence, we have that
\begin{equation}
    \sum_{n=0}^\infty  \cL_\infty (\Theta_n) \leq \frac{V ( \Theta_0 )}{4 \gamma} < \infty.
\end{equation}
This shows that $\limsup_{n \to \infty} \cL_\infty ( \Theta_n ) = 0$.
The proof of \cref{theo:gd:loss} is thus complete.
\end{proof}

\begin{cor} \label{cor:gd:main}
Assume \cref{setting:const}, let $\gamma \in (0, \infty)$,  and let $\Theta = (\Theta_n)_{n \in \N_0}  \colon \N_0 \to \R^{3 \width + 1}$ satisfy for all $n \in \N_0$ that $\Theta_{n+1} = \Theta_n - \gamma \cG ( \Theta_n)$ and  $\gamma \leq  \br{ 12 \norm{ \Theta_0 } ^2 + 32 \alpha ^2 + 2 }^{-1}$. Then 
\begin{enumerate} [(i)] 
    \item it holds that $\sup_{n \in \N_0} \norm{ \Theta_n } \leq \br{ V ( \Theta_0 ) }^{1/2} < \infty$ and
    \item it holds that $\limsup_{n \to \infty} \cL_\infty (\Theta_n) = 0$.
\end{enumerate}
\end{cor}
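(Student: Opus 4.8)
The plan is to deduce \cref{cor:gd:main} directly from \cref{theo:gd:loss}, the only work being to check that the explicit learning-rate restriction $\gamma \leq \br{12 \norm{\Theta_0}^2 + 32 \alpha^2 + 2}^{-1}$ in the corollary is a (possibly strictly) stronger condition than the hypothesis $\gamma \leq (4 V(\Theta_0) + 2)^{-1}$ required in \cref{theo:gd:loss}. First I would apply \cref{prop:lyapunov:norm} with $\phi = \Theta_0$ to obtain $V(\Theta_0) \leq 3 \norm{\Theta_0}^2 + 8 \alpha^2$. Multiplying this estimate by $4$ and adding $2$ then yields $4 V(\Theta_0) + 2 \leq 12 \norm{\Theta_0}^2 + 32 \alpha^2 + 2$.

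Next I would observe that both sides of the previous display are bounded below by $2$, hence strictly positive, so passing to reciprocals reverses the inequality and gives $\br{12 \norm{\Theta_0}^2 + 32 \alpha^2 + 2}^{-1} \leq (4 V(\Theta_0) + 2)^{-1}$. Combining this with the standing assumption $\gamma \leq \br{12 \norm{\Theta_0}^2 + 32 \alpha^2 + 2}^{-1}$ from the corollary shows that $\gamma \leq (4 V(\Theta_0) + 2)^{-1}$. Since the corollary also assumes verbatim that $\gamma \in (0,\infty)$ and that $\Theta = (\Theta_n)_{n \in \N_0}$ satisfies $\Theta_{n+1} = \Theta_n - \gamma \cG(\Theta_n)$ for all $n \in \N_0$, every hypothesis of \cref{theo:gd:loss} is now verified, and items \eqref{theo:gd:item1} and \eqref{theo:gd:item2} of \cref{theo:gd:loss} yield precisely the two assertions of the corollary, namely $\sup_{n \in \N_0} \norm{\Theta_n} \leq \br{V(\Theta_0)}^{1/2} < \infty$ and $\limsup_{n \to \infty} \cL_\infty(\Theta_n) = 0$.

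I do not anticipate any genuine obstacle here: the statement is a bookkeeping corollary whose sole content is replacing the implicit threshold $(4 V(\Theta_0) + 2)^{-1}$ by the explicit one $\br{12 \norm{\Theta_0}^2 + 32 \alpha^2 + 2}^{-1}$ expressed purely in terms of $\norm{\Theta_0}$ and $\alpha$. The only point demanding a little care is tracking the direction of the inequality when inverting, which is harmless once one notes the uniform lower bound $2$ on both quantities involved.
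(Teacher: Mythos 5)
Your argument is correct and coincides with the paper's own proof: both derive $4 V(\Theta_0) + 2 \leq 12 \norm{\Theta_0}^2 + 32 \alpha^2 + 2$ from \cref{prop:lyapunov:norm}, conclude $\gamma \leq (4 V(\Theta_0) + 2)^{-1}$, and then invoke \cref{theo:gd:loss}. No issues.
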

\begin{proof} [Proof of \cref{cor:gd:main}]
\Nobs that \cref{prop:lyapunov:norm} proves that $ 4 V ( \Theta_0 ) + 2 \leq 12 \norm{ \Theta_0 } ^2 + 32 \alpha ^2 + 2  $. Hence, we have that $\gamma \leq (4 V ( \Theta_0) + 2 )^{-1}$. Combining this with \cref{theo:gd:loss} completes the proof of \cref{cor:gd:main}.
\end{proof}

\subsection{Gradient descent processes with random initializations}
\label{subsection:gd:random:initialization}

\begin{cor} \label{cor:gd:random}
Assume \cref{setting:const}, let $c, \gamma \in (0, \infty)$, let $(\Omega, \cF, \P)$ be a probability space, let $\Theta = ( \Theta_n)_{n \in \N_0} \colon \Omega \times \N_0 \to \R^{3 \width + 1}$ be a stochastic process, assume $\Theta_0(\Omega) \subseteq [-c ,c ]^{3 \width + 1 }$, assume for all $n \in \N_0$ that $\Theta_{n+1} = \Theta_n - \gamma \cG ( \Theta_n )$, and assume $\gamma \leq \br{12 c^2 (3 \width + 1) + 32 \alpha ^2 + 2}^{-1}$. Then
\begin{enumerate} [(i)]
\item \label{cor:gd:random:item1}
it holds that $\sup_{\omega \in \Omega} \sup_{n \in \N_0} \norm{ \Theta_n ( \omega)} \leq \br{3c^2 (3 \width + 1 ) + 8 \alpha ^2 } ^{1/2} < \infty$,
    \item \label{cor:gd:random:item2}
    it holds for all $\omega \in \Omega$ that $\limsup_{n \to \infty} \cL_\infty (\Theta_n ( \omega)) = 0$, and
    \item \label{cor:gd:random:item3}
    it holds that $\limsup_{n \to \infty} \E [ \cL_\infty (\Theta_n) ] = 0$.
\end{enumerate}
\end{cor}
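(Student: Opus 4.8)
The plan is to deduce the corollary from the deterministic statement \cref{theo:gd:loss} by fixing the randomness. So let $\omega \in \Omega$ be arbitrary and consider the deterministic sequence $(\Theta_n(\omega))_{n \in \N_0}$, which by assumption satisfies $\Theta_{n+1}(\omega) = \Theta_n(\omega) - \gamma \cG(\Theta_n(\omega))$ for all $n \in \N_0$. First I would verify that the learning-rate hypothesis of \cref{theo:gd:loss} holds along this path. Since $\Theta_0(\omega) \in [-c,c]^{3\width+1}$, every coordinate of $\Theta_0(\omega)$ has modulus at most $c$, so $\norm{\Theta_0(\omega)}^2 \le c^2(3\width+1)$. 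By \cref{prop:lyapunov:norm} this yields $V(\Theta_0(\omega)) \le 3\norm{\Theta_0(\omega)}^2 + 8\alpha^2 \le 3c^2(3\width+1) + 8\alpha^2$, and therefore $4V(\Theta_0(\omega)) + 2 \le 12c^2(3\width+1) + 32\alpha^2 + 2$. The hypothesis $\gamma \le (12c^2(3\width+1) + 32\alpha^2 + 2)^{-1}$ consequently gives $\gamma \le (4V(\Theta_0(\omega)) + 2)^{-1}$, so all hypotheses of \cref{theo:gd:loss} are satisfied by $(\Theta_n(\omega))_{n \in \N_0}$.

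Applying \cref{theo:gd:loss} to $(\Theta_n(\omega))_{n \in \N_0}$ then gives $\sup_{n \in \N_0} \norm{\Theta_n(\omega)} \le [V(\Theta_0(\omega))]^{1/2} \le [3c^2(3\width+1) + 8\alpha^2]^{1/2}$ and $\limsup_{n \to \infty} \cL_\infty(\Theta_n(\omega)) = 0$. The right-hand side of the first bound does not depend on $\omega$, so taking the supremum over $\omega \in \Omega$ establishes item \eqref{cor:gd:random:item1}, while the second assertion, holding for arbitrary $\omega$, is exactly item \eqref{cor:gd:random:item2}.

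For item \eqref{cor:gd:random:item3} the plan is a dominated-convergence argument. First I would record an $\omega$-independent bound: by \cref{lem:vthetan:decreasing} (using the learning-rate bound verified above) we have, for every $\omega \in \Omega$ and $n \in \N_0$, that $4\gamma \cL_\infty(\Theta_n(\omega)) \le V(\Theta_n(\omega)) - V(\Theta_{n+1}(\omega)) \le V(\Theta_n(\omega)) \le V(\Theta_0(\omega)) \le 3c^2(3\width+1) + 8\alpha^2$, so that $0 \le \cL_\infty(\Theta_n(\omega)) \le (4\gamma)^{-1}(3c^2(3\width+1) + 8\alpha^2)$ for all $n$ and all $\omega$. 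The maps $\omega \mapsto \cL_\infty(\Theta_n(\omega))$ are measurable because $\cL_\infty \colon \R^{3\width+1} \to \R$ is continuous and each $\Theta_n$ is measurable, and by item \eqref{cor:gd:random:item2} together with $\cL_\infty \ge 0$ they converge pointwise to $0$ as $n \to \infty$. Since they are uniformly dominated by a finite constant, the dominated convergence theorem gives $\lim_{n \to \infty} \E[\cL_\infty(\Theta_n)] = 0$, which is item \eqref{cor:gd:random:item3}.

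Overall this is a routine ``freeze $\omega$, then apply the pathwise theorem'' argument, and I do not expect a genuine obstacle. The only point that needs a little attention is item \eqref{cor:gd:random:item3}: one must exhibit a single, $\omega$-independent bound valid for all $n$ simultaneously in order to legitimately interchange limit and expectation, and this is precisely what the monotonicity of $V$ along the discrete gradient flow (\cref{lem:vthetan:decreasing}) supplies.
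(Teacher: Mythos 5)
Your proposal is correct and follows essentially the same route as the paper: verify $\gamma \le (4V(\Theta_0(\omega))+2)^{-1}$ pathwise via \cref{prop:lyapunov:norm}, apply \cref{theo:gd:loss} for each fixed $\omega$ to get items (i) and (ii), and conclude item (iii) by dominated convergence. The only cosmetic difference is that the paper obtains the dominating bound from the summability estimate $\sum_{n}\cL_\infty(\Theta_n(\omega)) \le V(\Theta_0(\omega))/(4\gamma)$ whereas you use the single-step telescoping bound from \cref{lem:vthetan:decreasing}; both yield the same uniform constant.
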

\begin{proof} [Proof of \cref{cor:gd:random}]
\Nobs that \cref{prop:lyapunov:norm} demonstrates for all $\phi \in [-c , c]^{3 \width + 1}$ that
\begin{equation} \label{eq:gd:random:1}
    V(\phi) \leq 3 \| \phi \| ^2 + 8 \alpha ^2 \leq 3 c^2 ( 3 \width + 1) + 8 \alpha ^2.
\end{equation}
Hence, we have for all $\phi \in [-c , c]^{3 \width + 1}$ that
\begin{equation}
    \gamma \leq  \br{12 c^2 (3 \width + 1) + 32 \alpha ^2 + 2}^{-1} \leq \br{4 V ( \phi ) + 2}^{-1}.
\end{equation}
This demonstrates for all $\omega \in \Omega$ that $\gamma \leq ( 4 V(\Theta_0 ( \omega)) + 2)^{-1}$. 
\cref{lem:vthetan:decreasing} and \eqref{eq:gd:random:1} hence prove that for all $\omega \in \Omega$, $n \in \N_0$ we have that $\norm{\Theta_n ( \omega)} \leq \br{ V ( \Theta_n ( \omega)) }^{1/2} \leq \br{V ( \Theta_0 ( \omega)) }^{1/2} \leq  \br{3c^2 ( 3 \width + 1 ) + 8 \alpha^2 }^{1/2}$. This establishes item \eqref{cor:gd:random:item1}. Next \nobs that \cref{theo:gd:loss} shows for all $\omega \in \Omega$ that $\limsup_{n \to \infty} \cL_\infty (\Theta_n ( \omega)) = 0 $, which proves item \eqref{cor:gd:random:item2}. Furthermore, \nobs that \cref{lem:vthetan:decreasing} assures that for all $\omega \in \Omega$, $N \in \N$ it holds that
\begin{equation}
    \sum_{n=0}^{N-1}  \rbr[\big]{ 4 \gamma \cL_\infty ( \Theta_n ( \omega )) } \leq \sum_{n=0}^{N-1} \rbr[\big]{V(\Theta_{n+1} ( \omega) ) - V ( \Theta_n ( \omega )) } \leq V ( \Theta_0 ( \omega )).
\end{equation}
\cref{prop:lyapunov:norm} hence shows that for all $\omega \in \Omega$ we have that
\begin{equation}
    \sum_{n=0}^\infty  \cL_\infty (\Theta_n ( \omega) ) \leq \frac{V ( \Theta_0 ( \omega ))}{4\gamma} \leq \frac{3 \norm{\Theta_0 ( \omega ) } ^2 + 8 \alpha ^2}{4 \gamma}.
\end{equation}
Combining this, item \eqref{cor:gd:random:item2}, and the dominated convergence theorem establishes item \eqref{cor:gd:random:item3}.
The proof of \cref{cor:gd:random} is thus complete.
\end{proof}

\section{A priori estimates for general target functions}
\label{section:apriori:gen}
The key ingredient in our convergence proofs for gradient flow and GD processes in Sections \ref{section:gradientflow} and \ref{section:gradientdescent} are suitable a priori estimates for the gradient flow processes (see \cref{lem:flow:lyapunov} in \cref{subsection:ito:lyapunov}) and the GD processes (see \cref{lem:vthetan:decreasing} in \cref{subsection:gd:lyapunov}). To initiate further research activities of this kind, we derive in this section related a priori bounds in the case of general target functions. For details we refer to \eqref{prop:gen:apriori:eq1} and \eqref{prop:gen:apriori:eq2} in \cref{prop:gen:apriori} below.

\begin{prop} \label{prop:gen:apriori} 
Let $\width \in \N$, $f \in C ( [0, 1 ], \R)$,
let $\fw = (( \w{\phi} _ 1 , \ldots, \w{\phi} _ \width ))_{ \phi \in \R^{3 \width + 1}} \colon \R^{3 \width + 1} \to \R^{\width}$,
$\fb =  (( \b{\phi} _ 1 , \ldots, \b{\phi} _ \width ))_{ \phi \in \R^{3 \width + 1}} \colon \R^{3 \width + 1} \to \R^{\width}$,
$\fv =  (( \v{\phi} _ 1 , \ldots, \v{\phi} _ \width ))_{ \phi \in \R^{3 \width + 1}} \colon \R^{3 \width + 1} \to \R^{\width}$, 
$\fc = (\c{\phi})_{\phi \in \R^{3 \width + 1 }} \colon \R^{3 \width + 1} \to \R$,
and $\norm{ \cdot} \colon \R^{3 \width + 1 } \to [0, \infty)$
 satisfy for all $\phi  = ( \phi_1 ,  \ldots, \phi_{3 \width + 1}) \in \R^{3 \width + 1}$, $j \in \{1, 2, \ldots, \width \}$ that
 $\w{\phi}_j = \phi_j$, $\b{\phi}_j = \phi_{\width + j}$, 
$\v{\phi}_j = \phi_{2\width + j}$, $\c{\phi} = \phi_{3 \width + 1}$,
and $\norm{ \phi } = [ \sum_{i=1}^{3 \width + 1 } | \phi_i | ^2 ] ^{ 1/2 }$,
let $\scrN = (\realization{\phi})_{\phi \in \R^{3 \width + 1}} \colon \R^{3 \width + 1 } \to  C(\R , \R)$ and $\cL \colon \R^{3 \width + 1 } \to \R$ satisfy for all $\phi \in \R^{3 \width + 1}$, $x \in \R$ that $\realization{\phi} (x) = \c{\phi} + \sum_{j=1}^\width \v{\phi}_j \max \{ \w{\phi}_j x + \b{\phi}_j , 0 \}$ and $\cL (\phi) = \int_0^1 (\realization{\phi} (y) - f ( y ) )^2 \d y$,
let $V \colon \R^{3 \width + 1 } \to \R$ and $\cG = (\cG_1 , \ldots, \cG_{3 \width +1} ) \colon \R^{3 \width + 1 } \to \R^{3 \width + 1}$ satisfy for all $\phi \in \R^{3 \width + 1}$, $j \in \{1,2, \ldots, \width \}$ that $V ( \phi ) = \norm{\phi} ^2 + \abs{ \c{\phi} } ^2$ and
\begin{equation} \label{eq:defgradient:gen} 
\begin{split}
        \cG_j ( \phi) &= 2\v{\phi}_j \int_{0}^1 x ( \realization{\phi} (x) - f ( x )) \indicator{(0, \infty )} ( \w{\phi}_j x + \b{\phi}_j) \d x, \\
        \cG_{\width + j} ( \phi) &= 2 \v{\phi}_j \int_{0}^1 (\realization{\phi} (x) - f ( x ) ) \indicator{(0, \infty )} ( \w{\phi}_j x + \b{\phi}_j) \d x, \\
        \cG_{2 \width + j} ( \phi) &= 2 \int_0^1 [\max \{ \w{\phi}_j x + \b{\phi}_j , 0 \}] ( \realization{\phi}(x) - f ( x )) \d x, \\
        \cG_{3 \width + 1} ( \phi) &= 2 \int_0^1 (\realization{\phi} (x) - f ( x ) ) \d x,
        \end{split}
\end{equation}
and let $\Theta \in C([0, \infty) , \R^{3 \width + 1})$ satisfy for all $t \in [0, \infty)$ that $\Theta_t = \Theta_0 - \int_0^t \cG ( \Theta_s ) \d s$.
Then 
\begin{enumerate} [(i)]
    \item \label{prop:apriori:item1} it holds for all $t \in [ 0, \infty)$ that
\begin{equation} \label{prop:gen:apriori:eq1}
V( \Theta_t ) = V ( \Theta_0 ) - 8 \int_0^t \int_0^1 \realization{\Theta_s} ( x ) ( \realization{\Theta_s} ( x ) - f ( x ) ) \d x \d s \leq V ( \Theta_0 ) + 2 t \int_0^1 \abs{ f(x) } ^2 \d x
\end{equation}
and
\item \label{prop:apriori:item2} it holds for all $t \in [0, \infty)$ that
\begin{equation} \label{prop:gen:apriori:eq2}
    \norm{ \Theta_t } \leq ( V ( \Theta_0 ) )^{1/2} + \br*{2 \textstyle\int_0^1 \abs{  f (x) } ^2 \d x}^{1/2}  t^{1/2} .
\end{equation}
\end{enumerate}
\end{prop}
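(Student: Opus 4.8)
The plan is to transcribe the Lyapunov computation behind \cref{lem:flow:lyapunov} to the present, more general, setting, the only changes being that the modified Lyapunov function $V(\phi)=\norm{\phi}^2+\abs{\c{\phi}}^2$ replaces the one from \cref{setting:const} and that the target function $f$ replaces the constant $\alpha$. Throughout, $\cG$ is the explicitly given vector field in \eqref{eq:defgradient:gen}, so no differentiability discussion of $\cL$ is required.

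First I would record that $(\nabla V)(\varphi)=2\varphi+(0,\dots,0,2\c{\varphi})$ for every $\varphi\in\R^{3\width+1}$, so that $\langle(\nabla V)(\phi),\cG(\phi)\rangle=2\langle\phi,\cG(\phi)\rangle+2\c{\phi}\,\cG_{3\width+1}(\phi)$. Substituting the components from \eqref{eq:defgradient:gen}, using the identity $(\w{\phi}_j x+\b{\phi}_j)\indicator{(0,\infty)}(\w{\phi}_j x+\b{\phi}_j)=\max\{\w{\phi}_j x+\b{\phi}_j,0\}$ to merge the weight and bias contributions, and collecting the integrands over $[0,1]$ so that $\c{\phi}+\sum_{j=1}^\width\v{\phi}_j\max\{\w{\phi}_j x+\b{\phi}_j,0\}$ reassembles to $\realization{\phi}(x)$, I expect to obtain
\begin{equation*}
\langle(\nabla V)(\phi),\cG(\phi)\rangle=8\int_0^1\realization{\phi}(x)\,(\realization{\phi}(x)-f(x))\d x
\end{equation*}
for all $\phi\in\R^{3\width+1}$; this is the exact analogue of item \eqref{prop:lyapunov:gradient:item3} in \cref{prop:lyapunov:gradient}, and it may equivalently be deduced by splitting $V=\cV_1+\cV_2$ with $\cV_1(\phi)=(\c{\phi})^2+\sum_{j=1}^\width(\v{\phi}_j)^2$ and $\cV_2(\phi)=(\c{\phi})^2+\sum_{j=1}^\width[(\w{\phi}_j)^2+(\b{\phi}_j)^2]$ exactly as in the proof of \cref{prop:lyapunov:gradient}.

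Next I would apply the deterministic chain rule \cref{lem:chainrule:gen} to $F=V\in C^\infty(\R^{3\width+1},\R)$ along the curve $\Theta$ with $\vartheta_s=-\cG(\Theta_s)$. To invoke that lemma one must check, for each $T\in(0,\infty)$, that $[0,T]\ni s\mapsto\cG(\Theta_s)\in\R^{3\width+1}$ is bounded and measurable: boundedness follows directly from \eqref{eq:defgradient:gen} together with the pointwise estimate $\abs{\realization{\phi}(x)}\le\abs{\c{\phi}}+\sum_{j=1}^\width\abs{\v{\phi}_j}(\abs{\w{\phi}_j}+\abs{\b{\phi}_j})$ for $x\in[0,1]$ and the compactness of $\Theta([0,T])$, while measurability holds since $\cG$ is Borel measurable and $\Theta$ is continuous. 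Combining \cref{lem:chainrule:gen} with the identity of the previous paragraph then yields, for all $t\in[0,\infty)$,
\begin{equation*}
V(\Theta_t)=V(\Theta_0)-\int_0^t\langle(\nabla V)(\Theta_s),\cG(\Theta_s)\rangle\d s=V(\Theta_0)-8\int_0^t\int_0^1\realization{\Theta_s}(x)\,(\realization{\Theta_s}(x)-f(x))\d x\d s,
\end{equation*}
which is the equality in \eqref{prop:gen:apriori:eq1}.

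For the inequality in \eqref{prop:gen:apriori:eq1} I would use the pointwise bound $-a\,(a-b)=-(a-\tfrac12 b)^2+\tfrac14 b^2\le\tfrac14 b^2$ with $a=\realization{\Theta_s}(x)$ and $b=f(x)$; integrating over $x\in[0,1]$ and $s\in[0,t]$ converts the displayed equality into $V(\Theta_t)\le V(\Theta_0)+2t\int_0^1\abs{f(x)}^2\d x$, which is item \eqref{prop:apriori:item1}. Item \eqref{prop:apriori:item2} then follows because $\norm{\phi}^2\le V(\phi)$ for every $\phi$, so $\norm{\Theta_t}\le(V(\Theta_t))^{1/2}\le(V(\Theta_0)+2t\,\textstyle\int_0^1\abs{f(x)}^2\d x)^{1/2}$, and subadditivity of the square root, $\sqrt{u+v}\le\sqrt u+\sqrt v$, separates the right-hand side into the asserted sum. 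The only point requiring attention beyond routine transcription is the verification of the hypotheses of \cref{lem:chainrule:gen}: unlike in \cref{lem:flow:lyapunov}, no previously established analogue of \cref{cor:g:bounded} is available in this generality, so local boundedness of $\cG$ has to be extracted by hand from the defining formulas \eqref{eq:defgradient:gen}.
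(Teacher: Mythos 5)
Your proposal is correct and follows essentially the same route as the paper's proof: the same computation of $\langle(\nabla V)(\phi),\cG(\phi)\rangle=8\int_0^1\realization{\phi}(x)(\realization{\phi}(x)-f(x))\d x$, the same application of \cref{lem:chainrule:gen}, the same pointwise bound $a(a-b)\ge-\tfrac14 b^2$, and the same use of $\norm{\phi}^2\le V(\phi)$ with subadditivity of the square root. Your explicit verification of the boundedness hypothesis of \cref{lem:chainrule:gen} in this general setting is a small point of added care that the paper leaves implicit, but it does not change the argument.
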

\begin{proof}[Proof of \cref{prop:gen:apriori}]
Throughout this proof let $\langle \cdot , \cdot \rangle \colon \R^{3 \width + 1 } \times \R^{3 \width + 1 } \to \R$ satisfy for all $\phi = ( \phi_1 , \ldots , \phi_{3 \width + 1 })$, $\psi = ( \psi_1 , \ldots , \psi_{3 \width + 1 } ) \in \R^{3 \width + 1 }$ that $\langle \phi , \psi \rangle = \sum_{i=1}^{3 \width + 1 } \phi_i \psi_i$.
\Nobs that for all $\phi \in \R^{3 \width + 1}$ it holds that
\begin{equation}
    (\nabla V) ( \phi ) = 2 \rbr[\big]{  \w{\phi}_1, \w{\phi}_2 , \ldots, \w{\phi}_\width , \b{\phi}_1 , \b{\phi}_2 , \ldots, \b{\phi}_\width , \v{\phi}_1, \v{\phi}_2, \ldots, \v{\phi}_{\width},  2 \c{\phi}  } .
\end{equation} 
This implies for all $\phi \in \R^{3 \width + 1}$ that
\begin{equation} \label{apriori:gen:eq1}
    \begin{split}
        &\langle ( \nabla V ) ( \phi) , \cG(\phi) \rangle \\
        &= 4 \br[\Bigg]{ \sum_{j=1}^\width \v{\phi}_j \int_0^1 [\max \{ \w{\phi}_j x + \b{\phi}_j , 0 \}] ( \realization{\phi} (x) - f (x) ) \d x }
        + 8 \c{\phi} \br*{ \int_0^1 (\realization{\phi} (x) - f (x) ) \d x } \\
        &+ 4 \br[\Bigg]{  \sum_{j=1}^\width \v{\phi}_j \int_{0}^1 (\w{\phi}_j x + \b{\phi}_j) (\realization{\phi} (x) - f (x)) \indicator{(0, \infty )} ( \w{\phi}_j x + \b{\phi}_j) \d x } \\
        & = 8 \br*{\int_0^1 \rbr*{  \smallsum_{j=1}^\width \v{\phi}_j [ \max \{ \w{\phi}_j  x + \b{\phi}_j , 0 \} ]  }(\realization{\phi} ( x ) -  f (x) ) \d x } 
        + 8 \c{\phi} \br*{ \int_0^1 (\realization{\phi} (x) - f (x) ) \d x } \\
        &= 8 \int_0^1 \realization{\phi}(x) (\realization{\phi} (x) - f (x) ) \d x.
    \end{split}
\end{equation}
Next \nobs that the fact that for all $x , y \in \R$ it holds that $x ( x - y ) =  (x - \frac{y}{2})^2 -\frac{1}{4} y ^2 \geq -\frac{1}{4} y ^2$ ensures that for all $x \in [0,1]$, $\phi \in \R^{3 \width + 1}$ it holds that $\realization{\phi} (x) (\realization{\phi} (x) - f (x) ) \geq - \frac{1}{4} (f (x))^2$. Hence, we have for all $\phi \in \R^{3 \width + 1}$ that
\begin{equation}
      \langle ( \nabla V ) ( \phi) , \cG(\phi) \rangle \geq - 2 \int_0^1 | f (x) | ^2 \d x.
\end{equation}
This, \eqref{apriori:gen:eq1}, the fact that $V \in C^\infty ( \R^{3 \width + 1 }, \R)$, and \cref{lem:chainrule:gen} shows for all $t \in [ 0, \infty)$ that
\begin{equation}
\begin{split}
    V ( \Theta_t ) - V ( \Theta_0 ) 
    &= - \int_0^t \langle ( \nabla V ) ( \Theta_s ) , \cG ( \Theta_s) \rangle \d s \\
    &= - 8 \int_0^t \int_0^1 \realization{\Theta_s} ( x ) ( \realization{\Theta_s} ( x ) - f ( x ) ) \d x \d s \\
    &\leq 2 \int_0^t \int_0^1 | f (x) | ^2 \d x \d s 
    = 2 t \int_0^1 | f (x) | ^2 \d x.
    \end{split}
\end{equation}
This proves item \eqref{prop:apriori:item1}. Next \nobs that item \eqref{prop:apriori:item1} and the fact that $\forall \, \phi \in \R^{3 \width + 1 } \colon \norm{ \phi } ^2 \leq V ( \phi)$ demonstrate that for all $t \in [0, \infty)$ it holds that
\begin{equation}
    \norm{ \Theta_t } \leq ( V ( \Theta_t ) )^{1/2} \leq \br*{ V ( \Theta_0 ) + 2 t \textstyle\int_0^1 \abs{  f (x) } ^2 \d x}^{1/2}.
\end{equation}
Combining this and the fact that $\forall \, x , y \in [0, \infty) \colon (x + y )^{1/2} \leq x ^{1/2} + y ^{1/2}$ ensures that for all $t \in [0, \infty)$ we have that 
\begin{equation}
    \norm{ \Theta_t } \leq (V ( \Theta_0 ) )^{1/2} +  \br*{2 \textstyle\int_0^1 \abs{  f (x) } ^2 \d x}^{1/2}   t^{1/2}.
\end{equation}
This establishes item \eqref{prop:apriori:item2}.
The proof of \cref{prop:gen:apriori} is thus complete.
\end{proof}

\subsection*{Acknowledgments}
Benno Kuckuck is gratefully acknowledged for several helpful suggestions.
This work has been funded by the Deutsche Forschungsgemeinschaft (DFG, German Research Foundation) under Germany’s Excellence Strategy EXC 2044-390685587, Mathematics Münster: Dynamics-Geometry-Structure.


\begin{thebibliography}{10}

\bibitem{AkyildizSabanis2021}
{\sc Akyildiz, {\"O}.~D., and Sabanis, S.}
\newblock Nonasymptotic analysis of {S}tochastic {G}radient {H}amiltonian
  {M}onte {C}arlo under local conditions for nonconvex optimization.
\newblock {\em arXiv:2002.05465\/} (2021), 26 pages.

\bibitem{AllenzhuLiLiang2019}
{\sc Allen-Zhu, Z., Li, Y., and Liang, Y.}
\newblock Learning and generalization in overparameterized neural networks,
  going beyond two layers.
\newblock In {\em Advances in Neural Information Processing Systems\/} (2019),
  H.~Wallach, H.~Larochelle, A.~Beygelzimer, F.~d\textquotesingle
  Alch\'{e}-Buc, E.~Fox, and R.~Garnett, Eds., vol.~32, Curran Associates,
  Inc., pp.~6158--6169.

\bibitem{AllenzhuLiSong2019}
{\sc Allen-Zhu, Z., Li, Y., and Song, Z.}
\newblock A convergence theory for deep learning via over-parameterization.
\newblock In {\em Proceedings of the 36th International Conference on Machine
  Learning\/} (09--15 Jun 2019), K.~Chaudhuri and R.~Salakhutdinov, Eds.,
  vol.~97 of {\em Proceedings of Machine Learning Research}, PMLR,
  pp.~242--252.

\bibitem{Bach2017}
{\sc Bach, F.}
\newblock Breaking the curse of dimensionality with convex neural networks.
\newblock {\em Journal of Machine Learning Research 18}, 19 (2017), 1--53.

\bibitem{BachMoulines2013}
{\sc Bach, F., and Moulines, E.}
\newblock Non-strongly-convex smooth stochastic approximation with convergence
  rate {$O(1/n)$}.
\newblock In {\em Advances in Neural Information Processing Systems\/} (2013),
  C.~J.~C. Burges, L.~Bottou, M.~Welling, Z.~Ghahramani, and K.~Q. Weinberger,
  Eds., vol.~26, Curran Associates, Inc., pp.~773--781.

\bibitem{CheriditoJentzenRossmannek2020}
{\sc Cheridito, P., Jentzen, A., and Rossmannek, F.}
\newblock Non-convergence of stochastic gradient descent in the training of
  deep neural networks.
\newblock {\em Journal of Complexity\/} (2020), 101540.

\bibitem{ChizatBach2018}
{\sc Chizat, L., and Bach, F.}
\newblock On the global convergence of gradient descent for over-parameterized
  models using optimal transport.
\newblock In {\em Advances in Neural Information Processing Systems\/} (2018),
  S.~Bengio, H.~Wallach, H.~Larochelle, K.~Grauman, N.~Cesa-Bianchi, and
  R.~Garnett, Eds., vol.~31, Curran Associates, Inc., pp.~3036--3046.

\bibitem{DuLeeLiWangZhai2019}
{\sc Du, S., Lee, J., Li, H., Wang, L., and Zhai, X.}
\newblock Gradient descent finds global minima of deep neural networks.
\newblock In {\em Proceedings of the 36th International Conference on Machine
  Learning\/} (Long Beach, California, USA, 6 2019), K.~Chaudhuri and
  R.~Salakhutdinov, Eds., vol.~97 of {\em Proceedings of Machine Learning
  Research}, PMLR, pp.~1675--1685.

\bibitem{DuZhaiPoczosSingh2018arXiv}
{\sc Du, S.~S., Zhai, X., Pocz\'{o}s, B., and Singh, A.}
\newblock Gradient descent provably optimizes over-parameterized neural
  networks.
\newblock {\em arXiv:1810.02054\/} (2018), 19 pages.

\bibitem{EMaWu2020}
{\sc E, W., Ma, C., and Wu, L.}
\newblock A comparative analysis of optimization and generalization properties
  of two-layer neural network and random feature models under gradient descent
  dynamics.
\newblock {\em Science China Mathematics 63}, 7 (2020), 1235--1258.

\bibitem{FehrmanGessJentzen2020}
{\sc Fehrman, B., Gess, B., and Jentzen, A.}
\newblock Convergence rates for the stochastic gradient descent method for
  non-convex objective functions.
\newblock {\em Journal of Machine Learning Research 21}, 136 (2020), 1--48.

\bibitem{Hanin2018}
{\sc Hanin, B.}
\newblock Which neural net architectures give rise to exploding and vanishing
  gradients?
\newblock In {\em Advances in Neural Information Processing Systems\/} (2018),
  S.~Bengio, H.~Wallach, H.~Larochelle, K.~Grauman, N.~Cesa-Bianchi, and
  R.~Garnett, Eds., vol.~31, Curran Associates, Inc., pp.~582--591.

\bibitem{HaninRolnick2018}
{\sc Hanin, B., and Rolnick, D.}
\newblock How to start training: The effect of initialization and architecture.
\newblock In {\em Advances in Neural Information Processing Systems\/} (2018),
  S.~Bengio, H.~Wallach, H.~Larochelle, K.~Grauman, N.~Cesa-Bianchi, and
  R.~Garnett, Eds., vol.~31, Curran Associates, Inc., pp.~571--581.

\bibitem{JacotGabrielHongler2020}
{\sc Jacot, A., Gabriel, F., and Hongler, C.}
\newblock Neural tangent kernel: Convergence and generalization in neural
  networks.
\newblock In {\em Advances in Neural Information Processing Systems\/} (2018),
  S.~Bengio, H.~Wallach, H.~Larochelle, K.~Grauman, N.~Cesa-Bianchi, and
  R.~Garnett, Eds., vol.~31, Curran Associates, Inc., pp.~8571--8580.

\bibitem{JentzenKuckuckNeufeldVonWurstemberger2021}
{\sc Jentzen, A., Kuckuck, B., Neufeld, A., and von Wurstemberger, P.}
\newblock Strong error analysis for stochastic gradient descent optimization
  algorithms.
\newblock {\em IMA Journal of Numerical Analysis 41}, 1 (2021), 455--492.

\bibitem{JentzenvonWurstemberger2020}
{\sc Jentzen, A., and {von Wurstemberger}, P.}
\newblock Lower error bounds for the stochastic gradient descent optimization
  algorithm: Sharp convergence rates for slowly and fast decaying learning
  rates.
\newblock {\em Journal of Complexity 57\/} (2020), 101438.

\bibitem{LeiHuLiTang2020}
{\sc {Lei}, Y., {Hu}, T., {Li}, G., and {Tang}, K.}
\newblock Stochastic gradient descent for nonconvex learning without bounded
  gradient assumptions.
\newblock {\em IEEE Transactions on Neural Networks and Learning Systems 31},
  10 (2020), 4394--4400.

\bibitem{LiLiang2019}
{\sc Li, Y., and Liang, Y.}
\newblock Learning overparameterized neural networks via stochastic gradient
  descent on structured data.
\newblock In {\em Advances in Neural Information Processing Systems\/} (2018),
  S.~Bengio, H.~Wallach, H.~Larochelle, K.~Grauman, N.~Cesa-Bianchi, and
  R.~Garnett, Eds., vol.~31, Curran Associates, Inc., pp.~8157--8166.

\bibitem{LovasSabanis2020}
{\sc Lovas, A., Lytras, I., Rásonyi, M., and Sabanis, S.}
\newblock Taming neural networks with {TUSLA}: Non-convex learning via adaptive
  stochastic gradient {L}angevin algorithms.
\newblock {\em arXiv:2006.14514\/} (2020), 29 pages.

\bibitem{LuShinSuKarniadakis2020}
{\sc Lu, L., Shin, Y., Su, Y., and Karniadakis, G.~E.}
\newblock Dying {R}e{LU} and initialization: Theory and numerical examples.
\newblock {\em Communications in Computational Physics 28}, 5 (2020),
  1671--1706.

\bibitem{BachMoulines2011}
{\sc Moulines, E., and Bach, F.}
\newblock Non-asymptotic analysis of stochastic approximation algorithms for
  machine learning.
\newblock In {\em Advances in Neural Information Processing Systems\/} (2011),
  J.~Shawe-Taylor, R.~Zemel, P.~Bartlett, F.~Pereira, and K.~Q. Weinberger,
  Eds., vol.~24, Curran Associates, Inc., pp.~451--459.

\bibitem{Ruder2017overview}
{\sc Ruder, S.}
\newblock An overview of gradient descent optimization algorithms.
\newblock {\em arXiv:1609.04747\/} (2017), 14 pages.

\bibitem{SankararamanDeXuHuangGoldstein2020}
{\sc Sankararaman, K.~A., De, S., Xu, Z., Huang, W.~R., and Goldstein, T.}
\newblock The impact of neural network overparameterization on gradient
  confusion and stochastic gradient descent.
\newblock {\em arXiv:1904.06963\/} (2020), 28 pages.

\bibitem{ShinKarniadakis2020}
{\sc Shin, Y., and Karniadakis, G.~E.}
\newblock Trainability of {R}e{LU} networks and data-dependent initialization.
\newblock {\em Journal of Machine Learning for Modeling and Computing 1}, 1
  (2020), 39--74.

\bibitem{EChaoWu2018}
{\sc Wu, L., Ma, C., and E, W.}
\newblock How {SGD} selects the global minima in over-parameterized learning: A
  dynamical stability perspective.
\newblock In {\em Advances in Neural Information Processing Systems\/} (2018),
  S.~Bengio, H.~Wallach, H.~Larochelle, K.~Grauman, N.~Cesa-Bianchi, and
  R.~Garnett, Eds., vol.~31, Curran Associates, Inc., pp.~8279--8288.

\bibitem{ZouCaoZhouGu2019}
{\sc Zou, D., Cao, Y., Zhou, D., and Gu, Q.}
\newblock Gradient descent optimizes over-parameterized deep {R}e{LU} networks.
\newblock {\em Machine Learning 109\/} (2020), 467--492.

\end{thebibliography}
\end{document}